\newcommand{\prs}{\langle\;,\;\rangle}
\newcommand{\too}{\longrightarrow}
\newcommand{\om}{\omega}
\newcommand{\esp}{\quad\mbox{and}\quad}
\def\br{[\;,\;]}
\newcommand{\X}{{\cal X}}
\newcommand{\G}{{\mathfrak{g}}}
\newcommand{\tr}{{\mathrm{tr}}}
\newcommand{\D}{{\cal D}}
\newcommand{\di}{\displaystyle}
\newcommand{\Om}{\Omega}
\newcommand{\na}{\nabla}
\newcommand{\al}{\alpha}
\newcommand{\be}{\beta}
\newcommand{\ga}{\gamma}
\newcommand{\Ga}{\Gamma}
\newcommand{\e}{\epsilon}
\newcommand{\De}{\Delta}
\font\bb=msbm10
\def\R{\hbox{\bb R}}
\def\S{\hbox{\bb S}}
\def\C{\hbox{\bb C}}
\newtheorem{Def}{Definition}[section]
\newtheorem{theo}{Theorem}[section]
\newtheorem{pr}{Proposition}[section]
\newtheorem{Le}{Lemma}[section]
\newtheorem{co}{Corollary}[section]
\newtheorem{exem}{Example}
\newtheorem{remark}{Remark}
\newtheorem{problem}{Problem}
\begin{document}

\begin{frontmatter}


 
 \fntext[label3]{This research was conducted within the framework of Action concert\'ee CNRST-CNRS Project SPM04/13.}

\title{On para-K\"ahler  Lie algebroids and generalized pseudo-Hessian structures}

 \author[label1,label2]{Sa\"id Benayadi, Mohamed Boucetta}
 \address[label1]{Universit\'e de Lorraine, Laboratoire IECL, CNRS-UMR 7502,\\ Ile du Saulcy, F-57045 Metz
 cedex
 1, France.\\e-mail: said.benayadi@univ-lorraine.fr
 }
 \address[label2]{Universit\'e Cadi-Ayyad\\
 Facult\'e des sciences et techniques\\
 BP 549 Marrakech Maroc\\e-mail: m.boucetta@uca.ac.ma
 }



\begin{abstract}In this paper, we generalize all the results obtained on para-K\"ahler Lie algebras in \cite{bouben} to para-K\"ahler Lie algebroids. In particular, we study exact para-K\"ahler Lie algebroids as a generalization of exact para-K\"ahler Lie algebras. This study leads to a natural generalization of pseudo-Hessian manifolds. Generalized pseudo-Hessian manifolds have many similarities with Poisson manifolds. We explore these similarities which, among others, leads to  a powerful machinery to build examples of non trivial   pseudo-Hessian structures. Namely, we will show that given a finite dimensional commutative and associative algebra $(\mathcal{A},.)$, the orbits of the action $\Phi$ of $(\mathcal{A},+)$ on $\mathcal{A}^*$ given by $\Phi(a,\mu)=\exp(L_a^*)(\mu)$ are pseudo-Hessian manifolds, where $L_a(b)=a.b$.  We illustrate this result by considering many examples of associative commutative algebras an show that the pseudo-Hessian manifolds obtained are very interesting.
\end{abstract}

\begin{keyword}para-K\"ahler Lie algebroids \sep  symplectic Lie
algebroids \sep left symmetric algebroids \sep pseudo-Hessian manifolds \sep Associative commutative algebras
\MSC 53C15 \sep \MSC 53A15 \sep \MSC 53D17 \sep \MSC 13P25


\end{keyword}

\end{frontmatter}






\section{Introduction}\label{section1}

Recall that a Lie algebroid is a vector bundle $A\too M$ together with an anchor map $\rho:A\too TM$ and a Lie bracket $[\;,\;]_A$ on $\Ga(A)$ such that, for any $a,b\in \Ga(A)$, $f\in C^\infty(M)$,
\[ [a,fb]_A=f[a,b]_A+\rho(a)(f)b. \]
Lie algebroids are now a central notion in
differential geometry and constitute an active domain of research.
They have many applications in various part of mathematics and physics (see
for instance \cite{can,car,cor,mac}). It is a well-established fact that many classical geometrical structures involving the tangent bundle of a manifold (which has a natural structure of Lie algebroid) can be generalized to the context of Lie algebroids. Thus the notions of connections on Lie algebroids, symplectic Lie algebroids, pseudo-Riemannian Lie algebroids and so on are now usual notions in differential geometry with many applications in physics (see for instance \cite{blu, elyasi}). On the other hand, it is important to point out that Lie algebroids generalize also Lie algebras and, for instance, if one obtains a result on  the curvature of pseudo-Riemannian Lie algebroids this result holds for the curvature of pseudo-Euclidean Lie algebras and hence  for the curvature of left invariant pseudo-Riemannian metrics on Lie groups. 

In this paper, we study para-K\"ahler Lie algebroids as a generalization of both para-K\"ahler manifolds and left invariant para-K\"ahler structures on Lie groups.  A
\emph{para-K\"ahler} structure on a manifold $M$ is a pair $(g,K)$ where $g$ is a
pseudo-Riemannian metric and $K$ is a parallel (with respect to the Levi-Civita connection of $g$) skew-symmetric endomorphism field satisfying $K^2=Id_{TM}$. The
paper \cite{cru} contains a survey on  para-K\"ahler geometry and contains many references. When the manifold is a Lie group
$G$,  the metric and the para-complex structure are considered left-invariant, they
are both determined by their restrictions to the Lie algebra $\G$ of $G$.  In a such
situation, $(\G,g_e,K_e)$ is called   \emph{para-K\"ahler  Lie
	algebra}. A para-K\"ahler Lie algebroid is a Lie algebroid $(A,M,\rho)$ together with a pseudo-Euclidean product $\prs$  on $A$ and a bundle isomorphism $K:A\too A$ such that $K^2=\mathrm{Id}_A$, $K$ is skew-symmetric with respect to $\prs$ and $\na K=0$ where $\na$ is the Levi-Civita connection associated to $\prs$ and given by the formula
\begin{eqnarray*}
	2\langle\na_ab,c\rangle&=&{\rho}(a).\langle b,c\rangle+{\rho}(b).\langle a,c\rangle-
	{\rho}(c).\langle a,b\rangle
	+\langle[c,a]_A,b\rangle+\langle[c,b]_A,a\rangle+\langle[a,b]_A,c\rangle,\quad a,b,c\in\Ga(A).\end{eqnarray*}
The authors realized a complete study of para-K\"ahler  Lie
algebras in \cite{bouben} and, our first motivation at the origin of the present paper,  was to generalize the result obtained in this study  to the context of para-K\"ahler Lie algebroids. This has been done successfully and constitutes the first part of this paper. The generalization was not straightforward and many new phenomenas appeared due to the anchor. Moreover, as it happens always in mathematics, during our investigations when studying a special class of para-K\"ahler Lie algebroids, we came  across a new structure which turned out to be a natural generalization of the notion of pseudo-Hessian manifolds. This new notion and some of its remarkable properties constitute the second part of this paper. Recall that a pseudo-Hessian manifold is a locally affine manifold $(M,D)$ endowed with a pseudo-Riemannian metric such that $g$ is locally given by $D d\phi$  where $\phi$ is a function. This is equivalent to $S=Dg$ is totally symmetric. Pseudo-Hessian geometry is an active domain of research which has many applications in economic theory, in system modeling and optimization as well as in statistical theory. One can consult  \cite{cortes, shima} to find out more about this geometry and its origins. To summarize, in this paper, we generalize all the results obtained in \cite{bouben} to para-K\"ahler Lie algebroids, and  we introduce a natural generalization of pseudo-Hessian manifolds, we them call generalized pseudo-Hessian manifolds. Let us give briefly the definition of this structure and some of its striking properties. A generalized pseudo-Hessian manifold is triple $(M,D,h)$ where $(M,D)$ is a locally affine manifold and $h$ is a symmetric bivector field such that the tensor $T\in\Ga(\otimes^3 TM)$ given by $T(\al,\be,\ga)=D_{h_\#(\al)}h(\be,\ga)$ is totally symmetric, where $h_\#:T^*M\too TM$ is given by $\be(h_\#(\al))=h(\al,\be)$. When $h$ is invertible and of constant signature (for instance when $M$ is connected), $(M,D,h^{-1})$ is a pseudo-Hessian manifold.  There are many similarities between Poisson manifolds as a generalization of symplectic manifolds and generalized pseudo-Hessian manifolds as a generalization of pseudo-Hessian manifolds. Indeed, if $(M,D,h)$ is a pseudo-Hessian manifold then $\mathrm{Im}h_\#$ is an integrable distribution and defines a singular foliation whose leaves are pseudo-Hessian manifolds. There is an analogue of Darboux-Weinstein theorem (see Theorem \ref{local}) and the bracket $[\;,\;]_h$ on $\Om^1(M)$ and $\D:\Om^1(M)\times\Om^1(M)\too \Om^1(M)$ given by
\begin{equation*} \prec\D_\al\be,X\succ=\na_X h(\al,\be)+\prec\na^*_{h_\#(\al)}\be,X\succ\esp [\al,\be]_\D=
\na^*_{h_\#(\al)}\be-\na^*_{h_\#(\be)}\al, \end{equation*}satisfy $(T^*M,M,h_\#,[\;,\;]_h)$ is a Lie algebroid and $\D$ is a torsionless flat connection for this Lie algebroid. Moreover, for any $x\in M$, $\mathcal{A}_x=\ker h_\#(x)$ carries a natural structure of commutative associative algebra. On the other hand, let $(\mathcal{A},.)$ be a commutative associative algebra $(\mathcal{A},.)$. Denote by $D$ the canonical affine connection on $\mathcal{A}^*$ and define the symmetric bivector field $h$ on  $\mathcal{A}^*$ by
\begin{equation*} h(\al,\be)(\mu)=\prec\mu,\al(\mu).\be(\mu)\succ,\quad \al,\be\in\Om^1(\mathcal{A}^*)=C^\infty(\mathcal{A}^*,\mathcal{A}),\mu\in \mathcal{A}^*. \end{equation*}
Then $(\mathcal{A}^*,D,h)$ is a generalized pseudo-Hessian manifolds and the leaves of the foliation associated to $\mathrm{Im}h_\#$ are the orbits of the action $\Phi$ of $(\mathcal{A},+)$ on $\mathcal{A}^*$ given by $\Phi(a,\mu)=\exp(L_a^*)(\mu)$  where $L_a(b)=a.b$ (See Theorem \ref{main}). Thus the orbits of $\Phi$ are pseudo-Hessian manifolds. This give powerful machinery to build examples of pseudo-Hessian structures. We will show that the pseudo-Hessian structure of these orbits is not trivial since their Hessian curvature is not zero. We illustrate this result by considering many examples of associative commutative algebras an show that the pseudo-Hessian structures obtained on the orbits of $\Phi$ are very interesting.

We give now the organization of this paper. In Section \ref{section2}, we recall some basic fact about Lie algebroids and connections on Lie algebroids. A Lie algebroid with a torsionless and flat connection was called left symmetric algebroid in \cite{bai2}. These algebroids play a central role in the study of para-k\"ahler Lie algebroids. We adopt the terminology of left symmetric algebroids as in \cite{bai2} and we give some of their geometrical properties we will use later. In Section \ref{section3}, we start the study of para-K\"ahler Lie algebroids and the main result here is Theorem \ref{theoextendible} which states that a para-K\"ahler Lie algebroids is obtained from two left symmetric algebroids on two dual vector bundles compatible in some sense. In Section
\ref{section4} and \ref{section5}, we study exact para-K\"ahler Lie algebroids and the related notions of $\S$-matrices and quasi-\S-matrices on a left symmetric algebroid. If $(M,D)$ is an affine manifold then $(TM,M,D)$ becomes a left symmetric algebroid and a symmetric $\S$-matrix on $(TM,M,D)$ defines a generalized pseudo-Hessian structure on $(M,D)$. Section  \ref{section6} is devoted to the study of this new structure. In Section \ref{section7}, we study linear pseudo-Hessian manifolds and we give many examples.

{\bf Notations:} Let  $A\too M$ be a vector bundle and $F:A\too A$  a bundle endomorphism.  We denote by $\Ga(A)$ the space of its sections and by $F^*:A^*\too A^*$  the dual endomorphism. For any $X\in A_x$ and $\al\in A^*_x$, we denote $\al(X)$ by $\prec\al,X\succ$.
 The phase space of $A$ is the vector bundle $\Phi(A):=A\oplus A^*$ endowed with the two nondegenerate bilinear forms $\prs_0$ and $\Om_0$ given by
 \begin{equation*}\label{fdual}
 \langle u+\al,v+\be\rangle_0=\prec\al,v\succ+\prec\be,u\succ\esp
 \Om_0(u+\al,v+\be)=\prec\be,u\succ-\prec\al,v\succ.
 \end{equation*}We denote by $K_0:\Phi(A)\too\Phi(A)$ the bundle endomorphism given by $ K_0(u+\al)=u-\al$.  \\
 Let $\om\in\Ga(\wedge^2A^*)$ which is nondegenerate. We denote by $\flat:A\too A^*$ the bundle isomorphism given by $\flat(v)=\om(v,.)$.

\section{Lie algebroids, connections, Levi-Civita connections, left symmetric algebroids and symplectic Lie algebroids}\label{section2}

Through this paper, we will use some well-known  basic notions,  namely, anchored bundles, Lie algebroids, connections on Lie algebroids and symplectic Lie algebroids. In this section, we recall the definitions of these notions, we give some of their properties and some basic examples. For more details one can consult \cite{car, fer1, mac}. We introduce also  left symmetric algebroids generalizing left symmetric algebras. They are Lie algebroids which will play a central role in the study of para-K\"ahler Lie algebroids.

\paragraph{Lie algebroids and their immediate properties} An {\it anchored vector bundle} is a triple $(A,M,\rho)$ where  $p:A\too M$ is a vector bundle and $\rho:A\too TM$ is a bundle homomorphism called anchor. An homomorphism between two anchored vector bundles $(A,M,\rho)$ and $(A',M',\rho')$ is a bundle homomorphism $\phi:A\too A'$ such that $\rho=\rho'\circ \phi$. The sum of two anchored bundles $(A,M,\rho)$ and $(B,M,\rho')$ is the anchored bundle $(A\oplus B,M,\rho\oplus\rho')$.\\
Let $(A,M,\rho)$ be an anchored vector bundle. A bracket  on $\Ga(A)$ is a skew-symmetric $\R$-bilinear map $\br_A:\Ga(A)\times\Ga(A)\too\Ga(A)$. It
is called {\it anchored} if
 for any $a,b\in\Ga(A)$ and for every smooth
function $f\in C^\infty(M)$,
\begin{equation}\label{eq1}[a,fb]_A=f[a,b]_A+\rho(a)(f)b.\end{equation}
By using a classical argument, we can deduce from
this relation  that $\br_A$ is local in the sense that if a section $a$ vanishes on an open set $U$ then for any $b\in\Ga(A)$, $[a,b]_A$ vanishes on $U$. The {\it torsion} of $\br_A$ is the map $\tau_{\br_A}:\Ga(A)\times\Ga(A)\too\mathcal{X}(M)$ given by
\begin{equation}\label{tau}
\tau_{\br_A}(a,b)=\rho([a,b]_A)-[\rho(a),\rho(b)].
\end{equation}$\tau_{\br_A}$ is $\R$-bilinear, skew-symmetric and, for any $f\in C^\infty(M)$,
\[ \tau_{\br_A}(fa,b)=\tau_{\br_A}(a,fb)=f\tau_{\br_A}(a,b). \] So $\tau_{\br_A}\in\Ga( \wedge^2A^*\otimes TM)$. In order to study under which conditions $\br_A$ is a Lie bracket, we introduce
the Jacobiator of $\br_A$ as $J_{\br_A}:\Ga(A)\times\Ga(A)\times\Ga(A)\too
 \Ga(A)$ given by
 \[ J_{\br_A}(a,b,c)=[[a,b]_A,c]_A+[[b,c]_A,a]_A+[[c,a]_A,b]_A. \]
 $J$ is $\R$-trilinear and  skew-symmetric. Thus $\br_A$ is a Lie bracket if and only if $J_{\br_A}=0$. However, this equation is not tensorial and may be very difficult to check in concrete situations. Nevertheless, 
   for any $a,b,c\in\Ga(A)$ and any  $f\in C^\infty(M)$,
 \begin{equation}\label{jacobiator}
 J_{\br_A}(a,b,fc)=fJ_{\br_A}(a,b,c)+\tau_{\br_A}(a,b)(f)c.
 \end{equation}
 This relation shows that $J_{\br_A}$ is local and if $\tau_{\br_A}$ vanishes then $J_{\br_A}$ becomes a tensor, namely, $J_{\br_A}\in\Ga(\wedge^3A^*\otimes TM)$. This shows also that if $J_{\br_A}$ vanishes then $\tau_{\br_A}$ does.  The following proposition is an immediate consequence of \eqref{jacobiator} and give us an useful way of checking if an anchored bracket is actually a Lie bracket.

 \begin{pr}\label{pr1} Let $(A,M,\rho)$ be an anchored bundle and $\br_A$  an anchored bracket on $\Ga(A)$. Then the following assertions are equivalent:
 \begin{enumerate}\item[$(i)$] $(\Ga(A),\br_A)$ is a Lie algebra, i.e., $J_A$ vanishes identically.
 \item[$(ii)$] For any $x\in M$ there exists an open set $U$ of $M$ containing $x$ and a basis of sections $(a_1,\ldots,a_r)$ over $U$ such that
  \[ J_{\br_A}(a_i,a_j,a_k)=0\esp \tau(a_i,a_j)=0,\quad 1\leq i<j<k\leq r. \]
  
 \end{enumerate}

 \end{pr}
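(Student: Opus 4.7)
The implication $(i)\Rightarrow (ii)$ is immediate: if $J_{\br_A}$ vanishes identically, then by the remark preceding the proposition (which uses \eqref{jacobiator} to derive that $J_{\br_A}=0$ forces $\tau_{\br_A}=0$), both tensors vanish on \emph{every} local basis of sections, so any basis near $x$ works. The real content is the converse.

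For $(ii)\Rightarrow(i)$, my plan is first to upgrade the hypothesis from ``vanishing on basis triples'' to ``vanishing everywhere locally'', and then to use locality to globalize. Fix $x\in M$ and choose $U$ and the basis $(a_1,\ldots,a_r)$ provided by $(ii)$. Since $\tau_{\br_A}\in\Ga(\wedge^2A^*\otimes TM)$ has already been established to be $C^\infty(M)$-linear in both arguments, the assumption $\tau_{\br_A}(a_i,a_j)=0$ for all $i<j$ forces $\tau_{\br_A}|_U=0$. Because $x$ was arbitrary and the open sets $U=U_x$ cover $M$, we conclude $\tau_{\br_A}\equiv 0$ on $M$.

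Now I would exploit the formula \eqref{jacobiator}. With $\tau_{\br_A}\equiv 0$, that identity reduces to $J_{\br_A}(a,b,fc)=fJ_{\br_A}(a,b,c)$, i.e.\ $J_{\br_A}$ is $C^\infty(M)$-linear in its third argument; by its total skew-symmetry it is $C^\infty(M)$-linear in all three arguments, hence a tensor $J_{\br_A}\in\Ga(\wedge^3A^*\otimes A)$. On the open set $U$ above, its components in the basis $(a_1,\ldots,a_r)$ are precisely the values $J_{\br_A}(a_i,a_j,a_k)$ for $i<j<k$, which vanish by hypothesis; therefore $J_{\br_A}|_U=0$. Covering $M$ by such open sets $U_x$ yields $J_{\br_A}=0$ everywhere, which is $(i)$.

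The only subtle point I anticipate is the order of the two arguments: one must first deduce the global vanishing of $\tau_{\br_A}$ before one can claim that $J_{\br_A}$ is a tensor, since the tensorial character of $J_{\br_A}$ in formula \eqref{jacobiator} is predicated on $\tau_{\br_A}=0$. Once that ordering is respected, the argument is essentially bookkeeping on the two Leibniz-type identities already recorded in the text, and no further computation is required.
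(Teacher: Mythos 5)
Your proof is correct and follows exactly the route the paper intends: the paper offers no written proof beyond asserting that the proposition is "an immediate consequence of \eqref{jacobiator}", and your argument (globalize the vanishing of the tensor $\tau_{\br_A}$ first, then use \eqref{jacobiator} to conclude that $J_{\br_A}$ is tensorial and hence determined by its values on basis triples) is precisely the bookkeeping that assertion leaves implicit. The ordering caveat you flag is the right one, and nothing further is needed.
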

 \begin{Def}
  A Lie algebroid is an anchored vector bundle $(A,M,\rho)$ together with an  anchored bracket $\br_A$ satisfying $(i)$ or $(ii)$ of Proposition \ref{pr1}. A Lie algebroid is called transitive if its anchor map is onto on every point and it is called regular if the rank of $\rho$ is constant on $M$.
  \end{Def}

 There are some well-known properties  of a Lie algebroid $(A,M,\rho,[\;,\;]_A)$.
\begin{enumerate}\item[$(a)$]  The induced map $\rho:\Ga(A)\too\X(M)$ is a Lie algebras
homomorphism.
\item[$(b)$] The smooth distribution $\mathrm{Im}\rho$ is integrable in the sense of Sussmann \cite{sus} and, for any leaf $L$ of $\mathrm{Im}\rho$, $(A_{|L},L,\rho,[\;,\;]_A)$ is a transitive Lie algebroid.
\item[$(c)$] For any $x\in
M$, there is an induced Lie bracket say $\br_x$ on
${\G}_x=\ker(\rho_x)\subset A_x$ which makes it
into a finite dimensional Lie algebra.
\item[$(d)$] The map $d_A:\Ga(\wedge A^*)\too\Ga(\wedge A^*)$ by
\[ d_AQ(a_1,\ldots,a_p)=\sum_{i=1}^p(-1)^{i+1} \rho(a_i).Q(a_1,\ldots,\hat{a}_i,\ldots,a_p)
-\sum_{1\leq i<j\leq p}(-1)^{i+j+1}Q([a_i,a_j]_A,a_1,\ldots,\hat{a}_i,\ldots,\hat{a}_j,\ldots,a_p), \] is a differential, i.e., $d_A^2=0$.
In particular, for any $a,b\in\Ga(A)$, $f\in C^\infty(M)$ and  $Q\in\Ga(\wedge A^*)$,
\[ d_Af(a)=\rho(a)(f)\esp
 d_AQ(a,b)=\rho(a).Q(b)-\rho(b).Q(a)-Q([a,b]_A). \]
 These two relations show that there is a correspondence between  Lie algebroids structure on $(A,M)$ and differentials on  $\Ga(\wedge A^*)$.
\item[$(e)$] The bracket $\br_A$ extends to a new bracket denoted in the same way on the sections of $\wedge A=A\oplus\ldots\oplus\wedge^{\mathrm{rank}A}A$ called {\it generalized Schouten-Nijenhuis bracket}. Its
properties are the same as those of the usual Schouten-Nijenhuis bracket and the anchor extends also to give a map $\rho:\wedge A\too\wedge TM$ which preserves Schouten-Nijenhuis brackets. It is important here to point out that if $\Pi\in\Ga(\wedge^2A)$ satisfies $[\Pi,\Pi]_A=0$ then $\pi=\rho(\Pi)$ is a Poisson tensor on $M$.
\end{enumerate}

\paragraph{Some examples of Lie algebroids} \begin{enumerate}\item The basic example of a Lie algebroid over $M$ is the tangent bundle itself, with the identity mapping as anchor.
	\item Every finite dimensional Lie algebra is a Lie algebroid over a one point space. 
	\item Any integrable subbundle of $TM$ is a Lie algebroid with the inclusion as anchor and the induced bracket.\item Let $(M,\pi)$ be a Poisson manifold. The bivector field $\pi$ defines a bundle homomorphism $\pi_{\#}:T^*M\too TM$ and a bracket on $\Om^1(M)$ by
	\[ [\al,\be]_\pi=\mathcal{L}_{\pi_{\#}(\al)}\be- \mathcal{L}_{\pi_{\#}(\be)}\al-d\pi(\al,\be)\]such that $(T^*M,M,\pi_{\#},\br_\pi)$ is a Lie algebroid.\item
	Let $\G\stackrel{\tau}\too\X(M)$ be an action of a finite-dimensional real
	Lie algebra $\G$ on a smooth manifold $M$,
	i.e., a morphism of Lie algebras
	from $\G$ to the Lie algebra of vector fields
	on $M$. Consider $(A,M,\rho,\br_A)$, where $A=M\times\G$ as a trivial bundle and
	\[ \rho((m,\xi))=\tau(\xi)(m)\esp    [\xi,\eta]_A=\mathcal{L}_{\rho(\xi)}\eta-
	\mathcal{L}_{\rho(\eta)}\xi+[\xi,\eta]_\G,\quad\eta,\xi\in\Ga(A)=C^\infty(M,\G). \]By using $(ii)$ of Proposition \ref{pr1}, it is easy to check that $(A,M,\rho,\br_A)$ is a Lie algebroid.
\end{enumerate}

\paragraph{Connections on Lie algebroids} Given a Lie algebroid $(A,M,\rho,\br_A)$,  an $A$-connection on a vector bundle $E\too M$  is
a $\R$-bilinear operator $\na:\Ga(A)\times\Ga(E)\too\Ga(E)$  satisfying:
 $$\na_{fa}s=f\na_a s\esp 
\na_a(fs)=f\na_a s+\rho(a)(f)s,$$
 for any
$a\in\Ga(A)$, $s\in\Ga(E)$ and $f\in C^\infty(M).$ We shall call  $A$-connections on the vector bundle $A\too M$
 \emph{ linear $A$-connections}. The
 curvature of an $A$-connection $\na$ on $E$ is formally identical
 to the usual definition
 $$R(a,b)s=\na_{a}\na_{b}s-\na_{b}\na_{a}s-\na_{[a,b]_A}s,$$
 where $a,b\in\Ga(A)$ and $s\in\Ga(E)$. The connection $\na$ is
 called flat if $R$ vanishes identically.  The dual of  $\na$  is the $A$-connection $\na^*$ on $E^*$ given by
  \begin{equation}\label{dual} \prec\na_a^*\al,s\succ=\rho(a).\prec\al,s\succ-\prec\al,\na_as\succ, \end{equation}
 for any $a\in\Ga(A),s\in\Ga(E),\al\in\Ga(E^*)$.
 
 There is a notion of parallel transport associated to an $A$-connection $\na$ on $q:E\too M$. An $A$-path is a curve $\al:[a,b]\too A$ such that
 \[ \forall t\in[a,b],\quad \rho(\al(t))=c'(t), \]where $c=p\circ\al:[a,b]\too M$. Given an $A$-path $\al:[a,b]\too A$, we denote by $\Ga_\al(E)$ the vector space of curves $s:[a,b]\too E$ such that $q\circ s=p\circ\al$. The connection $\na$ defines a unique derivative $\na^\al:\Ga_\al(E)\too \Ga_\al(E)$ and a parallel transport  $\tau_{\al}:E_{c(a)}\too E_{c(b)}$ given by $\tau_\al(s_1)=s(a)$ where $s\in\Ga_\al(E)$ is uniquely determined by $\na^\al s=0$ and $s(a)=s_1$.

 \paragraph{The canonical connection on the adjoint bundle}  Let $(A,M,\rho,\br_A)$ be a Lie algebroid such that $\rho$ has a constant rank over $M$. Then {\it the adjoint bundle} ${\G}=\ker\rho\too M$ is a vector bundle of Lie algebras. Define $\na^\G:\Ga(A)\times\Ga({\G })\too\Ga({\G})$ by
\begin{equation}\label{connection}
\na_a^\G s=[a,s]_A.
\end{equation}This defines a  $A$-connection on ${\G}$ satisfying, for any $a,b\in\Ga(A)$, $s_1,s_2\in\Ga(\G)$,
\begin{equation}\label{connection1} R^{{\na}^\G}(a,b)=0\esp
\na_a^\G[s_1,s_2]_A=[\na_a^\G s_1,s_2]_A+[s_1,\na_a^\G s_2]_A,
\end{equation}for any $a\in \Ga(A)$ and $s_1,s_2\in \Ga({\G})$. For any $x,y\in M$ lying in the same leaf of the characteristic foliation, there exists an $A$-path $\al:[a,b]\too A$ such that $p\circ\al(a)=x$ 	and $p\circ\al(b)=y$. The second relation in \eqref{connection1} shows that the parallel transport $\tau_\al:\G_x\too\G_y$ is an isomorphism of Lie algebras.

\paragraph{The Levi-Civita connection of a pseudo-Riemannian Lie algebroid}  A pseudo-Riemannian metric of signature $(p,q)$ on a Lie algebroid $(A,M,\rho,\br_A)$ is the data,
for any $x\in M$, of  a nondegenerate product $\prs_x$ of signature $(p,q)$ on the fiber $A_x$
such that, for any local sections $a,b$ of $A$, the function
$\langle a,b\rangle$ is smooth. A Lie algebroid with a pseudo-Riemannian metric is called {\it pseudo-Riemannian Lie algebroid.}\\
The most interesting fact about pseudo-Riemannian  Lie
algebroids is the existence on the analogous of the Levi-Civita
connection. Indeed, if $\prs$ is a pseudo-Riemannian metric on a Lie
algebroid $(A,M,\rho,\br_A)$ then the formula
\begin{eqnarray*}
	2\langle\na_ab,c\rangle&=&{\rho}(a).\langle b,c\rangle+{\rho}(b).\langle a,c\rangle-
	{\rho}(c).\langle a,b\rangle
	+\langle[c,a]_A,b\rangle+\langle[c,b]_A,a\rangle+\langle[a,b]_A,c\rangle\end{eqnarray*}defines
a linear $A$-connection  which is characterized by the two following properties:
\begin{enumerate}
	\item[$(i)$] $\na$ is metric, i.e., $\rho(a).\langle b,c\rangle=\langle\na_ab,c\rangle+\langle b,\na_ac\rangle$,
	
	\item[$(ii)$] $\na$ is torsion free, i.e., $\na_ab-\na_ba=[a,b]_A.$\end{enumerate}
We  call $\na$ the \emph{ Levi-Civita $A$-connection} associated to
$\prs$. Moreover, $\prs$ and $\rho$ define
a symmetric bivector field $h\in\Ga(TM\otimes TM)$  by
\begin{equation}\label{metric} h(\al,\be)=\langle \#\circ\rho^*(\al),\#\circ\rho^*(\be)\rangle= \prec\be,\rho\circ\#\circ\rho^*(\al)\succ=
\prec\al,\rho\circ\#\circ\rho^*(\be)\succ, \end{equation}
where $\#:A^*\too A$ is the isomorphism associated to $\prs$. The following relations are easy to check:
\[ \G_x^\perp=\#\circ\rho^*(T_x^*M),\; \G_x\cap \G_x^\perp=\#\circ\rho^*(\ker h_x)\esp \ker\rho^*_x\subset\ker h_x,\; x\in M,\;\G_x=\ker\rho_x. \]
These relations show that $h_x$ is nondegenerate if and only if $\G_x$ is $\prs$-nondegenerate and $\rho_x$ is onto.

\paragraph{Symplectic Lie algebroids} A \emph{symplectic } form on a Lie algebroid   $(A,M,\rho,\br_A)$ is
a bilinear skew-symmetric nondegenerate form $\om\in\Ga(\wedge^2A^*)$ such that, for any $a,b,c\in\Ga(A)$,
\begin{equation*}\label{eq14}d_A\om(a,b,c):=
\rho(a).\om(b,c)+\rho(b).\om(c,a)+\rho(c).\om(a,b)- \om([a,b]_A,c)-\om([b,c]_A,a)-\om([c,a]_A,b)=0.
\end{equation*} We call $(A,M,\rho,\br_A,\om)$ a {\it symplectic Lie algebroid}. There is a natural Poisson structure on $M$ associated to $(A,M,\rho,\br_A,\om)$.
\begin{pr}\label{prpoisson} Let $(A,M,\rho,\br_A,\om)$ be a symplectic Lie algebroid and $\flat:A\too A^*$, $a\mapsto \om(a,.)$. Then the bivector field $\pi$ given, for any
	$\al,\be\in\Om^1(M)$, by
	\begin{equation}\label{poisson}
	\pi(\al,\be)=\om(\flat^{-1}\circ\rho^*(\al),
	\flat^{-1}\circ\rho^*(\be))=\prec\be,\rho\circ\flat^{-1}\circ\rho^*(\al)\succ,
	\end{equation}is a Poisson tensor. Moreover, $\flat^{-1}\circ\rho^*:T^*M\too A$  is a Lie algebroid homomorphism, where  $T^*M$ is endowed with the Lie algebroid structure associated to $\pi$.
	
\end{pr}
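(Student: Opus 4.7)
The plan is to exploit the bundle map $\Phi:=\flat^{-1}\circ\rho^*:T^*M\too A$ as a ``Hamiltonian lift'' and to deduce both assertions from a single bracket identity inside $A$. For $\al\in\Om^1(M)$ set $X_\al:=\Phi(\al)\in\Ga(A)$; the defining equation $\flat(X_\al)=\rho^*(\al)$ reads $\om(X_\al,a)=\prec\al,\rho(a)\succ$ for every $a\in\Ga(A)$. This makes $\pi(\al,\be)=\prec\be,\rho(X_\al)\succ$ manifestly tensorial and skew in $(\al,\be)$, and the very definition of $\pi$ already gives $\pi_\#(\al)=\rho(X_\al)=\rho\circ\Phi(\al)$, which is the anchor-compatibility half of the Lie algebroid morphism statement.

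Set $\{f,g\}:=\pi(df,dg)=\rho(X_f)(g)$; the crux is to use $d_A\om=0$ applied to $(X_f,X_g,a)$ with $a\in\Ga(A)$ arbitrary. On the non-bracket terms one substitutes $\om(X_h,b)=\rho(b)(h)$ and $\om(X_f,X_g)=\rho(X_g)(f)=-\{f,g\}$; on the three bracket terms one writes $\om([\,,\,]_A,X_h)=-\rho([\,,\,]_A)(h)$ and expands each $\rho([b,c]_A)$ as the commutator $[\rho(b),\rho(c)]$ using the Lie algebra morphism property of $\rho$. The four ``mixed'' second-order contributions $\rho(X_f)\rho(a)(g)$ and $\rho(X_g)\rho(a)(f)$ cancel in pairs, and after inserting $\rho(X_f)(g)=\{f,g\}$ and $\rho(X_g)(f)=-\{f,g\}$ the surviving terms collapse to
\[\om([X_f,X_g]_A,a)\;=\;\rho(a)\{f,g\}\;=\;\prec d\{f,g\},\rho(a)\succ\;=\;\om(X_{\{f,g\}},a).\]
Non-degeneracy of $\om$ and arbitrariness of $a$ then give the key identity $[X_f,X_g]_A=X_{\{f,g\}}=\Phi(d\{f,g\})$.

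Applying $\rho$ and using $\rho\circ\Phi=\pi_\#$ yields $[\rho(X_f),\rho(X_g)]=\pi_\#(d\{f,g\})$, whence the Jacobi identity for $\{\,,\,\}$:
\[\{\{f,g\},h\}=\rho(X_{\{f,g\}})(h)=\rho(X_f)\rho(X_g)(h)-\rho(X_g)\rho(X_f)(h)=\{f,\{g,h\}\}-\{g,\{f,h\}\},\]
so $\pi$ is Poisson. The Lie algebroid structure on $T^*M$ associated with $\pi$ has anchor $\pi_\#$ (matching $\rho\circ\Phi$) and bracket characterised on exact forms by $[df,dg]_\pi=d\{f,g\}$, so the identity proved above reads exactly $\Phi([df,dg]_\pi)=[\Phi(df),\Phi(dg)]_A$; the general statement $\Phi([\al,\be]_\pi)=[\Phi(\al),\Phi(\be)]_A$ then follows by a routine Leibniz argument, using the $C^\infty(M)$-linearity of $\Phi$ and $\rho\circ\Phi=\pi_\#$. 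The main obstacle is thus the sign-careful expansion of $d_A\om(X_f,X_g,a)=0$: once the three bracket terms are rewritten as commutators through $\rho([\,,\,]_A)=[\rho,\rho]$ the cancellation of the mixed second-order terms is telescoping, but the book-keeping with the conventions $\flat(v)=\om(v,\cdot)$ and $\pi(\al,\be)=\prec\be,\rho(X_\al)\succ$ requires care.
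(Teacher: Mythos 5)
Your proof is correct, but it takes a genuinely different route from the paper's. The paper argues at the level of the Schouten calculus: the inverse of $\om$ is a section $\Pi\in\Ga(\wedge^2A)$, the condition $d_A\om=0$ is equivalent to $[\Pi,\Pi]_A=0$ for the generalized Schouten--Nijenhuis bracket, and since the extended anchor $\rho:\Ga(\wedge A)\too\Ga(\wedge TM)$ preserves Schouten brackets, $\pi=\rho(\Pi)$ satisfies $[\pi,\pi]=0$; the morphism statement is then obtained by factoring $\flat^{-1}\circ\rho^*$ as the composition of the two Lie algebroid morphisms $\rho^*:T^*M\too A^*$ (where $A^*$ carries the bracket induced by $\Pi$) and $\flat^{-1}:A^*\too A$. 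You instead work entirely with Hamiltonian sections: the expansion of $d_A\om(X_f,X_g,a)=0$, with the anchor's Lie-morphism property used to turn each $\rho([\cdot,\cdot]_A)$ into a commutator of vector fields, collapses to the single identity $[X_f,X_g]_A=X_{\{f,g\}}$, from which both the Jacobi identity for $\{\cdot,\cdot\}$ and the morphism property on exact forms follow, the latter extending to all of $\Om^1(M)$ by the Leibniz rule since the anchors match. I checked the cancellation and your bookkeeping is right; note that you have implicitly (and correctly) taken the second expression in \eqref{poisson} as the definition of $\pi$ --- with the stated convention $\flat(v)=\om(v,\cdot)$ the two expressions there actually differ by a sign, and yours is the one consistent with the cotangent-algebroid claim. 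What each approach buys: the paper's argument is shorter but leans on general facts about the generalized Schouten--Nijenhuis bracket and the compatibility of $\rho$ with it, which the paper only asserts in item $(e)$ of Section \ref{section2}; yours is self-contained, uses only the definition of $d_A$ and the fact that $\rho$ is a Lie algebra morphism on sections, and produces the explicit identity $[X_f,X_g]_A=X_{\{f,g\}}$ as a by-product, at the cost of a longer direct computation.
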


\begin{proof}
	The inverse of $\om$ is an element $\Pi\in\Ga(\wedge^2A)$ and the condition $d_A\om=0$ is equivalent to $[\Pi,\Pi]_A=0$,  the bracket here is the generalized Schouten-Nijenhuis bracket.  The image $\pi=\rho(\Pi)$ where  $\rho:\wedge^2A\too\wedge^2TM$  is a Poisson tensor on $M$. Moreover, $\Pi$ defines on $\Ga(A^*)$ a Lie bracket such that $\flat^{-1}:A^*\too A$ is a morphism of Lie algebroids. On the other hand, since $\pi=\rho(\Pi)$, $\rho^*:T^*M\too A^*$ is a morphism of Lie algebroids. Finally,
	it is easy to check that $\pi$ is given by \eqref{poisson}.\end{proof}

Let $(A,M,\rho,\br_A,\om)$ be a symplectic Lie algebroid, as in the case of a pseudo-Riemannian Lie algebroid, we have
\begin{equation}\label{important} \G_x^\om=\flat^{-1}\circ\rho^*(T_x^*M),\; \G_x\cap \G_x^\om=\flat^{-1}\circ\rho^*(\ker \pi_x)\esp \ker\rho^*_x\subset\ker \pi_x,\; x\in M,\;\G_x=\ker\rho_x, \end{equation}where $\G_x^\om$ is the orthogonal of $\G_x$ with respect to $\om$. These relations show that $\pi_x$ is invertible if and only if $\G_x$ is $\om$-nondegenerate and $\rho_x$ is onto.

\paragraph{Left symmetric algebroids} Let $(A,M,\rho)$ be an anchored vector bundle.
A {\it right-anchored product} on  $(A,M,\rho)$ is $\R$-bilinear map $T:\Ga(A)\times\Ga(A)\too\Ga(A)$
such that, for any $a,b\in\Ga(A)$ and any
$f\in C^\infty(M)$,
\begin{equation}\label{eq2t}T_{fa}b=fT_ab\esp T_a(fb)=fT_ab+\rho(a)(f)b.\end{equation}
To $T$ we associate the anchored bracket $[a,b]_T=T_ab-T_ba$.
The curvature of $T$ is the $\R$-trilinear map $R^T:\Ga(A)\times\Ga(A)\times\Ga(A)\too\Ga(A)$ given by
\[ R^T(a,b)c=[T_a,T_b]c-T_{[a,b]_T}c. \]
The curvature $R^T$ satisfies, for any $a,b,c\in\Ga(A)$,
\begin{equation}\label{curvature}\left\{\begin{array}{l} R^T(a,b)c=-R^T(b,a)c,\\
	R^T(fa,b)c=R^T(a,fb)c=fR^T(a,b)c\\
	\;R^T(a,b)fc=fR^T(a,b)c+\tau_{\br_T}(a,b)(f)c,
	\end{array}\right.
	\end{equation}
	and the Bianchi's identity
	\begin{equation}\label{bianchi}
	R^T(a,b)c+R^T(b,c)a+R^T(c,a)b=J_{\br_T}(a,b,c).
	\end{equation}

From theses relations we deduce that $R^T$ is local and $T$ is called Lie-admissible if $\br_T$ induces a Lie algebroid structure on $(A,M,\rho)$. The following proposition follows easily from Proposition \ref{pr1} and \eqref{bianchi}.

\begin{pr}\label{pr2} Let $(A,M,\rho)$ be an anchored bundle and $T$  a right-anchored product on $\Ga(A)$. Then $T$ is Lie-admissible if and only for any $x\in M$ there exists an open set $U$ of $M$ containing $x$ and a basis of sections $(a_1,\ldots,a_r)$ over $U$ such that
	\[ R^T(a_i,a_j)a_k+R^T(a_j,a_k)a_i+R^T(a_k,a_i)a_j=0\esp \tau_{\br_T}(a_i,a_j)=0,\quad 1\leq i<j<k\leq r. \]

\end{pr}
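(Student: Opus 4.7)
The plan is to reduce this to Proposition \ref{pr1} applied to the anchored bracket $[\;,\;]_T$, using the Bianchi identity \eqref{bianchi} as the bridge between the curvature condition and the Jacobiator condition appearing in Proposition \ref{pr1}(ii).

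First I would verify that $[\;,\;]_T$ is indeed an anchored bracket on $\Ga(A)$, i.e., that it satisfies \eqref{eq1}. This follows immediately from the two Leibniz-type relations in \eqref{eq2t}: for $a,b\in\Ga(A)$ and $f\in C^\infty(M)$, one computes
\[ [a,fb]_T = T_a(fb) - T_{fb}a = fT_ab + \rho(a)(f)b - fT_ba = f[a,b]_T + \rho(a)(f)b. \]
Thus the hypotheses of Proposition \ref{pr1} are satisfied for $(A,M,\rho,[\;,\;]_T)$, and being Lie-admissible for $T$ is by definition the assertion that $(\Ga(A),[\;,\;]_T)$ is a Lie algebra.

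For the forward implication, if $T$ is Lie-admissible then $J_{\br_T}\equiv 0$ on all sections and, by the general property of anchored brackets recalled before Proposition \ref{pr1} (namely that $J_{\br_T}=0$ forces $\tau_{\br_T}=0$), also $\tau_{\br_T}\equiv 0$. Substituting into the Bianchi identity \eqref{bianchi} gives $R^T(a,b)c+R^T(b,c)a+R^T(c,a)b = 0$ for any triple of sections, hence in particular on any local basis. For the reverse implication, pick $x\in M$ and a local basis $(a_1,\ldots,a_r)$ on some open $U\ni x$ for which the cyclic sum of curvatures and the torsions $\tau_{\br_T}(a_i,a_j)$ all vanish. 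Evaluating the Bianchi identity \eqref{bianchi} on the triple $(a_i,a_j,a_k)$ immediately gives $J_{\br_T}(a_i,a_j,a_k) = 0$. Combined with $\tau_{\br_T}(a_i,a_j)=0$, this is precisely condition $(ii)$ of Proposition \ref{pr1}, which implies that $[\;,\;]_T$ satisfies the Jacobi identity on $\Ga(A)$. Hence $T$ is Lie-admissible.

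There is really no hard step; the only mildly delicate point is that the tensoriality of $J_{\br_T}$ and $\tau_{\br_T}$ on a basis (so that vanishing on basis sections propagates to all sections over $U$) is not automatic a priori for $J$, which is why Proposition \ref{pr1} was needed in the first place. Once that is granted, the Bianchi identity converts the curvature version of the Jacobi obstruction into the bracket version in one line, and the statement follows.
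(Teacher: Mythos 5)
Your proof is correct and follows exactly the route the paper intends: the paper states that Proposition \ref{pr2} ``follows easily from Proposition \ref{pr1} and \eqref{bianchi}'', and your argument is precisely that reduction, using the Bianchi identity to identify the cyclic curvature sum with the Jacobiator and then invoking Proposition \ref{pr1} in both directions. Nothing is missing.
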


The following proposition is a consequence of \eqref{curvature}.

\begin{pr}\label{pr3} Let $(A,M,\rho)$ is an anchored bundle and $T$ is a right-anchored product on $\Ga(A)$. Then the following assertions are equivalent.
	\begin{enumerate}\item[$(i)$] The curvature $R^T$ vanishes identically.
		\item[$(ii)$] For any $x\in M$ there exists an open set $U$ of $M$ containing $x$ and a basis of sections $(a_1,\ldots,a_r)$ over $U$ such that
		\[ R^T(a_i,a_j)a_k=0\esp \tau_{\br_T}(a_i,a_j)=0,\quad 1\leq i<j\leq r,\; 1\leq k\leq r. \]
		
	\end{enumerate}

\end{pr}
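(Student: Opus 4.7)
The plan is to derive the proposition directly from the relations in \eqref{curvature}, which already encode the partial tensoriality of $R^T$. The implication $(i)\Rightarrow(ii)$ is essentially free: if $R^T$ vanishes identically, then in any local frame $(a_1,\ldots,a_r)$ over an open neighbourhood $U$ of $x$ we trivially have $R^T(a_i,a_j)a_k=0$, and the third relation $R^T(a,b)(fc)=fR^T(a,b)c+\tau_{\br_T}(a,b)(f)c$ forces $\tau_{\br_T}(a,b)(f)c=0$ for all $a,b,c\in\Ga(A)$ and all $f\in C^\infty(M)$; choosing $c$ locally nowhere-vanishing and letting $f$ range over $C^\infty(M)$ yields $\tau_{\br_T}(a,b)=0$, in particular on the basis $(a_1,\ldots,a_r)$.

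For the nontrivial direction $(ii)\Rightarrow(i)$, I would fix $x\in M$ and work inside the open set $U$ provided by the hypothesis. Any sections $a,b,c\in\Ga(A)$ decompose on $U$ as $a=\sum_i f_i a_i$, $b=\sum_j g_j a_j$, $c=\sum_k h_k a_k$ with $f_i,g_j,h_k\in C^\infty(U)$. The $C^\infty$-bilinearity of $R^T$ in its first two arguments, given by the second line of \eqref{curvature}, reduces the computation to
\[ R^T(a,b)c=\sum_{i,j}f_ig_jR^T(a_i,a_j)c. \]
Expanding the third slot by means of the third line of \eqref{curvature} gives
\[ R^T(a_i,a_j)c=\sum_k\bigl(h_kR^T(a_i,a_j)a_k+\tau_{\br_T}(a_i,a_j)(h_k)a_k\bigr), \]
and both summands vanish by the hypothesis $(ii)$. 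Hence $R^T(a,b)c$ vanishes on $U$, and since $x$ was arbitrary, $R^T\equiv 0$.

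There is no real obstacle here: the argument is a bookkeeping exercise using the explicit form of the $C^\infty$-defect of $R^T$ in its third slot, exactly in the spirit of the proof of Proposition \ref{pr1}. The only mildly delicate point is recognising, in the $(i)\Rightarrow(ii)$ direction, that the pointwise identity $\tau_{\br_T}(a,b)(f)c=0$ for arbitrary $f$ and $c$ actually forces $\tau_{\br_T}(a,b)=0$ as a vector field, which is why one must pick a locally nowhere-vanishing test section $c$ on $U$.
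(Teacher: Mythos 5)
Your proof is correct and follows exactly the route the paper intends: Proposition \ref{pr3} is stated there as "a consequence of \eqref{curvature}", and your argument is precisely the elaboration of that, using $C^\infty$-bilinearity in the first two slots, the $\tau$-defect formula in the third slot, and (implicitly, via the first line of \eqref{curvature}) skew-symmetry to cover the indices $i\geq j$.
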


\begin{Def}
	
	A {\it left symmetric algebroid} is an anchored bundle $(A,M,\rho)$ together with a right-anchored product $T$ satisfying $(i)$ or $(ii)$ of Proposition \ref{pr3}. 
	
\end{Def}

It is obvious that if $(A,M,\rho,T)$ is a left symmetric algebroid then $(A,M,\rho,\br_T)$ is a Lie algebroid.

\begin{remark} \begin{enumerate}\item A  right-anchored product is Lie-admissible if the Jacobiator of the associated anchored bracket vanishes. Proposition \ref{pr2} gives a subtle way of checking the Lie-admissibility. We will use it in a crucial way in the study of para-K\"ahlerian Lie algebroids.
		\item A left symmetric algebroid is an anchored bundle with a right-anchored product whose curvature vanishes. However, it is important to keep in mind that the vanishing of the curvature is not a tensorial equation and  $(ii)$ of Proposition \ref{pr3} could be very useful in concrete situations.
		
	\end{enumerate}

\end{remark}

\paragraph{Some examples of left symmetric algebroids}\begin{enumerate}\item Any left symmetric algebra is obviously a left symmetric algebroid.\item Let $M$ be a smooth manifold. A right-anchored product on $(TM,M,\mathrm{id}_{TM})$ is just a linear connection, a Lie-admissible right-anchored product  on $(TM,M,\mathrm{id}_{TM})$ is just a torsion-free linear connection and a left symmetric product on $(TM,M,\mathrm{id}_{TM})$ is just a flat linear connection.
	\item Let $(S,.)$ be a left symmetric algebra, i.e, for any $a,b,c\in S$,
	\[ \mathrm{assoc}(a,b,c)=\mathrm{assoc}(b,a,c), \]where
	$\mathrm{assoc}(a,b,c)=(a.b).c-a.(b.c)$. It is know that $[a,b]=a.b-b.a$ is a Lie bracket on $A$. Let $\tau:S\too\mathcal{X}(M)$ an action of $(S,[\;,\;])$ on a smooth manifold $M$. Consider the anchored bundle $(A,M,\rho)$  where $A$ is the trivial bundle $M\times S$ and $\rho(m,a)=\tau(a)(m)$. Define on $\Ga(A)=C^\infty(M,S)$ the product $T$ by
	\[ T_{s_1}s_2=\mathcal{L}_{\rho(s_1)}s_2+s_1.s_2. \]
	By using $(ii)$ of Proposition \ref{pr3}, it is easy to check that $(A,M,\rho,T)$ is a left symmetric algebroid.	
	
	\item
	Let $\G\stackrel{\Ga}\too\X(M)$ be an action of a finite-dimensional real
	Lie algebra $\G$ on a smooth manifold $M$,
	i.e., a morphism of Lie algebras
	from $\G$ to the Lie algebra of vector fields
	on $M$.
	Let  $r\in\wedge^2\G$  be a solution of the classical Yang-Baxter equation, i.e.,
	$$[r,r]=0,$$where $[r,r]\in\G\wedge\G\wedge\G$ is defined
	by
	$$[r,r](\al,\be,\ga)=\al([r(\be),r(\ga)])+\be([r(\ga),r(\al)])+
	\ga([r(\al),r(\be)]),$$and $r:\G^*\too\G$ denotes also the linear
	map given by $\al(r(\be))=r(\al,\be).$
	We denote by $\pi^r$ the Poisson tensor on $M$ image of  $r$ by $\Ga$.
	Write
	$$r=\sum_{i,j}a_{ij}u_i\wedge u_j$$ and put, for
	$\al,\be\in\Om^1(M)$,
	$$\D^{r}_\al\be:=\sum_{i,j}a_{ij}\al(U_i)\mathcal{L}_{U_j}\be,$$where  $U_i=\Ga(u_i)$.
	We get a map $\D^r:\Om^1(M)\times\Om^1(M)\too\Om^1(M)$ which is a
	a right-anchored product on $(T^*M,M,\pi^r_{\#})$. It was proved in \cite{bouc3} that  
	$\D^r$ is Lie-admissible and left symmetric.

\end{enumerate}

We finish this section by an useful lemma.

\begin{Le}\label{saïd} Let $(A,M,\rho,T)$ a left symmetric algebroid such that $\rho(a)=0$ implies $T_a=0$. Then, for any $m\in M$ a regular point and $a\in A_m$, there exists an open set $U$ around $m$ and $s\in\Ga(U)$ such that $Ts=0$ and $s(m)=a$.
	
\end{Le}

\begin{proof} Denote by $q$ the rank of $\rho$ at $m$. According to the local splitting theorem near a regular point (see \cite{fer1} Theorem 1.1), there exists a coordinates system $(x_1,\ldots,x_q,y_1,\ldots,y_{n-q})$ around $m$ and a basis of sections $(a_1,\ldots,a_r)$ of $A$  such that
	\[ \left\{\begin{array}{lll}
	\rho(a_i)=\partial_{x_i},\;i=1,\ldots,q,\\
	\rho(a_i)=0,\;i=q+1,\ldots,r,\\
	\;[a_i,a_j]_A=\di\sum_{u=q+1}^rC_u^{ij}a_u.\end{array}      \right. \] Put $T_{a_i}a_j=\sum_{u=1}^r\Ga_{ij}^ua_u$. We look for $s=\sum_{j=1}^rf_ja_j$ satisfying $Ts=0$ and $s(m)=\sum_{i=1}^ra_i^0a_i$. Since by assumption $T_{a_i}=0$ for $i=q+1,\ldots,r$, this is equivalent to
	\begin{equation}\label{s} \frac{\partial f_j}{\partial x_i}=-\sum_{u=1}^rf_u\Ga_{iu}^j\esp f_j(m)=a_j^0,\;i=1,\ldots,q,j=1,\ldots,r. \end{equation}
	We thank of the $y_i$ as  parameters and we consider $\al:\R^q\too\R$ and, for $i=1,\ldots,q$, $F_i:\R^q\times\R^r\too\R^r$, given by
	\[ \al(x_1,\ldots,x_q)=(f_1(x,y),\ldots,f_r(x,y))\esp
	F_i(x,z)=-\left(\sum_{u=1}^rz_u\Ga_{iu}^j(x,y)\right)_{j=1}^r. \]
	Thus \eqref{s} is equivalent to
	\[ \frac{\partial \al}{\partial x_i}=F_i(x,\al(x)). \]According to a well-known theorem (see \cite{spivak} pp.187) these system of differential equations has solutions if
	\[ \frac{\partial F_j}{\partial x_i}
	-\frac{\partial F_i}{\partial x_j}+\sum_{u=1}^r
	\frac{\partial F_j}{\partial z_u}F_i^u-
	\sum_{u=1}^r
	\frac{\partial F_i}{\partial z_u}F_j^u=0,\; i,j=1,\ldots,q. \]
	Or, one can check easily by using that $T_{[a_i,a_j]}=0$ that this condition is equivalent to the vanishing of the curvature.
\end{proof}

\section{Para-K\"ahler Lie algebroids}\label{section3}
In this section, we give the definition of a para-K\"ahler Lie algebroid, its basic properties,  a characterization of a such structure and some examples. 

 \paragraph{Definition of para-K\"ahler Lie algebroid and its immediate consequences} Let $(A,M,\rho,\br_A)$ be a Lie algebroid.
 The Nijenhuis torsion of a bundle homomorphism  $H:A\too A$  is given by
 \begin{equation}\label{nui}N_H(a,b):=[Ha,Hb]_A-H[Ha,b]_A-H[a,Hb]_A+H^2[a,b]_A,
 \end{equation}for any $a,b\in\Ga(A)$. The following proposition is a generalization of a well-known fact in differential geometry (see Proposition 4.2 pp. 148 \cite{KN}).
 
 \begin{pr}\label{equi}
 	Let $(A,M,\rho,\prs)$ be a pseudo-Riemannian Lie algebroid and $K:A\too A$  a skew-symmetric bundle endomorphism such that $K^2=\mathrm{id}_A$. Define $\Om$ by $\Om(a,b)=\langle Ka,b\rangle$. Then the following assertions are equivalent:
 	\begin{enumerate}\item[$(i)$] $d_A\Om=0$ and $N_K=0$.
 		\item[$(ii)$] $\na K=0$, where $\na$ is the Levi-Civita $A$-connection associated to $\prs$.
 	\end{enumerate}
 	
 \end{pr}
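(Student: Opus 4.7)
The plan is to mimic the classical proof (Kobayashi--Nomizu, Proposition 4.2 of Chapter IX) almost verbatim, since the Koszul formula for the Levi-Civita $A$-connection, the Lie algebroid differential $d_A$, and the Nijenhuis torsion $N_K$ are all defined by the same formal expressions as in the manifold case, with $\rho(a)$ playing the role of a vector field acting on functions. First, I would record the elementary facts: because $K$ is skew-symmetric with respect to $\prs$ and $K^2=\mathrm{id}_A$, the bilinear form $\Om$ is genuinely skew-symmetric, and $\na$ being metric gives $\na\Om(a,b,c)=\langle(\na_aK)b,c\rangle+\langle Kb,\na_ac\rangle-\langle Kb,\na_ac\rangle=\langle(\na_aK)b,c\rangle$. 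In particular $\na K=0$ is equivalent to $\na\Om=0$.

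The easy direction $(ii)\Rightarrow(i)$ goes as follows. If $\na\Om=0$, then $d_A\Om$ is the $\mathrm{Alt}$ of $\na\Om$ (this holds because $\na$ is torsion free: the same proof that shows $d\alpha=\mathrm{Alt}(\na\alpha)$ for any torsion-free covariant derivative of a differential form on a manifold works here, using $[a,b]_A=\na_ab-\na_ba$), so $d_A\Om=0$. For the Nijenhuis torsion, expand
\[ N_K(a,b)=[Ka,Kb]_A-K[Ka,b]_A-K[a,Kb]_A+[a,b]_A, \]
replace each bracket using torsion-freeness, and then use $\na_a(Kb)=K\na_ab$ (the $\na K=0$ hypothesis) to see that every term cancels in pairs.

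For the harder direction $(i)\Rightarrow(ii)$ I would establish the identity
\[ 2\langle(\na_aK)b,c\rangle = d_A\Om(a,Kb,Kc)-d_A\Om(a,b,c)+\langle N_K(b,c),Ka\rangle, \]
by starting from the Koszul formula for $2\langle\na_a(Kb),c\rangle$ and $2\langle K\na_ab,c\rangle=-2\langle\na_ab,Kc\rangle$, subtracting, and rewriting each bracket $[\cdot,\cdot]_A$ appearing with a $K$ applied to exactly one slot via $[Kx,y]_A=K[x,y]_A-K[Kx,Ky]_A+[Kx,Ky]_A+$ (terms contributing to $N_K$), together with the expansion of $d_A\Om$ on the triples $(a,Kb,Kc)$ and $(a,b,c)$. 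The algebraic manipulations are identical to the classical ones because the derivation rule $\rho(a).\langle b,c\rangle=\langle\na_ab,c\rangle+\langle b,\na_ac\rangle$ and the Jacobi-identity-type rearrangements of brackets in $\Ga(A)$ used there are all valid in a Lie algebroid. Once this identity is in hand, the hypothesis $d_A\Om=0$ and $N_K=0$ gives $\langle(\na_aK)b,c\rangle=0$ for all sections $a,b,c$, and non-degeneracy of $\prs$ yields $\na K=0$.

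The main obstacle, therefore, is purely bookkeeping: carefully arranging the six Koszul terms for $2\langle\na_a(Kb),c\rangle+2\langle\na_ab,Kc\rangle$ so that the three $d_A\Om$ contributions and the four $N_K$ contributions appear cleanly, keeping track of the signs forced by the skew-symmetry of $K$ and of $\Om$. No genuine new difficulty arises from the anchor, since $\rho$ enters only through the derivations $\rho(a).f$ which obey the same Leibniz rule as $X.f$ on a manifold.
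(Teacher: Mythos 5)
Your proposal follows essentially the same route as the paper, which itself adapts Kobayashi--Nomizu's Proposition 4.2: establish a polarization identity expressing $2\langle(\na_aK)b,c\rangle$ as a combination of $d_A\Om$ and $N_K$ terms and read off both implications. The only caveat is that your candidate identity carries the signs of the complex case $J^2=-\mathrm{id}$; for $K^2=+\mathrm{id}$ the paper's version reads $2\langle(\na_aK)b,c\rangle=d_A\Om(a,b,c)+d_A\Om(a,Kb,Kc)+\langle N_K(c,Kb),a\rangle$, which is exactly the sign bookkeeping you already flagged as the remaining task and does not affect the conclusion.
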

 
 \begin{proof} We have, for any $a,b,c\in\Ga(A)$,
 	\begin{eqnarray*}
 		d_A\Om(a,b,c) 
 		&=&\langle \na_{a}(K){}b,{}c\rangle+\langle \na_{{}b}(K){}c,{}a\rangle+\langle \na_{{}c}(K){}a,{}b\rangle,\\
 		d_A\Om({}a,K{}b,K{}c)&=&\langle \na_{{}a}(K)K{}b,K{}c\rangle+\langle \na_{K{}b}(K)K{}c,{}a\rangle+\langle
 		\na_{K{}c}(K){}a,K{}b\rangle,\\
 		\langle
 		N_K({}c,K{}b),{}a\rangle&=&\langle\na_{K{}c}(K)(K{}b),{}a\rangle-\langle\na_{{}b}(K)({}c),{}a\rangle+\langle\na_{{}c}
 		(K)({}b),{}a\rangle-\langle\na_{K{}b} (K)(K{}c),{}a\rangle,
 	\end{eqnarray*}where $\na_{{}a}(K){}b=\na_a(K{}b)-K(\na_a{}b)$. These relations show that $(ii)$ implies $(i)$. Moreover,
 	 $K$ is skew-symmetric and hence
 	$$\langle \na_{{}a}(K){}b,{}c\rangle+\langle \na_{a}(K){}c,{}b\rangle=0.$$Thus
 	$$d_A\Om({}a,{}b,{}c)+d_A\Om({}a,K{}b,K{}c)+\langle
 	N_K({}c,K{}b),{}a\rangle=\langle \na_{{}a}(K){}b,{}c\rangle+\langle \na_{{}a}(K)K{}b,K{}c\rangle.$$
 	Now
 	\begin{eqnarray*}
 		\langle \na_{{}a}(K)K{}b,K{}c\rangle&=&\langle\na_a{}b,K{}c\rangle-\langle K\na_aK{}b,K{}c,\rangle\\
 		&=&\langle \na_aK{}b,{}c,\rangle-\langle K\na_a{}b,{}c\rangle=\langle \na_{{}a}(K){}b,{}c\rangle.
 	\end{eqnarray*}
 	Finally,
 	$$2\langle \na_{{}a}(K){}b,{}c\rangle=d_A\Om({}a,{}b,{}c)+d_A\Om({}a,K{}b,K{}c)+\langle
 	N_K({}c,K{}b),{}a\rangle$$and the proposition follows.
 \end{proof}

\begin{Def}\label{def1}
	A \emph{para-K\"ahler Lie algebroid} is a pseudo-Riemannian Lie algebroid $(A,M,\rho,\prs)$ endowed with
	a bundle isomorphism $K:A\too A$ satisfying
	$K^2=\mathrm{Id}_A$, $K$  is skew-symmetric with respect to $\prs$ and $\na K=0$, where $\na$ is the Levi-Civita $A$-connection of $\prs$ .\end{Def}

A para-K\"ahler Lie
algebroid $(A,M,\rho,\prs,K)$  carries a natural  bilinear skew-symmetric nondegenerate form
$\Om_K$ defined by
$\Om_K(a,b)=\langle Ka,b\rangle$. The following proposition is an immediate consequence of  Definition \ref{def1} and Proposition \ref{equi}.
\begin{pr}\label{prk} Let $(A,M,\rho,\prs,K)$ be para-K\"ahler Lie
	algebroid and $\na$ its Levi-Civita $A$-connection. Then:
	\begin{enumerate}\item[$(i)$] $(A,M,\rho,K)$ is a para-complex Lie algebroid, i.e., $K^2=\mathrm{Id}_A$, $N_K=0$ and, for any $x\in M$, $\dim\ker(K+\mathrm{Id}_A)(x)=
		\dim\ker(K-\mathrm{Id}_A)(x)$.
		\item[$(ii)$] $(A,M,\rho,\Om_K)$ is a symplectic Lie algebroid and hence $\pi=\rho(\Pi)$  is a Poisson tensor  on $M$, where $\Pi$ is the inverse of $\Om_K$.
		\item[$(iii)$] $A=A^+\oplus A^{-}$ where $A^+=\ker(K-\mathrm{Id}_{A})$ and
		$A^{-}=\ker(K+\mathrm{Id}_{A})$.
		\item[$(iv)$] $A^+$ and $A^{-}$ are  isotropic with respect to $\prs$ and 
		Lagrangian with 
		respect to $\Om_K$.
		\item[$(v)$] for
		any $a\in\Ga(A)$, $\na_a(\Ga(A^+))\subset \Ga(A^+)$ and $\na_a(\Ga(A^{-}))\subset \Ga(A^{-})$.
	\end{enumerate}\end{pr}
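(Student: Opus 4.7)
The plan is to prove the five statements largely in the order given, using only the defining identities $K^2=\mathrm{Id}_A$, $\langle Ka,b\rangle=-\langle a,Kb\rangle$, and $\na K=0$, together with the standard properties of $\na$ (metric and torsion-free). Most of the work has already been done in Proposition~\ref{equi}; the rest reduces to pointwise linear algebra performed fiberwise.

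First I would settle the splitting $(iii)$ and the eigenspace dimension claim in $(i)$. Since $K^2=\mathrm{Id}_A$ the formula $a=\tfrac12(a+Ka)+\tfrac12(a-Ka)$ gives the decomposition $A=A^+\oplus A^-$ fiberwise and globally as bundles (the rank of $K\pm\mathrm{Id}_A$ is upper semicontinuous and the sum is constant, so each is locally constant). For the isotropy/Lagrangian claim $(iv)$ and the dimension equality, I would observe that for $a,b\in A_x^+$ one has $\langle a,b\rangle=\langle Ka,Kb\rangle=-\langle a,K^2b\rangle=-\langle a,b\rangle$, hence $A^+$ is $\prs$-isotropic, and the same for $A^-$. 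Because $\prs$ is nondegenerate and $A^+$ and $A^-$ are both isotropic with $A=A^+\oplus A^-$, the restriction of $\prs$ to $A^+\times A^-$ is a nondegenerate pairing, which forces $\dim A^+_x=\dim A^-_x$. The statement that $A^\pm$ are $\Om_K$-Lagrangian is immediate: on $A^+$, $\Om_K(a,b)=\langle Ka,b\rangle=\langle a,b\rangle=0$, and on $A^-$, $\Om_K(a,b)=-\langle a,b\rangle=0$; maximality follows from the dimension count.

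Next, for $(i)$ and $(ii)$ I invoke Proposition~\ref{equi} directly: since $\na K=0$, the Nijenhuis torsion $N_K$ vanishes, so together with $K^2=\mathrm{Id}_A$ and the matched eigenspace dimensions just established, $(A,M,\rho,K)$ is a para-complex Lie algebroid. The same proposition yields $d_A\Om_K=0$; combined with the skew-symmetry $\Om_K(a,b)=\langle Ka,b\rangle=-\langle a,Kb\rangle=-\Om_K(b,a)$ and the fact that $\Om_K$ is nondegenerate (as $K$ is invertible and $\prs$ nondegenerate), this shows $(A,M,\rho,\br_A,\Om_K)$ is a symplectic Lie algebroid. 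The Poisson tensor $\pi=\rho(\Pi)$ is then produced by Proposition~\ref{prpoisson}.

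Finally, for $(v)$ I use $\na K=0$ in its strongest form: for any $a\in\Ga(A)$ and $s\in\Ga(A^\pm)$ (so $Ks=\pm s$) we get $K(\na_as)=\na_a(Ks)=\pm\na_as$, hence $\na_as\in\Ga(A^\pm)$. There is no real obstacle in this proposition; the only nontrivial input is the identity relating $d_A\Om$ to $\na K$ that is already proved in Proposition~\ref{equi}. The most delicate point to state carefully is the equality of ranks of $A^+$ and $A^-$, because $K$ being skew-symmetric with respect to an indefinite metric does not by itself force matched eigenspace dimensions; it is the combination of skew-symmetry, $K^2=\mathrm{Id}_A$ and the nondegeneracy of $\prs$ that does the job.
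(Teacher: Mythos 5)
Your proof is correct and takes essentially the same route the paper intends: the paper states this proposition without a written proof, calling it an immediate consequence of Definition \ref{def1} and Proposition \ref{equi} (the equivalence of $\na K=0$ with $d_A\Om_K=0$ and $N_K=0$), and your argument simply supplies the standard fiberwise linear algebra for the splitting, the isotropy, the equality of ranks via the nondegenerate pairing, and the parallelism of $A^\pm$. The only nitpick is that the rank of a bundle map is lower (not upper) semicontinuous, but this is immaterial since the constancy of $\dim A^\pm_x$ already follows from your nondegenerate-pairing argument.
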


The following proposition is the first important property of para-K\"ahler Lie
algebroids.

\begin{pr}\label{pr11}Let $(A,M,\rho,\prs,K)$ be a para-K\"ahler Lie algebroid then,
	for any $a^+,b^+\in\Ga(A^+)$, $a^-,b^-\in\Ga(A^-)$,
	\[ R(a^+,b^+)=R(a^-,b^-)=0\esp R(a^+,a^-)b^+-R(b^+,a^-)a^+=
	R(a^-,a^+)b^--R(b^-,a^+)a^-=0, \]
	where $R$ is the curvature of the Levi-Civita connection $\na$. In particular, for $\e=\pm$, the restriction of $\na$ to $A^\e$ induces on $(A^\e,M,\rho_{|A^\e})$  a left symmetric  algebroid structure.
\end{pr}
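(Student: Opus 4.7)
The plan is to exploit two purely algebraic symmetries of the curvature $R$ of the Levi-Civita $A$-connection $\nabla$, both of which carry over from the classical pseudo-Riemannian setting to the Lie algebroid setting because their proofs use only metricity, torsion-freeness, and the skew-symmetry $R(a,b)=-R(b,a)$: namely, the metric identity $\langle R(a,b)c,d\rangle=-\langle R(a,b)d,c\rangle$ and, derived from it together with the first Bianchi identity $R(a,b)c+R(b,c)a+R(c,a)b=0$, the pair-swap symmetry $\langle R(a,b)c,d\rangle=\langle R(c,d)a,b\rangle$. Combined with Proposition \ref{prk}, which supplies the $\nabla$-invariant decomposition $A=A^+\oplus A^-$ into subbundles that are isotropic for $\langle\,,\,\rangle$, these symmetries are enough to force all the stated vanishings.

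First I would prove $R(a^+,b^+)=0$ for $a^+,b^+\in\Gamma(A^+)$. Pick $c^+\in\Gamma(A^+)$; by Proposition \ref{prk}(v), $R(a^+,b^+)c^+\in\Gamma(A^+)$. Since $\langle\,,\,\rangle$ is nondegenerate and $A=A^+\oplus A^-$, it suffices to show this section pairs trivially with every $d\in\Gamma(A^+)\cup\Gamma(A^-)$. Pairing with $d^+\in\Gamma(A^+)$ gives $0$ by isotropy of $A^+$. Pairing with $d^-\in\Gamma(A^-)$ and applying the pair-swap symmetry,
\[ \langle R(a^+,b^+)c^+,d^-\rangle=\langle R(c^+,d^-)a^+,b^+\rangle, \]
and the right-hand side vanishes because $R(c^+,d^-)a^+\in\Gamma(A^+)$ by invariance and $b^+\in\Gamma(A^+)$, again an isotropic pair. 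Hence $R(a^+,b^+)c^+=0$. The same two-step argument applied to $R(a^+,b^+)d^-\in\Gamma(A^-)$ (using isotropy of $A^-$ and invariance of $A^-$) shows $R(a^+,b^+)d^-=0$, so $R(a^+,b^+)=0$ on all of $A$. The case $R(a^-,b^-)=0$ is strictly symmetric.

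The two mixed identities are then immediate from Bianchi. Applied to the triple $(a^+,b^+,a^-)$, the first Bianchi identity reads $R(a^+,b^+)a^-+R(b^+,a^-)a^++R(a^-,a^+)b^+=0$; the first term vanishes by the previous step, so $R(a^+,a^-)b^+=R(b^+,a^-)a^+$, which is the first claim; the second follows analogously from $(a^-,b^-,a^+)$. Finally, for the left symmetric algebroid structure, the restriction of $\nabla$ to $\Gamma(A^\epsilon)$ is, by invariance, a right-anchored product on the anchored bundle $(A^\epsilon,M,\rho_{|A^\epsilon})$, and its curvature as such a product is exactly the restriction of $R$ to $A^\epsilon$-sections, which is zero by the first step; Proposition \ref{pr3} then concludes. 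I do not foresee any serious obstacle; the only delicate point is recognizing that the pair-swap symmetry is precisely what lets one convert a pairing between $A^+$ and $A^-$ into a pairing inside $A^+$, where isotropy does the rest.
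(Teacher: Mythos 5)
Your proof is correct, and it rests on the same ingredients as the paper's: the $\nabla$-invariance and isotropy of the splitting $A=A^+\oplus A^-$ from Proposition \ref{prk}, the first Bianchi identity, and the metric skew-symmetry of $R$. The one genuine difference is in how you kill $R(a^+,b^+)$: you invoke the classical pair-swap symmetry $\langle R(a,b)c,d\rangle=\langle R(c,d)a,b\rangle$ (correctly observing that its purely algebraic derivation from Bianchi and metricity transfers verbatim to the Lie algebroid setting, since the Jacobiator of a Lie algebroid bracket vanishes), whereas the paper applies Bianchi once to the mixed triple $(a^+,b^+,a^-)$ and reads off $R(a^+,b^+)a^-=0$ together with the mixed identity $R(b^+,a^-)a^++R(a^-,a^+)b^+=0$ directly from the direct-sum decomposition, then finishes $R(a^+,b^+)c^+=0$ with a single use of metric skew-symmetry and isotropy. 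The paper's route is shorter because one application of Bianchi yields two of the stated conclusions simultaneously; your route is slightly longer (the pair-swap symmetry itself consumes several instances of Bianchi) but leans on a standard, reusable identity and makes the role of isotropy very transparent. Your treatment of the final claim --- that the restricted product is right-anchored on $(A^\e,M,\rho_{|A^\e})$ with curvature equal to the restriction of $R$, hence vanishing --- matches the paper's implicit argument and is complete.
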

\begin{proof}According to Bianchi's identity \eqref{bianchi}
	\[ R(a^+,b^+)a^-+R(b^+,a^-)a^++R(a^-,a^+)b^+=0. \]
	Since $\na\Ga(A^\e)\subset\Ga(A^\e)$ we get $R(a^+,b^+)a^-\in\Ga(A^-)$ and
	$R(b^+,a^-)a^++R(a^-,a^+)b^+\in\Ga(A^+)$ and hence
	\[ R(a^+,b^+)a^-=R(b^+,a^-)a^++R(a^-,a^+)b^+=0. \]
	Now, for any $c^+\in\Ga(A^+)$,
	\[ \langle R(a^+,b^+)a^-,c^+\rangle=-\langle a^-,R(a^+,b^+)c^+\rangle=0. \]
	So $R(a^+,b^+)=0$. In the same way we can show that $R(a^-,b^-)=0$ and
	$R(b^-,a^+)a^-+R(a^+,a^-)b^-=0$. This completes the proof.
\end{proof}

\paragraph{Para-K\"ahler Lie algebroids as phase spaces}
Let $(A,M,\rho,\prs,K)$ be a para-K\"ahler Lie algebroid. As an anchored bundle,  $(A,M,\rho)=(A^+\oplus A^-,M,\rho^+\oplus\rho^-)$ where $\rho^\e=\rho_{|A^\e}$.  The map $\flat:A^{-}\too (A^{+})^*,$  $a\mapsto a^*,$ where $a^*$ is given by
$\prec a^*,b\succ=\langle a,b\rangle,$  realizes a bundle isomorphism between 
$A^{-}$ and $(A^{+})^*$. According to Proposition \ref{pr11},
the restriction of the Levi-Civita connection $\na$ to $A^+$ say $S$ induces on $(A^+,M,\rho^+)$ a structure of left symmetric algebroid. The same thing happens for $A^-$ and by the identification above we get a left symmetric right-anchored product $T$ on $((A^{+})^*,M,\rho_1)$ where $\rho_1=\rho\circ\flat^{-1}$.
Thus we can identify $(A,M,\rho,\prs,K)$ with the phase space $(\Phi(A^+),\rho^+\oplus\rho_1,\prs_0,,K_0)$ and,  under this identification, the Levi-Civita connection $\na^0$  is entirely determined by $S$ and $T$. Namely, for any $a,b\in\Ga(A^+)$ and $u,v\in\Ga((A^+)^*)$, we have
\[ \na_a^0b=S_ab,\; \na_u^0v=T_uv,\; \na_a^0u=S_a^*u\esp \na_u^0a=T_u^*a, \]where $S^*$ and $T^*$ are the dual of $S$ and $T$ given by \eqref{dual}. 
Moreover, since $\rho^+\oplus\rho_1:\Ga(\Phi(A^+))\too TM$ is a Lie algebra homomorphism, we have
\[[\rho^+(a),\rho^+(b)]=\rho^+(S_ab-S_ba),\;[\rho_1(u),\rho_1(v)]=\rho_1(T_uv-T_vu) \esp[\rho^+(a),\rho_1(u)]=\rho_1(S_a^*u)-\rho^+(T_u^*a), \]
for any $a,b\in\Ga(A^+)$ and $u,v\in\Ga((A^+)^*)$.

\paragraph{How to build a para-K\"ahler Lie algebroid from two left symmetric algebroid structures on two dual vector bundles}
Let $(B,M,\rho_0,S)$ and $(B^*,M,\rho_1,T)$ be two left symmetric algebroid structures.
We extend the right-anchored products on $(B,M,\rho_0)$ and $(B^*,M,\rho_1)$ to
$(\Phi(B),M,\rho_0\oplus\rho_1)$
by putting,
for any $X,Y\in \Ga(B)$ and for any $\al,\be\in \Ga(B^*)$,
\begin{equation}\label{eq16}\na_{X+\al}(Y+\be)=S_XY
+T_\al^*Y+S_X^*\be+T_\al\be.\end{equation} 
The anchored bracket associated to $\na$ is given by
\begin{equation}\label{eq16b}[X+\al,Y+\be]_\phi=[X,Y]_B+[\al,\be]_{B^*}+
T_\al^*Y-T_\be^*X+S_X^*\be-S_Y^*\al.
\end{equation} The question now is under which conditions $\na$ is Lie-admissible or equivalently 
$[\;,\;]_\phi$ is a Lie bracket. It is a crucial step in our study and we will use
Proposition \ref{pr2} to get an answer which will turn out te be very useful, particularly, in the next section.

\begin{pr}\label{pr13}With the hypothesis above, the following assertions are equivalent:
	\begin{enumerate}
		\item[$(i)$] The right-anchored product on $(\Phi(B),M,\rho_0\oplus\rho_1)$ given   by \eqref{eq16} is Lie-admissible.
		\item[$(ii)$] For any $X,Y\in\Ga(B)$ and $\al,\be\in\Ga(B^*)$,
		\begin{equation}\label{eq190}R^\na(X,\al)Y=R^\na(Y,\al)X\esp
		R^\na(\al,X)\be=R^\na(\be,X)\al.
		\end{equation}
		
		\item[$(iii)$] For any $x\in M$ there exists an open set $U$ containing $x$ and a basis of sections $(a_1,\ldots,a_n)$ of $B$ over $U$ such that, for any $1\leq i,j,k\leq n,$
		\begin{equation}\label{eq19}R^\na(a_i,\al_k)a_j=R^\na(a_j,\al_k)a_i,\;
		R^\na(\al_i,a_k)\al_j=R^\na(\al_j,a_k)\al_i\esp [\rho_0(a_i),\rho_1(\al_j)]=\rho_1(S_{a_i}^*\al_j)-\rho_0(T_{\al_j}^*a_i),
		\end{equation}
		where $(\al_1,\ldots,\al_n)$ is the dual basis of $(a_1,\ldots,a_n)$ and $R^\na$ is the curvature of $\na$.\end{enumerate}
\end{pr}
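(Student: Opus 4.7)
The plan is to apply Proposition \ref{pr2} to the right-anchored product $\na$ on $(\Phi(B),M,\rho_0\oplus\rho_1)$ using the local frame $(a_1,\ldots,a_n,\al_1,\ldots,\al_n)$ of $\Phi(B)$ obtained from a local frame $(a_i)$ of $B$ together with its dual frame $(\al_j)$. The whole argument rests on the observation that the only components of $R^\na$ that can be nontrivial are the ``mixed'' ones (with one vector-type and one covector-type argument in the first two slots), so that both the Bianchi-type cyclic sums and the torsion vanishings required by Proposition \ref{pr2} collapse to precisely the three equations of \eqref{eq19}.

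First I would compute $R^\na$ on pure pieces. Using \eqref{eq16}, for $X,Y,Z\in\Ga(B)$ one has $\na_XY=S_XY$ and $[X,Y]_\phi=[X,Y]_B$, so $R^\na(X,Y)Z=R^S(X,Y)Z=0$; the analogous dualization via \eqref{dual} yields $R^\na(X,Y)\be=-(R^S(X,Y))^*\be=0$. By symmetry $R^\na(\al,\be)\ga=R^T(\al,\be)\ga=0$ and $R^\na(\al,\be)X=0$. Hence only curvatures of the form $R^\na(X,\al)\cdot$ can survive. For the torsion, $\tau_{[\cdot,\cdot]_\phi}(a_i,a_j)=0$ and $\tau_{[\cdot,\cdot]_\phi}(\al_i,\al_j)=0$ automatically because $S$ and $T$ are Lie-admissible on $B$ and $B^*$, while a direct computation using \eqref{eq16b} gives $\tau_{[\cdot,\cdot]_\phi}(a_i,\al_j)=\rho_1(S_{a_i}^*\al_j)-\rho_0(T_{\al_j}^*a_i)-[\rho_0(a_i),\rho_1(\al_j)]$.

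With these reductions, Proposition \ref{pr2} applied to the basis $(a_i,\al_j)$ yields $(i)\Leftrightarrow(iii)$: triples entirely inside $\{a_i\}$ or inside $\{\al_j\}$ give trivial Bianchi sums; triples of type $(a_i,a_j,\al_k)$ reduce, after discarding the zero term $R^\na(a_i,a_j)\al_k$, to $R^\na(a_j,\al_k)a_i-R^\na(a_i,\al_k)a_j$; triples $(\al_i,\al_j,a_k)$ yield the dual symmetry; and the remaining torsion vanishing is exactly the last equation of \eqref{eq19}. For $(ii)\Leftrightarrow(iii)$: once $(i)$ is known, $\tau_{[\cdot,\cdot]_\phi}=0$ globally, hence $R^\na$ is $C^\infty(M)$-trilinear by \eqref{curvature}, so the basis-level symmetries of (iii) extend tensorially to the global symmetries of (ii); conversely, (ii) restricted to the basis, together with the cyclic identities from Bianchi \eqref{bianchi}, gives (iii).

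The main technical obstacle I expect is the careful derivation of the mixed-curvature template, typically of the shape $R^\na(X,\al)Y=S_X(T_\al^*Y)-T_\al^*(S_XY)-T_{S_X^*\al}^*Y+S_{T_\al^*X}Y$, which relies on delicate bookkeeping with the dual-connection formula \eqref{dual} and on the identity $\rho_0([X,Y]_B)=[\rho_0(X),\rho_0(Y)]$; getting the signs right in these dualizations, and then matching them to the cyclic sums prescribed by Proposition \ref{pr2}, is where the actual computational work lies.
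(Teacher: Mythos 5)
Your proof is correct and follows essentially the same route as the paper: the paper's own (one-line) proof invokes Proposition \ref{pr2} together with exactly the facts you establish, namely the vanishing of $R^\na$ on pure pairs, the vanishing of the pure torsions, and the formula for the mixed torsion $\tau_{\br_\phi}(X,\al)=\rho_1(S_X^*\al)-\rho_0(T_\al^*X)-[\rho_0(X),\rho_1(\al)]$. The only difference is that you spell out the reduction of the cyclic Bianchi sums and the tensoriality argument linking $(ii)$ and $(iii)$, which the paper leaves implicit.
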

\begin{proof} It is a consequence of Proposition \ref{pr2} and the facts that
	\[ R^\na(X,Y)=0,\; R^\na(\al,\be)=0,\;\tau_{\br_\phi}(X,Y)=\tau_{\br_\phi}(\al,\be)=0\esp
	\tau_{\br_\phi}(X,\al)=\rho_1(S_{X}^*\al)-\rho_0(T_{\al}^*X)-
	[\rho_0(X),\rho_1(\al)] \]
	for any $X,Y\in\Ga(B)$ and $\al,\be\in \Ga(B^*)$.
\end{proof}

\begin{Def}
	Two left symmetric  algebroids  $(B,M,\rho_0,S)$ and $(B^*,M,\rho_1,T)$ satisfying \eqref{eq190} or  \eqref{eq19} will be
	called \emph{Lie-extendible} or compatible.\end{Def}
Thus we get the following result.
\begin{theo}\label{theoextendible}
	Let $(B,M,S,\rho_0)$ and $(B^*,M,T,\rho_1)$ two Lie-extendible left symmetric  algebroids. Then $(\Phi(B),M,\rho_0\oplus\rho_1,\prs_0,K_0)$ endowed with the Lie algebroid bracket  given by \eqref{eq16b} is a para-K\"ahler Lie algebroid. Moreover, all para-K\"ahler Lie algebroids are obtained in this way.
	
\end{theo}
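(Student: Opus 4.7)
For the direct implication, the strategy is to recognise $\na$ defined by \eqref{eq16} as the Levi-Civita connection of $\prs_0$ on $\Phi(B)$; the parallelism of $K_0$ will then follow from the fact that $\na$ visibly preserves the splitting $\Phi(B)=B\oplus B^*$. I first invoke the Lie-extendibility hypothesis together with Proposition \ref{pr13} to conclude that $[\;,\;]_\phi$ from \eqref{eq16b} is a Lie algebroid bracket on $(\Phi(B),M,\rho_0\oplus\rho_1)$. I next check that $\na$ is torsion-free, which reduces immediately to $[X,Y]_B=S_XY-S_YX$ and $[\al,\be]_{B^*}=T_\al\be-T_\be\al$ together with the definition of $[\;,\;]_\phi$. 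I then verify that $\na$ is metric for $\prs_0$: this expands into four identities of the shape $\prec S_X^*\be,Y\succ+\prec\be,S_XY\succ=\rho_0(X).\prec\be,Y\succ$ and the analogues involving $T$ and $T^*$, each of which is precisely the definition of the dual connections in \eqref{dual}. Uniqueness of the Levi-Civita connection then gives $\na=\na^{\mathrm{LC}}$, after which $\na K_0=0$ because $K_0=\mathrm{Id}$ on $B$ and $-\mathrm{Id}$ on $B^*$ while $\na$ preserves each summand. Combined with $K_0^2=\mathrm{Id}$ and the evident skew-symmetry of $K_0$ with respect to $\prs_0$, this completes the direct direction.

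For the converse, I start with a para-K\"ahler Lie algebroid $(A,M,\rho,\prs,K)$ and apply Proposition \ref{prk} to split $A=A^+\oplus A^-$ into isotropic Lagrangian subbundles. The metric pairing gives a bundle isomorphism $\flat:A^-\too(A^+)^*$, $a\mapsto\langle a,\cdot\rangle$. Setting $B=A^+$ and transferring structures via $\flat$, the isotropy of $A^\pm$ ensures that $\prs$ corresponds to $\prs_0$ and $K$ to $K_0$. Proposition \ref{pr11} equips $B$ and $B^*$ with left symmetric algebroid structures $S$ and $T$ via the restricted Levi-Civita connection. The formulas for the Levi-Civita connection in terms of $S$ and $T$ recorded in the ``phase spaces'' paragraph coincide with \eqref{eq16}, and torsion-freeness then identifies $[\;,\;]_A$ with $[\;,\;]_\phi$ as in \eqref{eq16b}. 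Since $[\;,\;]_A$ satisfies the Jacobi identity, Proposition \ref{pr13} applied in the opposite direction forces the compatibility \eqref{eq190}, so $(S,T)$ is Lie-extendible.

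The main technical effort is the metric computation for $\na$; it is routine once the dualities from \eqref{dual} are applied systematically, but it is where all the pieces must fit together simultaneously. The remaining content of the theorem is essentially a clean synthesis of the structural results in Propositions \ref{prk}, \ref{pr11}, and \ref{pr13}.
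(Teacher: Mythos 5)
Your proposal is correct and follows essentially the same route as the paper: the forward direction is Proposition \ref{pr13} plus the observation that the product \eqref{eq16} is torsion-free and metric for $\prs_0$ (hence is the Levi-Civita connection) and preserves the splitting $B\oplus B^*$, so $\na K_0=0$; the converse is exactly the paper's "phase spaces" paragraph combined with Propositions \ref{prk} and \ref{pr11}. You merely write out explicitly the metric-compatibility check that the paper leaves implicit.
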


\paragraph{A subtlety of compatible left symmetric algebroids}
The compatibility between two left symmetric  algebroids has a subtle property we will point out now.\\
Let $(B,M,\rho_0,S)$ and $(B^*,M,\rho_1,T)$ be two left symmetric algebroid structures. They are compatible if \eqref{eq190} holds.
Note first that, for any $X,Y\in\Ga(B)$ and $\al,\be\in \Ga(B^*)$, 
\begin{equation}\label{eq17}R^\na(X,\al)Y=[S_X,T_\al^*]Y
+S_{T^*_\al X}Y-T_{S^*_X\al}^*Y\esp
R^\na(\al,X)\be=[T_\al,S_X^*]\be
+T_{S^*_X \al}\be-S_{T^*_\al X}^*\be.
\end{equation}
On the other hand, the equation \eqref{eq190} is not tensorial  and it is equivalent to the vanishing of the Jacobiator of the bracket $[\;,\;]_\phi$ given by \eqref{eq16b}. We have seen that the vanishing of the Jacobiator implies the vanishing of the torsion. Let compute the torsion of $[\;,\;]_\phi$. Since the torsions of $[\;,\;]_S$ and $[\;,\;]_T$ vanish, we have for any $X,Y\in\Ga(B)$, $\al,\be\in\Ga(B^*)$,
\[ \tau_{[\;,\;]_\phi}(X,Y)=0\esp \tau_{[\;,\;]_\phi}(\al,\be)=0. \]Moreover,
\begin{equation}\label{eqtau} \tau_{S,T}(X,\al):=\tau_{[\;,\;]_\phi}(X,\al)=[\rho_0(X),\rho_1(\al)]-\rho_1(S_{X}^*\al)+\rho_0(T_{\al}^*X). \end{equation}
Then $\tau_{[\;,\;]_\phi}=0$ if and only if the tensor field $\tau_{S,T}\in\Ga(B^*\otimes B\otimes TM)$ vanishes. By using Bianchi's identity, we get for any $X,Y\in\Ga(B)$ and $\al,\be\in \Ga(B^*)$,
\[ R^\na(X,\al)Y-R^\na(Y,\al)X=J_{[\;,\;]_\phi}(X,\al,Y)\esp
R^\na(\al,X)\be-R^\na(\be,X)\al=J_{[\;,\;]_\phi}(\al,X,\be). \]
Thus if  $\tau_{S,T}$ vanishes then $\rho_0\oplus\rho_1$ is a Lie algebras homomorphism and  hence $\rho_0\oplus\rho_1(J_{[\;,\;]_\phi})=0$. So  we get from Bianchi's identity that
\begin{equation}\label{eq190bis}\rho_0(R^\na(X,\al)Y)=\rho_0(R^\na(Y,\al)X)\esp
\rho_1(R^\na(\al,X)\be)=\rho_1(R^\na(\be,X)\al).\end{equation}So we get the following proposition.
\begin{pr}\label{pr34} Let $(B,M,\rho_0,S)$ and $(B^*,M,\rho_1,T)$ be two left symmetric algebroid structures. Then the following assertions hold:
	\begin{enumerate}\item[$(i)$] If $\rho_0$ and $\rho_1$ are into then $(B,M,S,\rho_0)$ and $(B^*,M,T,\rho_1)$ are Lie-extendible if and only if $\tau_{S,T}=0$.
		\item[$(ii)$] If $\rho_0$ is  into then the two left symmetric structures are Lie-extendible if and only if, 
		for any $X\in\Ga(B)$ and $\al,\be\in \Ga(B^*)$,
		$$ R^\na(\al,X)\be=R^\na(\be,X)\al\esp \tau_{S,T}=0.  $$
		\item[$(iii)$] If $\rho_0$ is an isomorphism and the two left symmetric structures are Lie-extendible then,
		 for any $X\in\Ga(B)$ and $\al\in \Ga(B^*)$,
		\[ T_{\al}^*X=\mathrm{s}(S_{X}^*\al)-[X,\mathrm{s}(\al)]_B, \]
		where $\mathrm{s}=\rho_0^{-1}\circ\rho_1:B^*\too B$.  
		
	\end{enumerate}

\end{pr}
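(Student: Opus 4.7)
The three statements all rest on the machinery assembled immediately above the proposition, so I would simply organise that machinery carefully. Three preliminary observations are crucial: (a) Lie-extendibility implies the vanishing of the Jacobiator $J_{[\;,\;]_\phi}$, hence of the torsion, and in particular of its only non-trivial component $\tau_{S,T}$ given by \eqref{eqtau}; (b) once $\tau_{S,T}=0$, the map $\rho_0\oplus\rho_1:\Ga(\Phi(B))\to\X(M)$ becomes a Lie algebras homomorphism, so applying it to Bianchi's identity yields the relations \eqref{eq190bis}; (c) by \eqref{eq17}, $R^\na(X,\al)Y$ takes values in $\Ga(B)$ and $R^\na(\al,X)\be$ in $\Ga(B^*)$, so the two equations in \eqref{eq190} live in disjoint summands of $\Phi(B)$ and can be handled separately.

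For (i), assume $\rho_0$ and $\rho_1$ are injective and $\tau_{S,T}=0$. By (b) we have \eqref{eq190bis}; fibrewise injectivity of $\rho_0$ upgrades its first relation to the first equation of \eqref{eq190}, and fibrewise injectivity of $\rho_1$ does the same for the second. Proposition \ref{pr13} then yields Lie-extendibility. The converse direction is exactly observation (a). For (ii), with only $\rho_0$ injective, the same argument upgrades only the first relation of \eqref{eq190bis}; the second equation of \eqref{eq190} must therefore be imposed as a separate hypothesis, after which Proposition \ref{pr13} again applies. The converse once more follows from (a) combined with the first equation of \eqref{eq190}.

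For (iii), set $\mathrm{s}:=\rho_0^{-1}\circ\rho_1$, so that $\rho_1=\rho_0\circ\mathrm{s}$. Lie-extendibility forces $\tau_{S,T}=0$ by (a), which in view of \eqref{eqtau} reads
\[ \rho_0(T_\al^*X)=\rho_1(S_X^*\al)-[\rho_0(X),\rho_1(\al)]=\rho_0(\mathrm{s}(S_X^*\al))-[\rho_0(X),\rho_0(\mathrm{s}(\al))]. \]
Since $(B,M,\rho_0,[\;,\;]_B)$ is a Lie algebroid, $\rho_0:\Ga(B)\to\X(M)$ is a Lie algebras homomorphism, so the bracket on the right becomes $\rho_0([X,\mathrm{s}(\al)]_B)$; fibrewise invertibility of $\rho_0$ then cancels it and delivers the announced formula. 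No real obstacle arises here: the argument is essentially a bookkeeping exercise, the only subtlety being the preliminary remark that Lie-extendibility automatically enforces $\tau_{S,T}=0$, without which \eqref{eq190bis} could not be invoked.
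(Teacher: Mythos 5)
Your proof is correct and follows essentially the same route as the paper: the paper establishes Proposition \ref{pr34} precisely through the preceding discussion (the identity $\tau_{[\;,\;]_\phi}=0\Leftrightarrow\tau_{S,T}=0$, the Bianchi identities expressing $J_{[\;,\;]_\phi}$ in terms of $R^\na$, and the derivation of \eqref{eq190bis} from the anchor being a Lie algebra homomorphism), and your three observations (a)--(c) together with the injectivity upgrades and the substitution $\rho_1=\rho_0\circ\mathrm{s}$ reproduce exactly that argument.
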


The general case of $(iii)$ in the proposition above will be studied in the next section devoted to the notion of exact para-K\"ahler Lie algebroids which generalizes the notion of exact para-K\"ahler Lie algebras introduced in \cite{bai} and studied in more details in \cite{bouben}.

 It is important to point out that two compatible left symmetric  algebroids give rise to a symmetric bivector field and a Poisson structure on the underlying manifold. Indeed,  if $(B,M,S,\rho_0)$ and $(B^*,M,T,\rho_1)$ are two Lie-extendible left symmetric  algebroids then $(\Phi(B),M,\rho_0\oplus\rho_1,\Om_0)$ is a para-K\"ahler  Lie algebroid and hence there exists a symmetric bivector field $h$ and a  Poisson tensor $\pi$ on $M$ given by \eqref{metric} and \eqref{poisson}, respectively. One can see easily that $h_\#, \pi_\#:T^*M\too TM$ associated to $h$ and  $\pi$ are given by
 \begin{equation}\label{poisson1}h_\#=\rho_1\circ\rho_0^*+\rho_0\circ\rho_1^*\esp
 \pi_\#=\rho_1\circ\rho_0^*-\rho_0\circ\rho_1^*.
 \end{equation}
 
 \begin{exem}\label{exemple1} 
 	Let $(A,M,S,\rho)$ be a left symmetric algebroid. Then the left symmetric product on $A$ and the trivial left symmetric product on $A^*$ together with the trivial anchor are Lie-extendible so $(\Phi(A),M,\rho\oplus0,\prs_0,K_0)$ endowed with the Lie algebra bracket associated to the left symmetric product
 	\begin{equation}\label{exem1eq1} \na_{X+\al}^0(Y+\be)=S_XY+S_X^*\be \end{equation}is a para-K\"ahler Lie algebra. We denote by $[\;,\;]^\triangleright$ the Lie bracket associated to $\na^0$. We have 
 	\[ [X+\al,Y+\be]^\triangleright=[X,Y]+S_X^*\be-S_Y^*\al. \]
 	Moreover, it is easy to check that $(\Phi(A),[\;,\;]^\triangleright,\prs_0)$ is a flat pseudo-Riemannian  Lie algebroid
 	and $(\Phi(A),M,[\;,\;]^\triangleright,\Om_0)$ is a  symplectic Lie algebroid and $\Om_0$ is parallel with respect to $\na^0$.
 \end{exem}
 
\section{Exact para-K\"ahler Lie algebroids }\label{section4}

In this section, we introduce the notion of exact para-K\"ahler Lie algebroids which generalizes exact para-K\"ahler Lie algebras introduced in \cite{bai} and studied in more details in \cite{bouben}.

Let $(A,M,\rho,S)$ be a left symmetric algebroid,  $\mathrm{r}\in\Ga(A\otimes A)$ and $\mathrm{r}=\mathfrak{a}+\mathfrak{s}$ the decomposition of $\mathrm{r}$ into skew-symmetric and symmetric part. We denote by $\mathrm{r}_\#:A^*\too A$ the bundle homomorphism given by $\be(\mathrm{r}_\#(\al))=\mathrm{r}(\al,\be)$.
Put $\rho_\mathrm{r}=\rho\circ\mathrm{r}_\#$ and, for any $\al,\be\in\Ga(A^*)$ and $X\in\Ga(A)$,
\begin{equation}\label{produitr}
\prec T_\al\be,X\succ:=\rho(X).\mathrm{r}(\al,\be)-\mathrm{r}(S_X^*\al,\be)-\mathrm{r}(\al,S_X^*\be)+\prec S_{\mathrm{r}_\#(\al)}^*\be,X\succ=S_X\mathrm{r}(\al,\be)+\prec S_{\mathrm{r}_\#(\al)}^*\be,X\succ.
\end{equation}
It is clear that $T$ is a right-anchored product on $(A^*,M,\rho_\mathrm{r})$.
Let $\na$ be the extension of $S$ and $T$ on  $(\Phi(A),M,\rho\oplus\rho_\mathrm{r})$ given by \eqref{eq16}.\begin{problem}\label{problem1}
Under which conditions on $(A,M,\rho,\mathrm{r})$ is $(A^*,M,\rho_\mathrm{r},T)$  a left symmetric algebroid with $(A,M,\rho,S)$ and $(A^*,M,\rho_\mathrm{r},T)$ being compatible? \end{problem}
Recall that $(A^*,M,\rho_\mathrm{r},T)$ is  a left symmetric algebroid with $(A,M,\rho,S)$ and $(A^*,M,\rho_\mathrm{r},T)$ being compatible if and only if
 of the curvature of $T$ vanishes and,
	for any  $\al,\be\in\Ga(A^*)$, $X,Y\in\Ga(A)$,  
	$$R^\na(X,\al)Y=R^\na(Y,\al)X\esp R^\na(\al,X)\be=R^\na(\be,X)\al.$$
Remark first that if the curvature of $T$ vanishes  then $T$ is Lie-admissible and hence $\rho_\mathrm{r}:\Ga(A^*)\too\mathcal{X}(M)$ is a Lie algebra homomorphism, i.e., $\rho_\mathrm{r}([\al,\be]_T)=[\rho_\mathrm{r}(\al),\rho_\mathrm{r}(\be)]$. This can be written
\begin{equation}\label{eqcn}
\rho\left(\De(r)(\al,\be)\right)=0,
\end{equation}where
\begin{equation}
\De(r)(\al,\be)={\mathrm{r}_\#([\al,\be]_T)}-[\mathrm{r}_\#(\al),\mathrm{r}_\#(\be)]_S.
\end{equation}	We have $\De(r)\in\Ga(A\otimes A\otimes A)$ and the equation \eqref{eqcn} is tensorial. The following theorem gives an answer to  Problem \ref{problem1}.
	
\begin{theo}\label{theo1}Let $(A,M,\rho,S)$ be a left symmetric algebroid and $\mathrm{r}=
	\mathfrak{a}+\mathfrak{s}\in\Ga(A\otimes A)$. Then $(A^*,M,\rho_{\mathrm{r}},T)$ is a left symmetric algebroid with $(A,M,\rho,S)$ and $(A^*,M,\rho_\mathrm{r},T)$ being compatible if and only if, for any $\al,\be\in\Ga(A^*)$ and $X\in\Ga(A)$,
	\begin{equation}\label{exact}\rho\circ\De(r)=0,\; S^2\mathfrak{a}=0\esp
	Q_X\De(r)(\al,\be):=[X,\De(r)(\al,\be)]_S-\De(r)(S_X^*\al,\be)-\De(r)(\al,S_X^*\be)=0.\end{equation}
	
\end{theo}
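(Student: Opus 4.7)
The plan is to translate, one by one, the defining requirements for $(A^*,M,\rho_\mathrm{r},T)$ to be a left symmetric algebroid compatible with $(A,M,\rho,S)$ into tensorial equations on $\mathrm{r}$. By Propositions \ref{pr3} and \ref{pr13} applied to the phase space $(\Phi(A),M,\rho\oplus\rho_\mathrm{r})$, these requirements split into three pieces: (a) the curvature $R^T$ vanishes and $[\;,\;]_T$ is Lie-admissible; (b) $R^\na(X,\al)Y=R^\na(Y,\al)X$; and (c) $R^\na(\al,X)\be=R^\na(\be,X)\al$. Since the three claimed conditions $\rho\circ\De(r)=0$, $S^2\mathfrak{a}=0$, $Q_X\De(r)=0$ are manifestly tensorial in $X,\al,\be$, one may restrict in each case to a local basis of sections provided by Proposition \ref{pr1} and carry out the verification using the explicit formula \eqref{produitr} for $T$ together with the identities \eqref{eq17} for the mixed curvatures.

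For (a), Lie-admissibility of $T$ forces $\rho_\mathrm{r}:\Ga(A^*)\too\X(M)$ to be a Lie algebra morphism, which by \eqref{eqcn} is exactly $\rho\circ\De(r)=0$. With this assumed, the vanishing of $R^T$ is checked by expanding $T_\al T_\be\ga-T_\be T_\al\ga-T_{[\al,\be]_T}\ga$ directly via \eqref{produitr}: the double-derivative terms in $\mathrm{r}$ collapse thanks to the flatness of $S$, and the remaining terms reorganize into $\De(r)$ and its first $S$-derivative, which vanishes once $Q_X\De(r)=0$ is imposed.

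For (b), substituting \eqref{produitr} into the first formula of \eqref{eq17} and pairing $R^\na(X,\al)Y-R^\na(Y,\al)X$ with an arbitrary $\be\in\Ga(A^*)$ produces a sum in which the contributions of the symmetric part $\mathfrak{s}$ are symmetric in $(X,Y)$ and therefore cancel, while the terms involving the antisymmetric part assemble into the second $S$-derivation of $\mathfrak{a}$ evaluated on $(X,Y,\al,\be)$. This is precisely $S^2\mathfrak{a}$, so (b) is equivalent to $S^2\mathfrak{a}=0$. An analogous expansion starting from the second formula of \eqref{eq17} shows that $R^\na(\al,X)\be-R^\na(\be,X)\al$ equals $Q_X\De(r)(\al,\be)$ modulo terms killed by $\rho\circ\De(r)=0$, giving the equivalence (c) $\Leftrightarrow$ $Q_X\De(r)=0$.

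The main obstacle will be the bookkeeping needed to separate cleanly the roles of $\mathfrak{s}$ and $\mathfrak{a}$: at every step the symmetric part must be shown to drop out of the antisymmetric curvature combinations so that the genuine obstructions live solely in the antisymmetric part $\mathfrak{a}$. A second subtlety is that $R^T$ contains both first and second $S$-derivatives of $\mathrm{r}$, and these have to be re-expressed as $S$-derivatives of the single tensor $\De(r)$ before $Q_X\De(r)=0$ becomes directly applicable; this reorganization, rather than any isolated curvature identity, is where most of the effort will be concentrated.
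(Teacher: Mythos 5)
Your overall strategy---unpacking the statement into the vanishing of $R^T$ plus the two compatibility identities of \eqref{eq190} and computing each one explicitly from \eqref{produitr} and \eqref{eq17}---is exactly the paper's strategy (its Lemma \ref{lemma}). However, the way you attribute the three tensorial conditions to the three curvature identities is wrong, and two of your claimed intermediate equivalences are false as stated. First, the identity $R^\na(X,\al)Y=R^\na(Y,\al)X$ holds \emph{unconditionally}, for every $\mathrm{r}\in\Ga(A\otimes A)$: after substituting \eqref{eqt} into \eqref{eq17} and antisymmetrizing in $X,Y$, everything cancels using only the flatness of $S$ and the Jacobi identity for $[\;,\;]_S$. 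It therefore cannot be ``equivalent to $S^2\mathfrak{a}=0$'' (if it were, $S^2\mathfrak{a}=0$ would hold for every $\mathrm{r}$). Second, the condition that actually produces $S^2\mathfrak{a}=0$ is the \emph{other} compatibility identity: the paper's computation gives
\begin{equation*}
\prec R^\na(\al,X)\be,Y\succ=-S_XS_Y\mathrm{r}(\al,\be)+S_{S_XY}\mathrm{r}(\al,\be),
\end{equation*}
whose antisymmetrization in $(\al,\be)$ is $-2S^2_{X,Y}\mathfrak{a}(\al,\be)$; it has nothing to do with $\De(r)$ or $Q_X\De(r)$, contrary to your claim for step (c).

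Third, in your step (a) the claim that the second-derivative terms in $R^T$ ``collapse thanks to the flatness of $S$'' is not accurate: the full expansion of $\prec R^T(\al,\be)\ga,X\succ$ leaves, besides $-\rho(\De(r)(\al,\be)).\prec\ga,X\succ-\prec\ga,Q_X\De(r)(\al,\be)\succ$, the residual terms $2S^2_{X,\mathfrak{s}_\#(\ga)}\mathfrak{a}(\al,\be)-2S^2_{X,\mathfrak{a}_\#(\ga)}\mathfrak{a}(\al,\be)$. So $R^T=0$ together with $\rho\circ\De(r)=0$ is \emph{not} by itself equivalent to $Q_X\De(r)=0$; one must first extract $S^2\mathfrak{a}=0$ from the compatibility condition $R^\na(\al,X)\be=R^\na(\be,X)\al$ before the formula for $R^T$ reduces to $Q_X\De(r)$. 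Because the conjunction of your (incorrectly attributed) equivalences happens to coincide with the conjunction in \eqref{exact}, the final statement you would write down is the right one, but the proof as planned does not go through: if you actually performed the computations you describe, you would find that (b) yields $0=0$ rather than $S^2\mathfrak{a}=0$ and that (c) yields $S^2\mathfrak{a}$ rather than $Q_X\De(r)$, so the logical skeleton needs to be reassembled as in the paper's Lemma \ref{lemma} before the argument is correct.
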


Before proving this theorem, let us give some clarifications on \eqref{exact}. First, note that $S\mathfrak{a}$ and $S^2\mathfrak{a}$ are given by
\[ S_X\mathfrak{a}(\al,\be)=\rho(X).\mathfrak{a}(\al,\be)-\mathfrak{a}(S_X^*\al,\be)-\mathfrak{a}(\al,S_X^*\be)\esp S^2_{X,Y}\mathfrak{a}=S_XS_Y\mathfrak{a}-S_{S_XY}\mathfrak{a}. \]
The second point is that the quantity $Q_X\De(r)(\al,\be)$ is tensorial with respect to $\al$ and $\be$, it is not tensorial with respect to $X$. However,  when $\rho\circ\De(r)=0$ then it becomes tensorial with respect to $X$ and hence the last equation in \eqref{exact} is tensorial. The proof of  Theorem \ref{theo1} is a consequence of the following lemma.

\begin{Le}\label{lemma} Let $(A,M,\rho,S)$ be a left symmetric algebroid and $\mathrm{r}=
	\mathfrak{a}+\mathfrak{s}\in\Ga(A\otimes A)$. With the notations above, we have, for any $X,Y\in A$ and for any $\al,\be,\ga\in A^*$
	\[ R^\na(X,\al)Y=R^\na(Y,\al)X,\;\prec R^\na(\al,X)\be-R^\na(\be,X)\al,Y\succ=-2S^2_{X,Y}\mathfrak{a}(\al,\be)\esp \]
	\begin{eqnarray*}
		\prec R^T(\al,\be)\ga,X\succ
		&=&-\rho\left(\De(r)(\al,\be)\right).\prec\ga,X\succ-
		\prec\ga,[X,\De(r)(\al,\be)]_S\succ+\prec\ga,\De(r)(S_X^*\al,\be)\succ
		+\prec\ga,\De(r)(\al,S_X^*\be)\succ\\&&+2S^2_{X,\mathfrak{s}_\#(\ga)}\mathfrak{a}(\al,\be)
		-2S^2_{X,\mathfrak{a}_\#(\ga)}\mathfrak{a}(\al,\be).
	\end{eqnarray*}
\end{Le}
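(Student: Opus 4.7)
The strategy is a direct computation of each of the three curvature expressions starting from formulas \eqref{eq17} and \eqref{produitr}, and then collecting terms.

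First I would derive a workable formula for $T_\al^*X$ by dualising \eqref{produitr}: for $\al,\be\in\Ga(A^*)$ and $X\in\Ga(A)$,
\begin{equation*}
\prec\be,T_\al^*X\succ \;=\; \rho(\mathrm{r}_\#(\al)).\prec\be,X\succ \;-\; S_X\mathrm{r}(\al,\be) \;-\; \prec S_{\mathrm{r}_\#(\al)}^*\be,X\succ .
\end{equation*}
Plugging this and \eqref{produitr} into the first formula of \eqref{eq17}, I would compute $R^\na(X,\al)Y-R^\na(Y,\al)X$. All terms involving $\mathfrak{s}$ and $\mathfrak{a}$ without the Jacobiator drop out because $(A,M,\rho,S)$ is left symmetric (so $R^S=0$) and torsionless ($[X,Y]_S=S_XY-S_YX$); this yields the first identity.

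For the second identity, I pair $R^\na(\al,X)\be-R^\na(\be,X)\al$ against $Y\in\Ga(A)$ using the second formula of \eqref{eq17} and the same expansion of $T$. Every term coming from the symmetric part $\mathfrak{s}$ of $\mathrm{r}$ is symmetric in $\al,\be$, hence cancels; the surviving terms depend only on $\mathfrak{a}$ and, after using $R^S=0$ once more and the defining identity $S_XS_Y\mathfrak{a}-S_{S_XY}\mathfrak{a}=S^2_{X,Y}\mathfrak{a}$, collect into exactly $-2S^2_{X,Y}\mathfrak{a}(\al,\be)$, the factor $2$ arising from skew-symmetrisation of $\al,\be$.

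For the third identity I expand $R^T(\al,\be)\ga=T_\al T_\be\ga-T_\be T_\al\ga-T_{[\al,\be]_T}\ga$ and pair with $X$. Applying \eqref{produitr} twice produces terms of three types: (i) a term $-\rho(\mathrm{r}_\#([\al,\be]_T)).\prec\ga,X\succ + \rho([\mathrm{r}_\#(\al),\mathrm{r}_\#(\be)]_S).\prec\ga,X\succ$ that combines into $-\rho(\De(r)(\al,\be)).\prec\ga,X\succ$; (ii) an analogous collection of the ``tensorial'' coboundary terms that assemble into $-\prec\ga,[X,\De(r)(\al,\be)]_S\succ+\prec\ga,\De(r)(S_X^*\al,\be)\succ+\prec\ga,\De(r)(\al,S_X^*\be)\succ$; (iii) the leftover purely $\mathrm{r}$-quadratic pieces, which split under $\mathrm{r}=\mathfrak{s}+\mathfrak{a}$. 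Here the $\mathfrak{s}\mathfrak{s}$ and $\mathfrak{s}\mathfrak{a}$ mixed contributions cancel by the symmetry argument already used in step two, and what remains is precisely $2S^2_{X,\mathfrak{s}_\#(\ga)}\mathfrak{a}(\al,\be)-2S^2_{X,\mathfrak{a}_\#(\ga)}\mathfrak{a}(\al,\be)$, where the two terms track the respective contributions of $\mathfrak{s}$ and $\mathfrak{a}$ in the ``outer'' slot when one inner slot is supplied by $\mathfrak{a}$.

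The main obstacle is step three: the computation produces a long list of terms and the delicate point is grouping them so that (a) the non-tensorial parts (in $X$) collapse exactly to $\rho(\De(r))$ acting on a scalar function, and (b) the residual pieces recombine into $S^2\mathfrak{a}$ with the correct signs. I would organise the bookkeeping by separating at each step the part of $T$ coming from $S^*_{\mathrm{r}_\#(\cdot)}$ from the ``Leibniz'' part $S_X\mathrm{r}(\al,\be)$, since only the latter ever produces non-tensorial $X$-derivatives, and by systematically writing $\mathrm{r}=\mathfrak{s}+\mathfrak{a}$ at the very end to identify which terms vanish by symmetry and which collect into the $S^2\mathfrak{a}$ contribution.
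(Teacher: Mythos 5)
Your plan is correct and follows the paper's own proof essentially step for step: the paper likewise begins by establishing $T_\al^*X=\mathrm{r}_\#(S_X^*\al)+[\mathrm{r}_\#(\al),X]_S$ (the dual form of your formula for $\prec\be,T_\al^*X\succ$), substitutes into \eqref{eq17}, kills $R^\na(X,\al)Y-R^\na(Y,\al)X$ using $R^S=0$ and the Jacobi identity of $[\;,\;]_S$, and obtains $\prec R^\na(\al,X)\be,Y\succ=-S^2_{X,Y}\mathrm{r}(\al,\be)$ so that antisymmetrising in $\al,\be$ leaves $-2S^2_{X,Y}\mathfrak{a}(\al,\be)$. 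For $R^T$ the paper groups terms exactly as you propose: the non-tensorial and coboundary pieces assemble into the $\De(\mathrm{r})$-terms, and the residue is a one-form $Q$ with $\prec Q,Y\succ=2S^2_{X,Y}\mathfrak{a}(\al,\be)$, evaluated at $Y=\mathfrak{s}_\#(\ga)$ and $Y=\mathfrak{a}_\#(\ga)$ with opposite signs coming from transposing $\mathrm{r}_\#$.
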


\begin{proof} Note first  that a direct computation using \eqref{produitr} gives, for any $X\in\Ga(A),\al\in\Ga(A^*)$,
	\begin{equation}\label{eqt}
	T_\al^*X=\mathrm{r}_\#(S_X^*\al)+[{\mathrm{r}_\#(\al)},X]_S.
	\end{equation}
	On the other hand, recall from \eqref{eq17} that $R^\na(X,\al)=[S_X,T_\al^*]
	+S_{T^*_\al X}-T_{S^*_X\al}^*$. So by using \eqref{eqt} we get
	\begin{eqnarray*}
		R^\na(X,\al)Y&=&S_X\mathrm{r}_\#(S_Y^*\al)+
		S_X[{\mathrm{r}_\#(\al)},Y]_S-
		\mathrm{r}_\#(S_{S_XY}^*\al)-[{\mathrm{r}_\#(\al)},S_XY]_S\\
		&&+S_{\mathrm{r}_\#(S_X^*\al)}Y+
		S_{[{\mathrm{r}_\#(\al)},X]_S}Y-
		 \mathrm{r}_\#(S_Y^*S^*_X\al)- [{\mathrm{r}_\#(S^*_X\al)},Y]_S\\
		&=&[X,\mathrm{r}_\#(S_Y^*\al)]_S+S_{\mathrm{r}_\#(S_Y^*\al)}X+
		S_X[{\mathrm{r}_\#(\al)},Y]_S-
		\mathrm{r}_\#(S_{S_XY}^*\al)-[{\mathrm{r}_\#(\al)},S_XY]_S\\
		&&+S_{\mathrm{r}_\#(S_X^*\al)}Y+
		S_{[{\mathrm{r}_\#(\al)},X]_S}Y-
		 \mathrm{r}_\#(S_Y^*S^*_X\al)- [{\mathrm{r}_\#(S^*_X\al)},Y]_S\\
		&=&[X,\mathrm{r}_\#(S_Y^*\al)]_S+
		[Y,\mathrm{r}_\#(S_X^*\al)]_S+
		S_{\mathrm{r}_\#(S_X^*\al)}Y+
		S_{\mathrm{r}_\#(S_Y^*\al)}X+
		[X,[{\mathrm{r}_\#(\al)},Y]_S]_S\\&&-
		\mathrm{r}_\#(S_{S_XY}^*\al+S_Y^*S^*_X\al)-
		[{\mathrm{r}_\#(\al)},S_XY]_S
		+
		S_{[{\mathrm{r}_\#(\al)},X]_S}Y+
		S_{[{\mathrm{r}_\#(\al)},Y]_S}X.
	\end{eqnarray*}
	So
	\begin{eqnarray*} 
		R^\na(X,\al)Y-R^\na(Y,\al)X&=&\mathrm{r}_\#((R^S(Y,X))^*\al)+[\mathrm{r}_\#(\al),[Y,X]_S]_S+[X,[\mathrm{r}_\#(\al),Y]_S]_S+[Y,[X,\mathrm{r}_\#(\al)]_S]_S=0. 
	\end{eqnarray*}This shows the first relation.

	We have from \eqref{eq17} that
	$ R^\na(\al,X)=[{T}_\al,{S}_X^*]
	+{T}_{S^*_X \al}-{S}_{{T}^*_\al X}^*.$ By using \eqref{produitr} and \eqref{eqt}, we get
	\begin{eqnarray*}
		\prec R^\na(\al,X)\be,Y\succ&=&\prec T_\al{S}_X^*\be,Y\succ -\prec {S}_X^*{T}_\al\be,Y\succ+\prec{T}_{S^*_X \al}\be,Y\succ-\prec{S}_{{T}^*_\al X}^*\be,Y\succ\\
		&=&S_Y\mathrm{r}(\al,{S}_X^*\be)+\prec S_{\mathrm{r}_\#(\al)}^*{S}_X^*\be,Y\succ
		-\rho(X).\prec {T}_\al\be,Y\succ+\prec {T}_\al\be,{S}_XY\succ\\
		&&+S_Y\mathrm{r}(S^*_X\al,\be)+\prec S_{\mathrm{r}_\#(S^*_X\al)}^*\be,Y\succ
		-\prec{S}_{\mathrm{r}_\#(S_X^*\al)}^*\be,Y\succ-\prec{S}_{[\mathrm{r}_\#(\al),X]_S}^*\be,Y\succ\\
		&=&S_Y\mathrm{r}(\al,{S}_X^*\be)+S_Y\mathrm{r}(S^*_X\al,\be)+\prec S_{\mathrm{r}_\#(\al)}^*{S}_X^*\be,Y\succ
		-\rho(X).S_Y\mathrm{r}(\al,\be)-\rho(X).\prec S_{\mathrm{r}_\#(\al)}^*\be,Y\succ\\
		&&+S_{S_XY}\mathrm{r}(\al,\be)+\prec S_{\mathrm{r}_\#(\al)}^*\be,S_XY\succ
		-\prec{S}_{[\mathrm{r}_\#(\al),X]_S}^*\be,Y\succ\\
		&=&-\rho(X).S_Y\mathrm{r}(\al,\be)+S_Y\mathrm{r}(\al,{S}_X^*\be)+S_Y\mathrm{r}
		(S^*_X\al,\be)
		+S_{S_XY}\mathrm{r}(\al,\be)\\
		&=&-S_XS_Y\mathrm{r}(\al,\be)+S_{S_XY}\mathrm{r}(\al,\be),
	\end{eqnarray*}and the second relation follows from $\mathrm{r}=\mathfrak{s}+\mathfrak{a}$.

Let us  compute the curvature of $T$. Remark first that from \eqref{produitr} we can derive easily that
\begin{equation}\label{eqbracketr} \prec [\al,\be]_T,X\succ:=\prec T_\al\be-T_\be\al,X\succ=
\prec S_{\mathrm{r}_\#(\al)}^*\be-S_{\mathrm{r}_\#(\be)}^*\al,X\succ+2S_X\mathfrak{a}(\al,\be). \end{equation}
By using \eqref{eqt} once more, we get
\begin{eqnarray*}
\prec T_{\al}T_{\be}\ga,X\succ&=&\rho_\mathrm{r}(\al).\prec T_{\be}\ga,X\succ-\prec T_{\be}\ga,T_{\al}^*X\succ\\
&=&\rho_\mathrm{r}(\al)\circ\rho_\mathrm{r}(\be).\prec \ga,X\succ-\rho_\mathrm{r}(\al).\prec \ga,T_{\be}^*X\succ
-\rho_\mathrm{r}(\be).\prec \ga,T_{\al}^*X\succ+\prec \ga,T_{\be}^*T_{\al}^*X\succ\\
&=&\rho_\mathrm{r}(\al)\circ\rho_\mathrm{r}(\be).\prec \ga,X\succ-\rho_\mathrm{r}(\al).\prec \ga,T_{\be}^*X\succ
-\rho_\mathrm{r}(\be).\prec \ga,T_{\al}^*X\succ\\
&&+\prec \ga,\mathrm{r}_\#(S_{\mathrm{r}_\#(S_X^*\al)}^*\be)\succ
+\prec \ga,[{\mathrm{r}_\#(\be)},\mathrm{r}_\#(S_X^*\al)]\succ
+\prec \ga,\mathrm{r}_\#(S_{[{\mathrm{r}_\#(\al)},X]_S}^*\be)\succ
+\prec \ga,[{\mathrm{r}_\#(\be)},[{\mathrm{r}_\#(\al)},X]_S]_S\succ,\\
&=&\rho_\mathrm{r}(\al)\circ\rho_\mathrm{r}(\be).\prec \ga,X\succ-\rho_\mathrm{r}(\al).\prec \ga,T_{\be}^*X\succ
-\rho_\mathrm{r}(\be).\prec \ga,T_{\al}^*X\succ
+\prec \ga,\mathrm{r}_\#(S_{\mathrm{r}_\#(S_X^*\al)}^*\be)\succ\\&&
+\prec \ga,\De(\mathrm{r})(S_X^*\al,\be)\succ+\prec \ga,\mathrm{r}_\#([\be,S_X^*\al]_T)\succ
+\prec \ga,\mathrm{r}_\#(S_{[{\mathrm{r}_\#(\al)},X]_S}^*\be)\succ
+\prec \ga,[{\mathrm{r}_\#(\be)},[{\mathrm{r}_\#(\al)},X]_S]_S\succ,\\
	\prec T_{[\al,\be]_T}\ga,X\succ&=&\rho_\mathrm{r}([\al,\be]_T).\prec\ga,X\succ-\prec\ga,T_{[\al,\be]_T}^*X\succ\\
	&=&\rho_\mathrm{r}([\al,\be]_T).\prec\ga,X\succ-\prec\ga,\mathrm{r}_\#(S_X^*[\al,\be]_T)\succ-
	\prec\ga,[{\mathrm{r}_\#([\al,\be]_T)},X]_S\succ.
	\end{eqnarray*}
By using the Jacobi identity for $X,{\mathrm{r}_\#(\al)},{\mathrm{r}_\#(\be)}$, we get
\begin{eqnarray*}
	\prec R^T(\al,\be)\ga,X\succ&=&-\rho\left(\De(r)(\al,\be)\right).\prec\ga,X\succ-
	\prec\ga,[X,\De(r)(\al,\be)]_S\succ+\prec\ga,\De(r)(S_X^*\al,\be)\succ
	+\prec\ga,\De(r)(\al,S_X^*\be)\succ\\&&+\prec Q,\mathfrak{s}(\ga)\succ-\prec Q,\mathfrak{a}(\ga)\succ, 
\end{eqnarray*}where
\[ Q=S_{\mathrm{r}_\#(S_X^*\al)}^*\be-S_{\mathrm{r}_\#(S_X^*\be)}^*\al+[\be,S_X^*\al]_T-[\al,S_X^*\be]_T+S_{[{\mathrm{r}_\#(\al)},X]_S}^*\be-S_{[{\mathrm{r}_\#(\be)},X]_S}^*\al+S_X^*[\al,\be]_T. \]
Now, by using \eqref{eqbracketr} and the fact that the curvature of $S$ vanishes, we get
\begin{eqnarray*}
\prec Q,Y\succ&=&\prec S_{\mathrm{r}_\#(\be)}^*S_X^*\al,Y\succ-\prec S_{\mathrm{r}_\#(\al)}^*S_X^*\be,Y\succ+2S_Y\mathfrak{a}(\be,S_X^*\al)-2S_Y\mathfrak{a}(\al,S_X^*\be)+\prec S_{[{\mathrm{r}_\#(\al)},X]_S}^*\be,Y\succ\\&&-\prec S_{[{\mathrm{r}_\#(\be)},X]_S}^*\al,Y\succ
+\rho(X).\prec[\al,\be]_T,Y\succ-\prec[\al,\be]_T,S_XY\succ\\
&=&-2S_Y\mathfrak{a}(S_X^*\al,\be)-2S_Y\mathfrak{a}(\al,S_X^*\be)+\prec S_X^*S_{\mathrm{r}_\#(\be)}^*\al,Y\succ-\prec S_X^*S_{\mathrm{r}_\#(\al)}^*\be,Y\succ+\rho(X).\prec S_{\mathrm{r}_\#(\al)}^*\be,Y\succ\\&&-
\rho(X).\prec S_{\mathrm{r}_\#(\be)}^*\al,Y\succ+2\rho(X).S_Y\mathfrak{a}(\al,\be)-
\prec S_{\mathrm{r}_\#(\al)}^*\be,S_XY\succ+\prec S_{\mathrm{r}_\#(\be)}^*\al,S_XY\succ-2S_{S_XY}\mathfrak{a}(\al,\be)\\
&=&2S^2_{X,Y}\mathfrak{a}(\al,\be).
\end{eqnarray*}

So we get the  lemma.\end{proof}

\begin{co} \label{co1}Let $(A,M,\rho,S)$ be a left symmetric algebroid such that $\rho$ is into and $\mathrm{r}=
	\mathfrak{a}+\mathfrak{s}\in\Ga(A\otimes A)$. Then $(A^*,M,\rho_{\mathrm{r}},T)$ is a left symmetric algebroid with $(A,M,\rho,S)$ and $(A^*,M,\rho_\mathrm{r},T)$ being compatible if and only if
	$$\De(r)=0\esp S^2\mathfrak{a}=0.$$
	
\end{co}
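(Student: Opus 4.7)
The plan is to derive the corollary directly from Theorem \ref{theo1}, exploiting the injectivity of $\rho$ to collapse two of the three conditions in \eqref{exact} into a single tensorial condition.

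First I would invoke Theorem \ref{theo1}, which gives the equivalence between the claimed structural property of $(A^*,M,\rho_\mathrm{r},T)$ and the three conditions
\[ \rho\circ\De(r)=0,\quad S^2\mathfrak{a}=0,\quad Q_X\De(r)(\al,\be)=0. \]
For the forward implication, assume $(A^*,M,\rho_\mathrm{r},T)$ is a left symmetric algebroid compatible with $(A,M,\rho,S)$. Theorem \ref{theo1} yields $\rho\circ\De(r)=0$ and $S^2\mathfrak{a}=0$. Since $\rho$ is into, the vanishing of $\rho\circ\De(r)$ forces $\De(r)=0$ (recall $\De(r)\in\Ga(A\otimes A\otimes A)$ and $\rho$ is fiberwise injective, so the pointwise values of $\De(r)$ vanish).

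For the reverse implication, suppose $\De(r)=0$ and $S^2\mathfrak{a}=0$. Plainly $\rho\circ\De(r)=0$. The third condition in \eqref{exact} reads
\[ Q_X\De(r)(\al,\be)=[X,\De(r)(\al,\be)]_S-\De(r)(S_X^*\al,\be)-\De(r)(\al,S_X^*\be), \]
which is manifestly zero once $\De(r)\equiv 0$. Thus all three conditions of Theorem \ref{theo1} are satisfied, and the conclusion follows.

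There is no real obstacle here; the only point that requires a moment's thought is that $\rho$ being injective on fibers is sufficient to pass from $\rho\circ\De(r)=0$ to $\De(r)=0$ at the level of sections (which is immediate since the equation is tensorial and can be checked pointwise). The corollary is therefore a clean specialization of Theorem \ref{theo1} under the hypothesis that $\rho$ is into.
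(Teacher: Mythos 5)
Your proof is correct and is exactly the intended derivation: the paper states the corollary as an immediate consequence of Theorem \ref{theo1}, relying on the fact that fiberwise injectivity of $\rho$ upgrades $\rho\circ\De(r)=0$ to $\De(r)=0$, after which the condition $Q_X\De(r)=0$ is vacuous. Nothing is missing.
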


Let $(A,M,\rho,S)$ be a left symmetric algebroid  and $\mathrm{r}=
\mathfrak{a}+\mathfrak{s}\in\Ga(A\otimes A)$ satisfying \eqref{exact}. Then $(A^*,M,\rho_{\mathrm{r}},T)$ is a left symmetric algebroid compatible with $(A,M,\rho,S)$. According to Theorem \ref{theoextendible}, $(\Phi(A),M,\rho\oplus\rho_{\mathrm{r}})$ carries a para-K\"ahler Lie algebroid structure which will be  called exact. This induces on $M$ a symmetric bivector and a Poisson tensor which, by virtue of \eqref{poisson1}, are given by
\begin{equation}\label{relation} h(\al,\be)=
2\mathfrak{s}(\rho^*(\al),\rho^*(\be))\esp \pi(\al,\be)=
2\mathfrak{a}(\rho^*(\al),\rho^*(\be)). \end{equation}

\begin{exem} Let $(A,M,\rho,S)$ be a left symmetric algebroid  and $\mathrm{r}=
	\mathfrak{a}+\mathfrak{s}\in\Ga(A\otimes A)$ which is $S$-parallel, i.e., $S\mathrm{r}=0$. Then $S\mathfrak{a}=0$ and it is easy to check that $\De(\mathrm{r})=0$. Thus $\mathrm{r}$ satisfies \eqref{exact}.
	
\end{exem}

\section{Para-K\"ahler Lie algebroids associated to quasi $\S$-matrices}\label{section5}

In this section, we study a class of exact para-K\"ahler Lie algebroids associated to a kind of solutions of \eqref{exact} we will call quasi $S$-matrices using the same terminology used in the context of para-K\"ahler Lie algebras in \cite{bouben}.

\begin{Def}\label{def}
 A quasi $\S$-matrix of a left symmetric algebroid $(A,M,\rho,S)$ is a $\mathrm{r}=\mathfrak{a}+\mathfrak{s}\in\Ga(A\otimes A)$ such that, for any $\al,\be\in\Ga(A^*)$ and $X\in\Ga(A)$,
\[ \rho\circ\De(r)=0,\; S\mathfrak{a}=0\esp
Q_X\De(r)(\al,\be):=[X,\De(r)(\al,\be)]-\De(r)(S_X^*\al,\be)-\De(r)(\al,S_X^*\be)=0. \]\end{Def}
In what follows, we  focus our attention on the para-K\"ahler Lie algebroid structure on $\Phi(A)$ associated to a quasi $\S$-matrix. We show that the Lie algebroid structure can be described in a precise and simple way. Indeed, let $\mathrm{r}$  be a quasi $S$-matrix.
Then, according to Theorem  \ref{theo1},
the right-anchored product $T$ on $A^*$ given by \eqref{produitr} is left symmetric and
$(\Phi(A),[\;,\;]^r,\rho+\rho_{\mathrm{r}},\prs_0,K_0)$ is a para-K\"ahler Lie algebroid, where
\[ [X+\al,Y+\be]^r=[X,Y]_S+S_{X}^*\be+T_{\al}^*Y-
S_{Y}^*\al-T_{\be}^*X+[\al,\be]_T. \]  We have shown in Example \ref{exemple1} that $\Phi(A)$ carries a left symmetric product $\na^0$ and its associated Lie bracket $[\;,\;]^\triangleright$ induces on $\Phi(A)$ a para-K\"ahler Lie algebroid structure.
We  define  a new bracket on $\Phi(A)$ by putting
\begin{equation}\label{eqnb} [X+\al,Y+\be]^{\triangleright,r}= [X+\al,Y+\be]^\triangleright+\De(\mathrm{r})(\al,\be).\end{equation}

\begin{pr}\label{pralg} $(\Phi(A),[\;,\;]^{\triangleright,r},\rho+0)$ is a Lie algebroid and
	the linear map $\xi:(\Phi(A),[\;,\;]^{\triangleright,r},\rho+0)\too (\Phi(A),[\;,\;]^r,\rho+\rho_{\mathrm{r}})$, $X+\al\mapsto X-\mathrm{r}_\#(\al)+\al$ is an isomorphism of Lie algebroids.
\end{pr}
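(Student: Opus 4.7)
The plan is to verify that $\xi$ intertwines both the anchors and the brackets; once this is established, the Lie algebroid structure on the source follows formally, because $\xi$ is patently a vector bundle isomorphism (with inverse $Y+\al\mapsto (Y+\mathrm{r}_\#(\al))+\al$) and the target is a Lie algebroid by Theorem \ref{theoextendible}. The skew-symmetry of $[\;,\;]^{\triangleright,r}$ is clear from the skew-symmetry of $\De(\mathrm{r})$ in $(\al,\be)$ and of $[\;,\;]^\triangleright$, and the anchored property of $[\;,\;]^{\triangleright,r}$ with respect to $\rho+0$ is inherited from the anchored property of $[\;,\;]^\triangleright$ (Example \ref{exemple1}) together with the tensorial character of $\De(\mathrm{r})\in\Ga(A^*\otimes A^*\otimes A)$. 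The Jacobi identity need not then be checked by hand: it transports from $[\;,\;]^r$ through $\xi$.

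Anchor intertwining is immediate: for $X+\al\in\Ga(\Phi(A))$, $(\rho+\rho_\mathrm{r})(\xi(X+\al))=\rho(X)-\rho(\mathrm{r}_\#(\al))+\rho_\mathrm{r}(\al)=\rho(X)=(\rho+0)(X+\al)$, since $\rho_\mathrm{r}=\rho\circ\mathrm{r}_\#$.

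For the bracket, I would split both $\xi([X+\al,Y+\be]^{\triangleright,r})$ and $[\xi(X+\al),\xi(Y+\be)]^r$ into their $\Ga(A)$ and $\Ga(A^*)$ components and compare. On the one hand, using the definition \eqref{eqnb} and the formula $[X+\al,Y+\be]^\triangleright=[X,Y]_S+S_X^*\be-S_Y^*\al$ from Example \ref{exemple1}, one computes that $\xi$ applied to the left hand side has $A^*$-part $S_X^*\be-S_Y^*\al$ and $A$-part $[X,Y]_S+\De(\mathrm{r})(\al,\be)-\mathrm{r}_\#(S_X^*\be-S_Y^*\al)$. On the other hand, writing $X'=X-\mathrm{r}_\#(\al)$, $Y'=Y-\mathrm{r}_\#(\be)$ and expanding the right hand side by means of the defining formula of $[\;,\;]^r$ together with identity \eqref{eqt} (which expresses $T_\al^*Z=\mathrm{r}_\#(S_Z^*\al)+[\mathrm{r}_\#(\al),Z]_S$), one collects, after routine cancellations of the commutators $[\mathrm{r}_\#(\al),\mathrm{r}_\#(\be)]_S$, the $A^*$-part $S_X^*\be-S_Y^*\al-S_{\mathrm{r}_\#(\al)}^*\be+S_{\mathrm{r}_\#(\be)}^*\al+[\al,\be]_T$ and the $A$-part $[X,Y]_S+\mathrm{r}_\#(S_Y^*\al)-\mathrm{r}_\#(S_X^*\be)+\mathrm{r}_\#(S_{\mathrm{r}_\#(\al)}^*\be)-\mathrm{r}_\#(S_{\mathrm{r}_\#(\be)}^*\al)-[\mathrm{r}_\#(\al),\mathrm{r}_\#(\be)]_S$. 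The quasi $\S$-matrix condition $S\mathfrak{a}=0$ now enters decisively: by \eqref{eqbracketr} it is equivalent to the identity $[\al,\be]_T=S_{\mathrm{r}_\#(\al)}^*\be-S_{\mathrm{r}_\#(\be)}^*\al$, which makes the $A^*$-part collapse to $S_X^*\be-S_Y^*\al$. Applying $\mathrm{r}_\#$ to the same identity and inserting it into the definition $\De(\mathrm{r})(\al,\be)=\mathrm{r}_\#([\al,\be]_T)-[\mathrm{r}_\#(\al),\mathrm{r}_\#(\be)]_S$ rewrites the $A$-part as $[X,Y]_S+\De(\mathrm{r})(\al,\be)+\mathrm{r}_\#(S_Y^*\al)-\mathrm{r}_\#(S_X^*\be)$, matching the left hand side exactly.

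The main obstacle is the bookkeeping in this bracket comparison, where six or seven terms involving brackets of the form $[\mathrm{r}_\#(\cdot),\cdot]_S$ must be made to cancel simultaneously; there is no genuine conceptual difficulty beyond the systematic use of \eqref{eqt}, \eqref{eqbracketr}, the Jacobi identity for $[\;,\;]_S$, and the hypothesis $S\mathfrak{a}=0$ from the quasi $\S$-matrix definition. Once the bracket identity holds, $\xi$ is a bundle isomorphism of anchored vector bundles intertwining the two brackets, so the source inherits both the Jacobi identity and hence a Lie algebroid structure, and $\xi$ is by construction the desired Lie algebroid isomorphism.
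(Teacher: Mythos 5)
Your proposal is correct and follows essentially the same route as the paper: both establish that $\xi$ intertwines the anchors and the brackets via identity \eqref{eqt} and the relation $[\al,\be]_T=S_{\mathrm{r}_\#(\al)}^*\be-S_{\mathrm{r}_\#(\be)}^*\al$ coming from $S\mathfrak{a}=0$ and \eqref{eqbracketr}, and then transport the Lie algebroid structure back through the bundle isomorphism $\xi$. The only cosmetic difference is that you carry out one combined computation for general sections $X+\al$, $Y+\be$, whereas the paper checks the three bilinear cases $(X,Y)$, $(X,\al)$, $(\al,\be)$ separately.
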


\begin{proof}  Clearly $\xi$ is bijective. Let us show that $\xi$ preserves the Lie brackets. 
	It is clear that, for any $X,Y\in \Ga(A)$, $\xi\left([X,Y]^{\triangleright,r} \right)=[\xi(X),\xi(Y)]^r$. Now, for any $X\in \Ga(A)$, $\al\in \Ga(A^*)$,
	\begin{eqnarray*}
		\xi\left([X,\al]^{\triangleright,r} \right)&=&\xi(S_X^*\al)\\
		&=&-\mathrm{r}_\#(S_X^*\al)+S_X^*\al\\
		&\stackrel{\eqref{eqt}}=&-T_\al^*X-[X,\mathrm{r}_\#(\al)]_S+S_X^*\al\\
		&=&[X,-\mathrm{r}_\#(\al)+\al]^r\\
		&=&[\xi(X),\xi(\al)]^r.
	\end{eqnarray*}On the other hand, for any $\al,\be\in \Ga(A^*)$,
	\begin{eqnarray*}
		\xi\left([\al,\be]^{\triangleright,r} \right)&=&\xi(\De(\mathrm{r})(\al,\be))\\
		&=&\De(\mathrm{r})(\al,\be),\\
		\;[\xi(\al),\xi(\be)]^r&=&[-\mathrm{r}_\#(\al)+\al,-\mathrm{r}_\#(\be)+\be]^r\\
		&=&[\mathrm{r}_\#(\al),\mathrm{r}_\#(\be)]_S+[\al,\be]_T-S_{\mathrm{r}_\#(\al)}^*\be+
		S_{\mathrm{r}_\#(\be)}^*\al-T_{\al}^*\mathrm{r}_\#(\be)+
		T_{\be}^*\mathrm{r}_\#(\al)\\
		&\stackrel{\eqref{eqt}}=&[\mathrm{r}_\#(\al),\mathrm{r}_\#(\be)]_S-
		\mathrm{r}_\#(S_{\mathrm{r}_\#(\be)}^*\al)+
		\mathrm{r}_\#(S_{\mathrm{r}_\#(\al)}^*\be)+[\mathrm{r}_\#(\be),\mathrm{r}_\#(\al)]_S-[\mathrm{r}_\#(\al),\mathrm{r}_\#(\be)]_S\\
		&=&\mathrm{r}_\#([\al,\be]_T)-[\mathrm{r}_\#(\al),\mathrm{r}_\#(\be)]_S\\
		&=&\De(\mathrm{r})(\al,\be).\qedhere
	\end{eqnarray*}
\end{proof}

We can now transport the para-K\"ahler structure associated to $\mathrm{r}$ from $(\Phi(A),[\;,\;]^r,\prs_0,K_0)$ to $\Phi(A)$ via $\xi$ and we get the following proposition.
\begin{pr}\label{prm}  Let $(A,M,\rho,S)$ be a left symmetric algebroid and $\mathrm{r}=\mathfrak{a}+\mathfrak{s}\in \Ga(A\otimes A)$ a quasi $\S$-matrix. Then $(\Phi(A),[\;,\;]^{\triangleright,r},\rho+0,\prs_r,K_r)$ is a para-K\"ahler Lie algebroid, where
	\[ \langle X+\al,Y+\be\rangle_r=\prec\al,Y\succ+\prec\be,X\succ
	-2\mathfrak{s}(\al,\be)\esp K_r(X+\al)=X-\al-2\mathrm{r}_\#(\al). \]
	
\end{pr}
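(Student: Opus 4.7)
My plan is to transport the para-K\"ahler structure $(\prs_0,K_0)$ across the Lie algebroid isomorphism $\xi$ constructed in Proposition \ref{pralg}. Because $\mathrm{r}$ is a quasi $\S$-matrix, Definition \ref{def} gives in particular $S\mathfrak{a}=0$, which is stronger than the condition $S^2\mathfrak{a}=0$ appearing in Theorem \ref{theo1}; hence $(A^*,M,\rho_{\mathrm{r}},T)$ is a left symmetric algebroid compatible with $(A,M,\rho,S)$, and by Theorem \ref{theoextendible} the datum $(\Phi(A),[\;,\;]^r,\rho\oplus\rho_{\mathrm{r}},\prs_0,K_0)$ is a genuine para-K\"ahler Lie algebroid.

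Proposition \ref{pralg} already provides a Lie algebroid isomorphism $\xi:(\Phi(A),[\;,\;]^{\triangleright,r},\rho+0)\too(\Phi(A),[\;,\;]^r,\rho\oplus\rho_{\mathrm{r}})$. I would therefore define $\prs_r:=\xi^*\prs_0$ and $K_r:=\xi^{-1}\circ K_0\circ\xi$. Since $\xi$ is a Lie algebroid isomorphism, the Koszul formula is $\xi$-equivariant, so the Levi-Civita connection of $\prs_r$ is the $\xi$-conjugate of the Levi-Civita connection of $\prs_0$. Consequently the defining properties of a para-K\"ahler structure, namely $K_r^2=\mathrm{Id}$, the $\prs_r$-skew-symmetry of $K_r$, and the parallelism $\na^r K_r=0$, transfer automatically from their analogues for $K_0$.

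All that remains is the routine verification that these transported tensors coincide with the explicit formulas in the statement. Using $\xi(X+\al)=(X-\mathrm{r}_\#(\al))+\al$ and the formula for $\prs_0$, one gets $\langle \xi(X+\al),\xi(Y+\be)\rangle_0 = \prec\al,Y\succ+\prec\be,X\succ-\mathrm{r}(\al,\be)-\mathrm{r}(\be,\al)$; the $\mathfrak{a}$-contributions cancel by skew-symmetry, leaving $-2\mathfrak{s}(\al,\be)$, in agreement with the claimed $\prs_r$. Analogously, from $\xi^{-1}(Y+\be)=(Y+\mathrm{r}_\#(\be))+\be$ and $K_0(u+\mu)=u-\mu$, one computes $K_r(X+\al)=\xi^{-1}(X-\mathrm{r}_\#(\al)-\al)=(X-\mathrm{r}_\#(\al))-\mathrm{r}_\#(\al)-\al=X-\al-2\mathrm{r}_\#(\al)$, as stated.

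I expect no genuine difficulty here: the hard work (extending $S$ and $T$ into a Lie-admissible anchored product and identifying the bracket via $\De(\mathrm{r})$) was already carried out in Theorem \ref{theo1} and Proposition \ref{pralg}. The only point to watch is the sign bookkeeping when applying $\xi^{-1}$ to $-\mathrm{r}_\#(\al)$, where a double minus produces the factor of $2$ in front of $\mathrm{r}_\#(\al)$ in $K_r$.
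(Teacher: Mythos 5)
Your proposal is correct and follows exactly the paper's route: the text preceding Proposition \ref{prm} states that the structure is obtained by transporting $(\prs_0,K_0)$ from $(\Phi(A),[\;,\;]^r,\rho\oplus\rho_{\mathrm{r}})$ to $(\Phi(A),[\;,\;]^{\triangleright,r},\rho+0)$ via the isomorphism $\xi$ of Proposition \ref{pralg}, and your explicit computations of $\xi^*\prs_0$ and $\xi^{-1}\circ K_0\circ\xi$ (including the cancellation of the $\mathfrak{a}$-terms and the factor $2$ in front of $\mathrm{r}_\#(\al)$) are exactly right.
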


\section{Symmetric quasi $\S$-matrices on  affine manifolds and generalized pseudo-Hessian structures}\label{section6}

In this section, we study symmetric quasi $\S$-matrices on the left symmetric algebroid $(TM,M,\mathrm{Id}_{TM},\na)$ associated to an affine manifold $(M,\na)$. This leads naturally to a new structure we call generalized pseudo-Hessian structure. There are many similarities between Poisson manifolds as a generalization of symplectic manifolds and generalized pseudo-Hessian manifolds as a generalization of pseudo-Hessian manifolds and we show some of these similarities.

\paragraph{Symmetric quasi $\S$-matrices on  affine manifolds}
Let $(M,\na)$ be an affine manifold, i.e., a manifold endowed with a torsionless flat connection. Then  $(TM,M,Id_{TM},\na)$ is a left symmetric algebroid and according to Definition \ref{def}, a symmetric quasi $\S$-matrix on $(TM,M,Id_{TM},\na)$ is    a symmetric bivector field $h$ on $M$ such that $\De(h)=0$. Let's study this equation more carefully.
We denote by $\D$ the right-anchored product on $(T^*M,M,h_{\#})$ associated to $h$. According to \eqref{produitr}, we have for any $\al,\be\in\Om^1(M)$ and $X\in\mathrm{X}(M)$,
\begin{equation}\label{connectionh} \prec\D_\al\be,X\succ=\na_X h(\al,\be)+\prec\na^*_{h_\#(\al)}\be,X\succ\esp [\al,\be]_\D=
\na^*_{h_\#(\al)}\be-\na^*_{h_\#(\be)}\al. \end{equation}
\begin{pr}
We have, 
for any $\al,\be,\ga\in\Om^1(M)$,\begin{equation}\label{eqdelta}
\prec\ga,\De(h)(\al,\be)\succ=\na_{h_\#(\be)}h(\al,\ga)-\na_{h_\#(\al)}h(\be,\ga),
\end{equation}and
\begin{equation}\label{eqnabla} \prec\ga,h_\#\left(\D_\al\be \right)\succ=\prec\ga,\na_{h_\#(\al)}h_\#(\be)\succ+\prec\be,\De(h)(\al,\ga)\succ. \end{equation}
 \end{pr}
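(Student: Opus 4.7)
The plan is to unwind $\Delta(h)$ using its definition and then exploit systematically the symmetry of $h$ together with the defining property of the dual connection.

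First I would record the key identity obtained by differentiating the pairing $h(\alpha,\beta)=\prec\alpha,h_\#(\beta)\succ$ along $X\in\mathcal{X}(M)$:
\[ \prec\alpha,\nabla_X h_\#(\beta)\succ=\nabla_Xh(\alpha,\beta)+h(\alpha,\nabla_X^*\beta), \]
where $\nabla^*$ is the dual connection on $T^*M$. This follows directly from \eqref{dual} applied to the tensor $h_\#$. Together with the symmetry $h(\alpha,\beta)=h(\beta,\alpha)$ this identity and the definition of $\mathcal{D}$ given in \eqref{connectionh} are essentially all the ingredients needed.

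For \eqref{eqdelta}, I would use $S=\nabla$ and $T=\mathcal{D}$, so that by definition
\[ \Delta(h)(\alpha,\beta)=h_\#\!\bigl(\nabla^*_{h_\#(\alpha)}\beta-\nabla^*_{h_\#(\beta)}\alpha\bigr)
-\nabla_{h_\#(\alpha)}h_\#(\beta)+\nabla_{h_\#(\beta)}h_\#(\alpha). \]
Pairing with $\gamma$, the first two terms become $h(\gamma,\nabla^*_{h_\#(\alpha)}\beta)-h(\gamma,\nabla^*_{h_\#(\beta)}\alpha)$ by symmetry of $h$, while the last two are rewritten with the key identity above. A direct cancellation then yields
\[ \prec\gamma,\Delta(h)(\alpha,\beta)\succ=\nabla_{h_\#(\beta)}h(\alpha,\gamma)-\nabla_{h_\#(\alpha)}h(\beta,\gamma), \]
which is \eqref{eqdelta}.

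For \eqref{eqnabla}, I would start from $\prec\gamma,h_\#(\mathcal{D}_\alpha\beta)\succ=\prec\mathcal{D}_\alpha\beta,h_\#(\gamma)\succ$ (symmetry of $h$) and apply \eqref{connectionh} with $X=h_\#(\gamma)$ to obtain
\[ \prec\gamma,h_\#(\mathcal{D}_\alpha\beta)\succ=\nabla_{h_\#(\gamma)}h(\alpha,\beta)+h\bigl(\nabla^*_{h_\#(\alpha)}\beta,\gamma\bigr). \]
On the other hand, \eqref{eqdelta} (already proved) gives
\[ \prec\beta,\Delta(h)(\alpha,\gamma)\succ=\nabla_{h_\#(\gamma)}h(\alpha,\beta)-\nabla_{h_\#(\alpha)}h(\gamma,\beta), \]
and the key identity gives $\prec\gamma,\nabla_{h_\#(\alpha)}h_\#(\beta)\succ=\nabla_{h_\#(\alpha)}h(\gamma,\beta)+h(\gamma,\nabla^*_{h_\#(\alpha)}\beta)$. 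Adding the last two displayed expressions reproduces the right-hand side of the first display, which establishes \eqref{eqnabla}.

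The only subtlety is bookkeeping: one has to be careful that the symmetry of $h$ is used in the right place to convert between $\prec\gamma,h_\#(\eta)\succ$ and $h(\gamma,\eta)=h(\eta,\gamma)$, and that $\mathcal{D}$ is plugged in as in \eqref{connectionh} rather than confused with $\nabla^*$. No new tensorial input (in particular, neither $\Delta(h)=0$ nor $\nabla^2\mathfrak a=0$) is needed for these two purely algebraic identities, so the whole argument is a direct, if slightly tedious, computation.
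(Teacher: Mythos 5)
Your proof is correct and follows essentially the same route as the paper: both arguments unwind $\De(h)$ and $\D$ from their definitions, use the duality relation between $\na$ and $\na^*$ together with the symmetry of $h$, and derive \eqref{eqnabla} from \eqref{eqdelta} in exactly the same way. Your packaging of the Leibniz-rule step as a single ``key identity'' merely reorganizes the cancellations the paper carries out inline.
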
\begin{proof} Let's compute
\begin{eqnarray*}
	\prec\ga,\De(h)(\al,\be)\succ&=&\prec\ga,
	h_\#([\al,\be]_{\D})\succ-\prec\ga,[h_\#(\al),h_\#(\be)]\succ\\
	&=&h(\ga,\na^*_{h_\#(\al)}\be)-h(\ga,\na^*_{h_\#(\be)}\al)-\prec\ga,[h_\#(\al),h_\#(\be)]\succ\\
	&=&\na_{h_\#(\be)}h(\al,\ga)-\na_{h_\#(\al)}h(\be,\ga)+h_\#(\al).h(\be,\ga)
	-h_\#(\be).h(\al,\ga)+h(\al,\na^*_{h_\#(\be)}\ga)-h(\be,\na^*_{h_\#(\al)}\ga)
	\\&&-\prec\ga,[h_\#(\al),h_\#(\be)]\succ\\
	&=& \na_{h_\#(\be)}h(\al,\ga)-\na_{h_\#(\al)}h(\be,\ga)+h_\#(\al).h(\be,\ga)
	-h_\#(\be).h(\al,\ga)+\prec \na^*_{h_\#(\be)}\ga,h_{\#}(\al)\succ-\prec \na^*_{h_\#(\al)}\ga,h_{\#}(\be)\succ   \\
	&&-\prec\ga,\na_{h_\#(\al)}h_\#(\be)\succ+\prec\ga,\na_{h_\#(\be)}h_\#(\al)\succ\\
	&=&\na_{h_\#(\be)}h(\al,\ga)-\na_{h_\#(\al)}h(\be,\ga).
\end{eqnarray*}Let's pursue 
\begin{eqnarray*}
	\prec\D_\al\be, h_\#(\ga)\succ&=&\na_{h_\#(\ga)}h(\al,\be)+h(\na_{h_\#(\al)}^*\be,\ga)\\
	&\stackrel{\eqref{eqdelta}}=&\na_{h_\#(\al)}h(\ga,\be)+h(\na_{h_\#(\al)}^*\be,\ga)+\prec\be,\De(h)(\al,\ga)\succ\\
	&=&h_\#(\al).h(\be,\ga)-h(\na_{h_\#(\al)}^*\ga,\be)+\prec\be,\De(h)(\al,\ga)\succ\\
	&=&\prec\ga,\na_{h_\#(\al)}h_\#(\be)\succ+\prec\be,\De(h)(\al,\ga)\succ.
\end{eqnarray*} \end{proof}
By using  Theorem \ref{theo1}, equation \eqref{eqdelta} and Proposition \ref{prm} we get the following theorem.

\begin{theo}\label{new}  Let $(M,\na)$ be an affine manifold and $(TM,M,Id_{TM},\na)$ its associated left symmetric algebroid. Let $h$ be a symmetric bivector field on $M$ and consider $\D$ the right anchored product given by \eqref{connectionh}. 
	Then $(T^*M,M,h_\#,\D)$ is a left symmetric  algebroid compatible with $(TM,M,Id_{TM},\na)$ if and only if, for any $\al,\be,\ga\in\Om^1(M)$,
	\begin{equation}\label{codazi} \na_{h_\#(\al)}h(\be,\ga)-
	\na_{h_\#(\be)}h(\al,\ga)=0. \end{equation}In this case,  $(TM\oplus T^*M,M,[\;,\;]^{\triangleright},Id_{TM}+0,\prs_h,K_h)$ is a para-K\"ahler Lie algebroid, where
	\[ [X+\al,Y+\be]^\triangleright=[X,Y]+\na_X^*\be-\na_Y^*\al,\; \langle X+\al,Y+\be\rangle_h=\prec\al,Y\succ+\prec\be,X\succ
	-2h(\al,\be)\esp K_h(X+\al)=X-2h_\#(\al)-\al. \]Moreover, the associated symplectic form $\Om_0$ is given by
	\[ \Om_0(X+\al,Y+\be)=\prec\be,X\succ-\prec\al,Y\succ. \]

\end{theo}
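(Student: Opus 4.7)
The plan is to derive Theorem \ref{new} by specializing the general machinery of Sections \ref{section4} and \ref{section5} to the case $A=TM$, $\rho=\mathrm{Id}_{TM}$, $S=\nabla$, and $\mathrm{r}=h$ purely symmetric (so $\mathfrak{s}=h$ and $\mathfrak{a}=0$). Because $\mathrm{Id}_{TM}$ is trivially injective on every fibre, Corollary \ref{co1} applies and the left symmetric compatibility of $(T^*M,M,h_\#,\D)$ with $(TM,M,\mathrm{Id}_{TM},\nabla)$ reduces to the pair of conditions $\De(h)=0$ and $S^2\mathfrak{a}=0$. The latter vanishes identically since $\mathfrak{a}=0$. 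By the formula \eqref{eqdelta} of the preceding proposition, $\De(h)=0$ is equivalent to $\na_{h_\#(\be)}h(\al,\ga)-\na_{h_\#(\al)}h(\be,\ga)=0$ for all $\al,\be,\ga\in\Om^1(M)$, which is exactly the Codazzi-type identity \eqref{codazi}. This settles the ``if and only if'' part.

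For the concluding statement, once \eqref{codazi} holds one verifies that $h$ is a symmetric quasi $\S$-matrix in the sense of Definition \ref{def}: the conditions $\rho\circ\De(h)=0$ and $Q_X\De(h)=0$ hold trivially because $\De(h)=0$, and $S\mathfrak{a}=0$ is automatic. Proposition \ref{prm} then produces a para-K\"ahler Lie algebroid on $\Phi(TM)=TM\oplus T^*M$ with anchor $\mathrm{Id}_{TM}\oplus 0$. The bracket \eqref{eqnb} reduces to $[\;,\;]^\triangleright$ because $\De(h)=0$, and Example \ref{exemple1} gives at once $[X+\al,Y+\be]^\triangleright=[X,Y]+\na_X^*\be-\na_Y^*\al$. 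Substituting $\mathfrak{s}=h$ and $\mathrm{r}_\#=h_\#$ into the expressions for $\prs_r$ and $K_r$ from Proposition \ref{prm} yields the stated formulas for $\prs_h$ and $K_h$.

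Finally, the symplectic form on the para-K\"ahler Lie algebroid is by definition $\Om_{K_h}(a,b)=\langle K_h a,b\rangle_h$; a short direct computation shows that the extra terms involving $h_\#(\al)$ and $h(\al,\be)$ cancel, leaving $\Om_{K_h}(X+\al,Y+\be)=\prec\be,X\succ-\prec\al,Y\succ=\Om_0(X+\al,Y+\be)$, as claimed. No genuine obstacle arises here: the only care needed is bookkeeping to confirm that every auxiliary hypothesis of Corollary \ref{co1}, Definition \ref{def}, and Proposition \ref{prm} is either vacuous (thanks to $\mathfrak{a}=0$) or equivalent to the single identity \eqref{codazi}, so that the whole statement collapses onto this one Codazzi-type condition.
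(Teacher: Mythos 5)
Your proof is correct and follows essentially the same route as the paper, which obtains the theorem by combining Theorem \ref{theo1} (here invoked via its specialization, Corollary \ref{co1}, legitimate since $\mathrm{Id}_{TM}$ is injective), the identity \eqref{eqdelta} to translate $\De(h)=0$ into the Codazzi condition \eqref{codazi}, and Proposition \ref{prm} for the para-K\"ahler structure. The final computation of $\Om_{K_h}=\Om_0$ also checks out.
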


\begin{remark} This theorem deserves some comments. Indeed, the theorem asserts that, given   an affine manifold $(M,\na)$  and  a symmetric bivector field $h$ satisfying \eqref{codazi}, we have:
	\begin{enumerate}
		\item $(T^*M,M,h_\#,\D)$ is a left symmetric  algebroid,
		\item $(TM\oplus T^*M,M,[\;,\;]^{\triangleright},Id_{TM}+0,\prs_h,\Om_0,K_h)$ is a para-K\"ahler Lie algebroid.
	\end{enumerate} These two results are a	consequence of  a long path based on all the results and the constructions performed before. The first assertion introduces a  class of left symmetric algebroids and hence a class of Lie algebroids which, to our knowledge, has not been considered before. We will devote the reminder of this section to the study of this class. Also, to our knowledge, the class of para-K\"ahler Lie algebroids introduced in 2. has not been considered before. \\
	Now once the results are available, one can prove the assertions 1. and 2. directly. 
	For the first assertion, one can compute (a huge computation identical to the one in Lemma \ref{lemma}) the curvature of $\D$ and show that it vanishes. For the  second assertion, we can use Proposition \ref{equi} and show either
	 that the Nijenhuis torsion of $K_h$ vanishes and $\Om_0$ is closed with respect to $[\;,\;]^{\triangleright}$ or show  that $K_h$ is parallel with respect to the Levi-Civita connection. We have seen in Example \ref{exemple1} that $\Om_0$ is parallel with respect to the Lie-admissible connection $\na^0_{X+\al}(Y+\be)=\na_{X}Y+\na_{X}^*\be$ and hence it is closed. To show that the Nijenhuis torsion with respect to $[\;,\;]^{\triangleright}$ vanishes is an easy computation using that $\De(h)=0$.
	 However, one must point out that $\na^0$ is not the Levi-Civita connection $\overline{\na}$ of $(TM\oplus T^*M,M,[\;,\;]^{\triangleright},Id_{TM}+0,\prs_h)$ and a straightforward computation gives
	 \[ \overline{\na}_XY=\na_XY,\;\overline{\na}_X\al=\na_X^*\al-(\na_Xh)_\#(\al),\;
	 \overline{\na}_\al X=-(\na_Xh)_\#(\al)\esp
	 \overline{\na}_\al\be=-2(\na_{h_\#(\al)}h)_\#(\be)+\na_{h_\#(\al)}^*\be-\D_\al\be.  \]
	 With this formula one can check that $\overline{\na}K_h=0$.
	
\end{remark}

We will give now  other characterizations of bivector fields satisfying \eqref{codazi}.
\begin{pr}\label{hamilton} Let $(M,\na)$ a manifold endowed with a torsionless connection (we don't need to suppose that $\na$ is flat). Le $h$ be a symmetric bivector field on $M$. For any $f\in C^\infty(M)$, put $X_f=h_\#(df)$. Then the following assertions are equivalent.
\begin{enumerate}\item[$(i)$] $h$ satisfies \eqref{codazi}.  
	\item[$(ii)$] For any $f,g\in C^\infty(M)$ and any $\al\in\Om^1(M)$, $d\al(X_f,X_g)=\na_{X_f}h_\#(\al)(g) -\na_{X_g}h_\#(\al)(f)$.
	\item[$(iii)$] For any $f,g,\mu\in C^\infty(M)$, $\na_{X_f}X_\mu(g)= \na_{X_g}X_\mu(f)$.
	\item[$(iv)$]
	For any $x\in M$, there exists a coordinates system $(x_1,\ldots,x_n)$ around $x$ such that for any $1\leq k\leq n$ and $1\leq i<j\leq n$, $\na_{X_{x_i}}X_{x_k}(x_j)= \na_{X_{x_j}}X_{x_k}(x_i)$.
\end{enumerate}	
	
\end{pr}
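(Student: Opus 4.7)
My plan is to route everything through the tensor $\De(h) \in \Ga(T^*M\otimes T^*M\otimes TM)$: by \eqref{eqdelta}, condition $(i)$ reads $\prec \ga, \De(h)(\al,\be)\succ = \na_{h_\#(\be)}h(\al,\ga) - \na_{h_\#(\al)}h(\be,\ga) = 0$ for all $\al,\be,\ga \in \Om^1(M)$, and since $\De(h)$ is tensorial, this is equivalent to the pointwise vanishing $\De(h) = 0$. I will then reduce each of $(ii)$, $(iii)$, $(iv)$ to this same tensor equation.

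For the central step $(i) \Leftrightarrow (iii)$, I will specialize $\al = df$, $\be = dg$, $\ga = d\mu$ in \eqref{codazi} and expand $(\na_{X_f}h)(dg,d\mu) - (\na_{X_g}h)(df,d\mu)$ by the Leibniz rule. The symmetry of $h$ yields $X_g(\mu) = h(dg,d\mu) = X_\mu(g)$, so $X_f(X_g(\mu)) = X_f(X_\mu(g))$; the torsion-free identity $\na_{X_f}X_g - \na_{X_g}X_f = [X_f,X_g]$ together with $(\na^*_X dg)(Y) = X(Y(g)) - (\na_X Y)(g)$ then allow the remaining terms to recombine so that \eqref{codazi} collapses exactly to $(\na_{X_f}X_\mu)(g) = (\na_{X_g}X_\mu)(f)$, which is $(iii)$. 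Conversely, $(iii)$ says $\prec d\mu, \De(h)(df,dg)\succ = 0$ for all $f,g,\mu$, and since at any $x\in M$ the covectors $df_x, dg_x, d\mu_x$ exhaust $T^*_xM$, tensoriality of $\De(h)$ forces $\De(h)(x) = 0$ for every $x$, hence $(i)$.

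The equivalence $(iii) \Leftrightarrow (iv)$ follows immediately: $(iii)\Rightarrow(iv)$ is tautological, while $(iv)$ states $\prec dx_k, \De(h)(dx_i,dx_j)\succ = 0$ on a coordinate neighborhood (extended from $i<j$ to $i>j$ via the antisymmetry $\De(h)(\al,\be) = -\De(h)(\be,\al)$), so since $\{dx_i\}$ is a local frame of $T^*M$, tensoriality of $\De(h)$ gives $\De(h) \equiv 0$ near every point and hence on $M$, that is $(i)$, whence $(iii)$.

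For $(i) \Leftrightarrow (ii)$, I will expand $\na_{X_f}(h_\#(\al)) = (\na_{X_f}h_\#)(\al) + h_\#(\na^*_{X_f}\al)$, pair with $dg$, and use $\prec dg, (\na_{X_f}h_\#)(\al)\succ = (\na_{X_f}h)(\al,dg)$ to obtain $(\na_{X_f}h_\#(\al))(g) = (\na_{X_f}h)(\al,dg) + (\na^*_{X_f}\al)(X_g)$. Substituting into $(ii)$ and invoking the torsion-free formula $d\al(X_f,X_g) = (\na^*_{X_f}\al)(X_g) - (\na^*_{X_g}\al)(X_f)$, the $d\al$ terms cancel and $(ii)$ reduces to $(\na_{X_f}h)(\al,dg) = (\na_{X_g}h)(\al,df)$; by the symmetry of $h$ and \eqref{eqdelta}, this is $\prec \al, \De(h)(df,dg)\succ = 0$, which after ranging over $\al, f, g$ becomes $\De(h) = 0$. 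The main obstacle I anticipate is the bookkeeping inside $(i) \Leftrightarrow (iii)$, where both the torsion-freeness of $\na$ and the symmetry of $h$ must conspire to cancel all second-order terms in $(\na_{X_f}h)(dg,d\mu)$ and produce exactly the form $(\na_{X_f}X_\mu)(g)$.
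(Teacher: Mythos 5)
Your proof is correct and follows essentially the same route as the paper: both reduce all four conditions to the tensorial Codazzi equation \eqref{codazi} (equivalently $\De(h)=0$, via \eqref{eqdelta}) through the identity $\na_{X_f}h(dg,\al)-\na_{X_g}h(df,\al)= -d\al(X_f,X_g)+\na_{X_f}h_\#(\al)(g) -\na_{X_g}h_\#(\al)(f)$ and then invoke pointwise tensoriality. The paper proves this single identity once and lets it cover $(ii)$, $(iii)$ (the case $\al=d\mu$) and $(iv)$ simultaneously, whereas you carry out the parallel expansions separately, but the substance is identical.
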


\begin{proof} We have
	\begin{eqnarray*}
		\na_{X_f}h(dg,\al)&=&X_f.h(dg,\al)-\prec\na_{X_f}^*dg,h_\#(\al)\succ-\prec\na_{X_f}^*\al,X_g\succ\\
		&=&X_f.h(dg,\al)-X_f.h(dg,\al)+\na_{X_f}h_\#(\al)(g)-X_f.h(dg,\al)+\al(\na_{X_f}X_g)\\
	&=&	-X_f.\al(X_g)+\na_{X_f}h_\#(\al)(g)+\al(\na_{X_f}X_g).
	\end{eqnarray*}Thus
	\[\na_{X_f}h(dg,\al)-\na_{X_g}h(df,\al)= -d\al(X_f,X_g)+\na_{X_f}h_\#(\al)(g) -\na_{X_g}h_\#(\al)(f). \]
This relation and the fact that \eqref{codazi} is tensorial permit to prove the proposition.
\end{proof}

\begin{remark}  Let $(M,\na)$ as in Proposition \ref{hamilton} and $h$ satisfying \eqref{codazi}. By using $(iii)$ of Proposition \ref{hamilton}, we get for any $f,g,\mu\in C^\infty(M)$,
\begin{eqnarray*}
\;[X_f,X_g](\mu)&=&\na_{X_f}X_g(\mu)-\na_{X_g}X_f(\mu)\\
&=&\na_{X_\mu}X_g(f)-\na_{X_\mu}X_f(g)\\
&=&[X_\mu,X_g](f)+[X_f,X_\mu](g)+\na_{X_g}X_\mu(f)-\na_{X_f}X_\mu(g)\\
&=&[X_\mu,X_g](f)+[X_f,X_\mu](g).
\end{eqnarray*}	Thus
\begin{equation}\label{jacobi}
[X_f,X_g](\mu)+[X_g,X_\mu](f)+[X_\mu,X_f](g)=0.
\end{equation}So the triple product $\{.,.,.\}:C^\infty(M)\times C^\infty(M)\times C^\infty(M)\too C^\infty(M)$ given by $\{f,g,\mu\}=[X_f,X_g](\mu)$ satisfies:
\begin{enumerate}
	\item $\{f,g,\mu\}=-\{g,f,\mu\}$,
	\item $\{f,g,\mu\}+\{g,\mu,f\}+\{\mu,f,g\}=0$,
	\item $\{f,g,\mu_1\mu_2\}=\{f,g,\mu_1\}\mu_2+\{f,g,\mu_2\}\mu_1$.
\end{enumerate}
Note that we have a similar situation on a Poisson manifold. Indeed, if $M$ is a manifold endowed with a Poisson bracket $\{\;,\;\}$ then the triple product $\langle\;,\;,\;\rangle$ given by
$\langle f,g,\mu\rangle=\{\{f,g\},\mu  \}$ satisfies the relations 1.,2.,3. above.

\end{remark}

\paragraph{Generalized pseudo-Hessian manifolds}
We will show now that the triple $(M,\na,h)$ satisfying \eqref{codazi} are a generalization of a well-known structure, namely, a pseudo-Hessian structure.  Recall that a pseudo-Hessian manifold  (see \cite{shima}) is a triple $(M,\na,g)$ where $\na$ is a
flat torsionless connection  and $g$ is a pseudo-Riemannian metric  is given locally by $g=\na d\phi$ where $\phi$ is a local function. This is equivalent to $S:=\na g$ is totally symmetric, i.e., $(\na,g)$  satisfying the Codazzi equation
\begin{equation}\label{codazzi}\na_Xg(Y,Z)=\na_Yg(X,Z).\end{equation}
If we put $h=g^{-1}$ and take $X=X_v$, $Y=X_v$ and $Z=X_w$ with $u,v,w\in C^\infty(M)$, on can see easily that this equation is equivalent to $\na_{X_u}X_w(v)=\na_{X_v}X_w(u)$ and hence, by virtue of Proposition \ref{hamilton}, $g$ satisfies Codazzi equation if and only if $g^{-1}$ satisfies \eqref{codazi}. There is a subclass of the class of pseudo-Hessian manifolds, namely, the subclass of affine special real manifolds which appeared in physics. A pseudo-Hessian manifold $(M,\na,g)$ is called affine special real manifold if, in addition, $S$ is parallel. In \cite{cortes}, this subclass has been studied in detail and, in particular, the $r$-map which associate to any pseudo-Hessian manifold $(M,\na,g)$ a natural pseudo-K\"ahlerian structure on $TM$ has been scrutinized. By virtue of what above, the following definition is natural.

\begin{Def} We call a triple $(M,\na,h)$ where $\na$ is torsionless flat and $h$  satisfying \eqref{codazi} a generalized pseudo-Hessian manifold. If, in addition, the tensor field $T$ given by $T(\al,\be,\ga)=\na_{h_\#(\al)}h(\be,\ga)$ is parallel with respect to the anchored product $\D$ given by \eqref{connectionh}, we call $(M,\na,h)$ generalized affine special real manifold.
	
\end{Def}
The following theorem shows a similarity between Poisson manifolds and pseudo-Hessian manifolds.
 
\begin{theo}\label{poissonbis} Let $(M,\na,h)$ be generalized pseudo-Hessian  manifold. Then $\mathrm{Im}h_\#$ is integrable and defines a singular foliation on $M$ such that for every leaf $L$ we have:
	\begin{enumerate}
		\item[$(i)$] For every vector fields $X,Y$ tangent to $L$, $\na_XY$ is tangent to $L$,
		\item[$(ii)$] $L$ has a natural  pseudo-Hessian structure. Moreover, if $(M,\na,h)$ is a generalized affine special real manifold then $L$ is an affine special real manifold.
		
	\end{enumerate}
	
\end{theo}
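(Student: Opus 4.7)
My plan is to exploit the Lie algebroid structure on $T^*M$ given by Theorem \ref{new}. Since $(T^*M, M, h_\#, \D)$ is a Lie algebroid, its characteristic distribution $\im h_\#$ is integrable in the sense of Sussmann (property $(b)$ following the Lie-algebroid definition in Section \ref{section2}), which produces the claimed singular foliation. The key identity behind assertion $(i)$ is
\[ h_\#(\D_\al\be) = \na_{h_\#(\al)} h_\#(\be), \]
which follows at once from \eqref{eqnabla} and the vanishing of $\De(h)$---itself equivalent to the Codazzi-type identity \eqref{codazi} via \eqref{eqdelta}. Any vector field tangent to a leaf $L$ can be written locally as $h_\#(\al)$ for some 1-form $\al$ defined near $L$, so this identity shows that $\na_X Y$ remains tangent to $L$ whenever $X, Y$ do; the restriction $\na^L$ of $\na$ is therefore a well-defined flat torsion-free connection on $L$.

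For assertion $(ii)$, I define the metric on $L$ by $g_L(h_\#(\al), h_\#(\be)) := h(\al, \be)$. Well-definedness at each point $x\in L$ follows from the tautology $h(\mu, \be)(x) = \prec \be(x), h_\#(\mu)(x)\succ$, which vanishes whenever $\mu\in\ker h_\#(x)$; non-degeneracy follows from $T_xL = h_\#(T_x^*M)$; smoothness follows from local lifting of tangent vector fields to 1-forms; and, since the rank of $h_\#$ is constant on $L$, the signature of $g_L$ is locally constant (in any family of symmetric bilinear forms of fixed rank the signature is locally constant) and hence constant on the connected leaf $L$. To verify the Codazzi equation for $(L, \na^L, g_L)$, I take $X = h_\#(\al), Y = h_\#(\be), Z = h_\#(\ga)$, use $\na^L_X Y = h_\#(\D_\al \be)$, and expand $h(\D_\al \be, \ga) + h(\be, \D_\al\ga)$ via \eqref{connectionh}; after cancelling the free term $h_\#(\al).h(\be,\ga)$ against the pairings with $\na^*_{h_\#(\al)}$, the computation yields
\[ \na^L_X g_L(Y, Z) = \na_{h_\#(\al)} h(\be, \ga) - \na_{h_\#(\be)} h(\al, \ga) - \na_{h_\#(\ga)} h(\al, \be). \]
One application of \eqref{codazi} kills the first two terms, leaving $-\na_Z h(\al, \be)$, which is manifestly symmetric under $X\leftrightarrow Y$ by symmetry of $h$.

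For the affine special real case, the same identification gives $\na^L g_L(X, Y, Z) = -T(\ga, \al, \be)|_L$ with $T$ the tensor of the hypothesis. Combining $\D T = 0$ with the intertwining identity $h_\#(\D_\al\be) = \na_{h_\#(\al)}h_\#(\be)$, which relates $\D$ on $T^*M$ to $\na^L$ on $TL$ via $h_\#$, one propagates $\D$-parallelism of $T$ to $\na^L$-parallelism of $S_L := \na^L g_L$ by a direct tensorial computation. I expect the main obstacle to be the Codazzi bookkeeping; the independence of the final expression $-\na_Z h(\al, \be)|_L$ from the 1-form extensions chosen for $X, Y, Z$ is guaranteed \emph{a posteriori} by the intrinsic character of the left-hand side, but can also be checked directly using the fact that $\D_\de\mu \in \ker h_\#$ whenever $\mu\in\ker h_\#$ (which follows from the same intertwining identity applied to $h_\#(\mu) = 0$).
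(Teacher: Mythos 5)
Your proposal follows exactly the paper's route: integrability via the Lie algebroid $(T^*M,M,h_\#,\D)$ from Theorem \ref{new}, the intertwining identity $h_\#(\D_\al\be)=\na_{h_\#(\al)}h_\#(\be)$ from \eqref{eqnabla} to get the induced affine structure, and the metric $g_L(h_\#(\al),h_\#(\be))=h(\al,\be)$ on each leaf. The only difference is that you actually carry out the Codazzi verification (arriving correctly at $\na^L_Xg_L(Y,Z)=-\na_Zh(\al,\be)$ after applying \eqref{codazi}) where the paper merely says ``one can check easily''; the argument is correct.
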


\begin{proof}  According to Theorem \ref{new}, $(T^*M,M,h_\#,\D)$ is a left symmetric Lie algebroid and hence $(T^*M,M,h_\#,\br_\D)$ is a Lie algebroid. This implies that $\mathrm{Im}h_\#$ is integrable and defines a singular foliation on $M$. Each leaf $L$ carries a pseudo-Riemannian metric $g_L$ given by 
	$g_L(h_\#(\al),h_\#(\be))=h(\al,\be).$ On the other hand, \eqref{eqnabla} shows
	that any leaf $L$   carries an affine structure $\na^L$ and one can check easily that $(\na^L,g_L)$ is a pseudo-Hessian structure on $L$ which is affine special real when $(M,\na,h)$ is. 
\end{proof}

\begin{exem} Consider $\R^n$ endowed with its canonical affine structure $\na$ and denote by $(x_1,\ldots,x_r,y_1,\ldots,y_{n-r})$ its canonical linear coordinates. Let $f\in C^\infty(M)$ such that the matrix $\left(\frac{\partial^2 f}{\partial x_i\partial x_j} \right)$ is invertible  and put
	\[ h=\sum_{i,j=1}^rh_{ij}\partial_{x_i}\otimes\partial_{x_j}, \]where $(h_{ij})$ is the inverse of the matrix $\left(\frac{\partial^2 f}{\partial x_i\partial x_j} \right).$ Then $(\R^n,\na,h)$ is a generalized pseudo-Hessian structure. This is a consequence of the following proposition.

\end{exem}

\begin{pr}\label{prfoliation} Let $(M,\na)$ be an affine manifold and $i:\mathcal{F}\too TM$ a subbundle such that, for any $X,Y\in\Ga(\mathcal{F})$, $\na_XY\in \Ga(\mathcal{F})$. Suppose that there exists $\phi\in C^\infty(M)$ such that $g$ given by $g_x(u,v)=\na_ud\phi(v)$ for any $x\in M$ and any $u,v\in \mathcal{F}_x$ is nondegenerate symmetric bilinear form on $\mathcal{F}_x$. Then $h_\#=i\circ \#\circ i^*$, where $\#:\mathcal{F}^*\too\mathcal{F}$ is the isomorphism associated to $g$, defines a generalized pseudo-Hessian structure on $(M,\na)$. 
\end{pr}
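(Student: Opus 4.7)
The plan is to verify the two defining conditions of a generalized pseudo-Hessian structure: symmetry of $h$ and the Codazzi-type identity \eqref{codazi}. Symmetry is immediate. Indeed, identifying $\mathcal{F}$ with $i(\mathcal{F})$, one has $h(\al,\be)=\prec\be,h_\#(\al)\succ=g(h_\#(\be),h_\#(\al))$, which is symmetric in $\al,\be$ because $g$ is. So the real content is to establish \eqref{codazi}, and I would do this in two steps.

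\emph{Step 1 (Reduction to a Codazzi identity on $\mathcal{F}$).} I would first show that for every $X\in\Ga(\mathcal{F})$ and every $\be,\ga\in\Om^1(M)$,
\[
\na_X h(\be,\ga)=-(\na_X g)(h_\#(\be),h_\#(\ga)).
\]
Setting $Y=h_\#(\be)$, $Z=h_\#(\ga)$, both in $\Ga(\mathcal{F})$, the stability of $\mathcal{F}$ under $\na$ gives $\na_X Y,\na_X Z\in\Ga(\mathcal{F})$, so $Xh(\be,\ga)=Xg(Y,Z)=(\na_X g)(Y,Z)+g(\na_XY,Z)+g(Y,\na_XZ)$. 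Next, applying the defining formula $\prec\na_X^*\be,W\succ=X\prec\be,W\succ-\prec\be,\na_XW\succ$ with $W\in\Ga(\mathcal{F})$, and using $\prec\be,W\succ=g(Y,W)$ together with $\na_X W\in\Ga(\mathcal{F})$, one derives $g(h_\#(\na_X^*\be),W)=(\na_Xg)(Y,W)+g(\na_XY,W)$, hence $h(\na_X^*\be,\ga)=(\na_Xg)(Y,Z)+g(\na_XY,Z)$, and analogously for the other slot. The three terms in $\na_Xh(\be,\ga)=Xh(\be,\ga)-h(\na_X^*\be,\ga)-h(\be,\na_X^*\ga)$ telescope to the claimed identity.

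\emph{Step 2 (Codazzi identity for $g$ on $\mathcal{F}$).} I would then prove $(\na_Xg)(Y,Z)=(\na_Yg)(X,Z)$ for all $X,Y,Z\in\Ga(\mathcal{F})$. Since $g=(\na d\phi)|_{\mathcal{F}\otimes\mathcal{F}}$ and $\na_XY\in\Ga(\mathcal{F})$, a direct expansion gives $(\na_Xg)(Y,Z)=(\na_X\na_Y d\phi)(Z)-(\na_{\na_XY}d\phi)(Z)$. Antisymmetrizing in $X,Y$, flatness of $\na$ (the Ricci identity with vanishing curvature) yields $(\na_X\na_Yd\phi-\na_Y\na_Xd\phi)(Z)=(\na_{[X,Y]}d\phi)(Z)$, and torsion-freeness gives $\na_XY-\na_YX=[X,Y]$; the two contributions cancel. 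Applying this identity with $X=h_\#(\al)$, $Y=h_\#(\be)$, $Z=h_\#(\ga)$ and substituting Step 1 gives $\na_{h_\#(\al)}h(\be,\ga)=\na_{h_\#(\be)}h(\al,\ga)$, which is \eqref{codazi}; Theorem \ref{new} then concludes.

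\emph{Anticipated obstacle.} The subtle point is the algebraic identification of $h_\#(\na_X^*\be)$ in Step 1: it is \emph{not} equal to $\na_Xh_\#(\be)$, but differs from it by the $g$-dual of $(\na_Xg)(h_\#(\be),\cdot)$, and this discrepancy is exactly what produces the overall minus sign in Step 1 and, combined with the symmetry of Step 2, the final cancellation. Care is required throughout to exploit $\na$-stability of $\mathcal{F}$ whenever one switches between the natural $TM$\,/\,$T^*M$ pairing and the pairing through $g$ on $\mathcal{F}$.
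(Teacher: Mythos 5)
Your proposal is correct and follows essentially the same route as the paper: both first establish the identity $\na_{h_\#(\al)}h(\be,\ga)=-(\na_Xg)(Y,Z)$ with $X=h_\#(\al)$, $Y=h_\#(\be)$, $Z=h_\#(\ga)$, exploiting the $\na$-stability of $\mathcal{F}$, and then verify the Codazzi symmetry of $\na g$ on $\mathcal{F}$ from $g=\na d\phi$ using flatness and torsion-freeness (your Ricci-identity phrasing versus the paper's direct expansion in $d\phi$ is only a cosmetic difference). Your remark on the discrepancy between $h_\#(\na_X^*\be)$ and $\na_Xh_\#(\be)$ correctly identifies the source of the minus sign.
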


\begin{proof} For any $\al,\be,\ga\in\Om^1(M)$, put $X=h_\#(\al)$, $Y=h_\#(\be)$ and $Z=h_\#(\ga)$. There are three vector fields tangent to $\mathcal{F}$. Since $\na_XY$ and $\na_XZ$ are tangent to $\mathcal{F}$ then
	\[ \na_{h_\#(\al)}h(\be,\ga)=-h_\#(\al).h(\be,\ga)+\prec\be,\na_{h_\#(\al)}h_\#(\ga)\succ +
	\prec\ga,\na_{h_\#(\al)}h_\#(\be)\succ=-\na_Xg(Y,Z).\]
	 Now
	\begin{eqnarray*}
		\na_Xg(Y,Z)-\na_Yg(X,Z)&=&X.\na d\phi(Y,Z)-Y.\na d\phi(X,Z)-\na d\phi(\na_XY-\na_YX,Z)
		-\na d\phi(Y,\na_XZ)+\na d\phi(X,\na_YZ)\\
		&=&[X,Y].d\phi(Z)-X.d\phi(\na_YZ)+Y.d\phi(\na_XZ)-[X,Y].d\phi(Z)+d\phi(\na_{[X,Y]}Z)\\&&
		-Y.d\phi(\na_XZ)+d\phi(\na_Y\na_XZ)
		+X.d\phi(\na_YZ)-d\phi(\na_{X}\na_YZ)\\
		&=&0.\qedhere
	\end{eqnarray*}

\end{proof}

The following proposition is a generalization of  Lemma 2.1 in \cite{furness}. The proof we give here is different. 

\begin{pr}\label{local2} Let $(M,\na)$ be an affine manifold and $i:D\too TM$ a subbundle such that, for any $X,Y\in\Ga(D)$, $\na_XY\in \Ga(D)$. Then, for any $m\in M$, there exists a coordinates system $(x_1,\ldots,x_r,y_1,\ldots,y_{n-r})$ on an open set around $m$ such that, for any $p\in U$,
	\[ D(p)=\mathrm{span}\{\partial_{x_1}(p),\ldots,\partial_{x_r}(p)   \}\esp \quad\na_{\partial_{x_i}}\partial_{x_j}=0,\;i,j=1,\ldots,r. \]
	
\end{pr}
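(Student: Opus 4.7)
The plan is to combine Frobenius with a Poincaré-type construction of parallel sections of $D$. First I would note that $D$ is involutive: since $\na$ is torsion-free and $\na_X Y, \na_Y X \in \Ga(D)$ whenever $X,Y \in \Ga(D)$, we have $[X,Y]=\na_X Y-\na_Y X \in \Ga(D)$. Hence Frobenius furnishes  coordinates $(x_1,\ldots,x_r,y_1,\ldots,y_{n-r})$ on some open $U$ around $m$ with $D=\spa\{\partial_{x_1},\ldots,\partial_{x_r}\}$. In these coordinates, the assumption $\na_{\partial_{x_i}}\partial_{x_j} \in \Ga(D)$ yields Christoffel coefficients $\Ga_{ij}^k$ with $\na_{\partial_{x_i}}\partial_{x_j}=\sum_{k=1}^r\Ga_{ij}^k\partial_{x_k}$.

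Next, I would build a parallel frame of $D$ exactly as in Lemma \ref{sa�d}. Fix a basis $v_1,\ldots,v_r$ of $D_m$ and seek sections $X_\ell=\sum_k f_k^\ell \partial_{x_k}$ of $D$ solving
\[ \frac{\partial f_k^\ell}{\partial x_j}=-\sum_{i=1}^r f_i^\ell \Ga_{ji}^k,\qquad X_\ell(m)=v_\ell, \]
with $y$ playing the role of parameter. This is a linear first-order PDE system; its integrability condition is precisely the vanishing of the partial curvature $R^\na(\partial_{x_i},\partial_{x_j})$ acting on sections of $D$, which holds because $\na$ is flat. Shrinking $U$ if necessary, the system admits the desired smooth solutions $X_1,\ldots,X_r \in \Ga(D_{|U})$, still linearly independent near $m$.

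The construction guarantees $\na_{\partial_{x_j}}X_\ell=0$ for every $j=1,\ldots,r$, and by $C^\infty(U)$-linearity in the first argument this upgrades to $\na_{X_i}X_\ell=0$ for all $i,\ell$. Torsion-freeness then gives
\[ [X_i,X_\ell]=\na_{X_i}X_\ell-\na_{X_\ell}X_i=0, \]
so $X_1,\ldots,X_r$ is a family of pairwise commuting vector fields, tangent to the leaves $\{y=\text{const}\}$, linearly independent at $m$.

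Finally I would invoke the simultaneous flow-box theorem for commuting vector fields: flowing along $X_1,\ldots,X_r$ from the transversal $\{x=0\}$ produces a local diffeomorphism $(\tilde x_1,\ldots,\tilde x_r,y_1,\ldots,y_{n-r})\mapsto\phi_{\tilde x_1}^{X_1}\circ\cdots\circ\phi_{\tilde x_r}^{X_r}(0,y)$ whose inverse furnishes coordinates with $\partial_{\tilde x_i}=X_i$ (the $y$-coordinates are preserved because the $X_i$'s are tangent to $D$). In these coordinates $D=\spa\{\partial_{\tilde x_1},\ldots,\partial_{\tilde x_r}\}$ and $\na_{\partial_{\tilde x_i}}\partial_{\tilde x_j}=\na_{X_i}X_j=0$, as required. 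The only non-routine step is verifying the solvability of the PDE in the second paragraph, but flatness of $\na$ makes the integrability condition automatic, so no genuine obstacle arises.
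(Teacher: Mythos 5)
Your proof is correct and follows essentially the same route as the paper's: both construct a local $\na$-parallel frame $X_1,\ldots,X_r$ of $D$ (the paper by invoking Lemma \ref{sa�d} applied to the left symmetric algebroid $(D,M,i,\na_{|D})$, you by solving the same linear PDE system directly in Frobenius-adapted coordinates, with flatness guaranteeing integrability), observe that torsion-freeness forces $[X_i,X_j]=\na_{X_i}X_j-\na_{X_j}X_i=0$, and then straighten the commuting frame into coordinate vector fields. Your version merely unpacks the lemma the paper cites, so there is no substantive difference.
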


\begin{proof} Denote by $\na^1$   the restriction of $\na$ to $D$. Then $(D,M,i,\na^1)$ is a left symmetric algebroid. Moreover, $\na^1$ satisfies the hypothesis of  Lemma \ref{saïd}. Let $(e_1,\ldots,e_r)$ be a basis of $D(m)$. By virtue of Lemma \ref{saïd}, there exists a family of local vector fields $X_1,\ldots,X_r$ tangent to $D$ such that $\na^1X_i=0$, $X_i(m)=e_i$, $i=1,\ldots,r$. For any, $i,j=1,\ldots,r$, $[X_i,X_j]=\na_{X_i}X_j-\na_{X_j}X_i=0$ and the proposition follows by 	applying Frobenius's Theorem.
	\end{proof}

The following theorem shows that any generalized pseudo-Hessian structure is locally as Proposition \ref{prfoliation} near any regular point. This can be compared to Darboux-Weinstein near a regular point in Poisson geometry.
\begin{theo} \label{local} Let $(M,\na,h)$ be a generalized pseudo-Hessian structure and $x\in M$ such the rank of $h_\#$ is constant in a neighborhood of $x$. Then there exists a chart $(x_1,\ldots,x_r,y_1,\ldots,y_{n-r})$ and a function $f(x,y)$ such that
	\[ h=\sum_{i,j=1}^rh_{ij}\partial_{x_i}\otimes\partial_{x_j}, \quad\na_{\partial_{x_i}}\partial_{x_j}=0,\; i,j=1,\ldots,r, \]and the matrix $(h_{ij})$ is invertible and its inverse is the matrix $\left(\frac{\partial^2 f}{\partial x_i\partial x_j} \right).$
	
\end{theo}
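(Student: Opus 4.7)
My plan is to reduce the statement to a chart where $\mathrm{Im}\,h_\#$ is spanned by $\na$-flat coordinate vector fields, and then invoke Poincar\'e's lemma slice by slice.

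\emph{Step 1: the adapted chart.} Since the rank of $h_\#$ is locally constant around $x$, the image $D:=\mathrm{Im}\,h_\#$ is a smooth rank-$r$ subbundle on a neighborhood of $x$. The assumption that $(M,\na,h)$ is a generalized pseudo-Hessian manifold means that \eqref{codazi} holds, which by \eqref{eqdelta} is equivalent to $\De(h)=0$. Substituting $\De(h)=0$ in \eqref{eqnabla} gives $h_\#(\D_\al\be)=\na_{h_\#(\al)}h_\#(\be)$ for all $\al,\be\in\Om^1(M)$, so $\na_XY\in\Ga(D)$ whenever $X,Y$ lie in the image of $h_\#$; by $C^\infty$-linearity in $X$ and the Leibniz rule in $Y$, this extends to all $X,Y\in\Ga(D)$. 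The hypothesis of Proposition \ref{local2} is thus satisfied, and it produces a chart $(x_1,\ldots,x_r,y_1,\ldots,y_{n-r})$ near $x$ with
\[ D=\spa\{\partial_{x_1},\ldots,\partial_{x_r}\}\quad\text{and}\quad \na_{\partial_{x_i}}\partial_{x_j}=0,\quad 1\le i,j\le r. \]

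\emph{Step 2: normal form of $h$ and induced structures on leaves.} Because $h_\#$ takes values in $D$ and $h$ is symmetric, in this chart
\[ h=\sum_{i,j=1}^r h_{ij}(x,y)\,\partial_{x_i}\otimes\partial_{x_j}, \]
and constancy of the rank of $h_\#$ forces the $r\times r$ matrix $(h_{ij}(x,y))$ to be invertible on the chart; denote by $(g_{ij}(x,y))$ its inverse. Each slice $L_{y_0}:=\{y=y_0\}$ is a leaf of the foliation integrating $D$, so by Theorem \ref{poissonbis}$(ii)$ it inherits a pseudo-Hessian structure whose metric is $g_L=\sum_{i,j=1}^r g_{ij}(x,y_0)\,dx_i\otimes dx_j$ and whose torsionless flat connection is the restriction of $\na$; in the coordinates $x_1,\ldots,x_r$ the latter has vanishing Christoffel symbols.

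\emph{Step 3: Codazzi plus Poincar\'e.} In these coordinates the Codazzi equation \eqref{codazzi} for $(L_{y_0},\na^L,g_L)$ collapses to
\[ \frac{\partial g_{ij}}{\partial x_k}=\frac{\partial g_{kj}}{\partial x_i},\qquad 1\le i,j,k\le r, \]
identically in the parameters $y$. Treating $y$ as parameters, the Poincar\'e lemma in the $x$-directions (after shrinking the chart to a product of balls) yields local functions $F_j(x,y)$ with $g_{ij}=\partial F_j/\partial x_i$; the symmetry $g_{ij}=g_{ji}$ then gives $\partial F_j/\partial x_i=\partial F_i/\partial x_j$, and a second application of Poincar\'e produces a single local function $f(x,y)$ with $g_{ij}=\partial^2 f/\partial x_i\partial x_j$, which is exactly the claim.

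The main obstacle is Step 1, which rests on two non-tensorial facts: that $D$ is $\na$-stable (deduced from $\De(h)=0$ via \eqref{eqnabla}) and that a local $\na$-flat frame of $D$ exists (via Lemma \ref{sa�d}, as packaged in Proposition \ref{local2}). Once that chart is produced, Steps 2 and 3 are an essentially mechanical combination of the Codazzi equation on each leaf with the Poincar\'e lemma with parameters.
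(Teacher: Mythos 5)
Your proof is correct, and it shares its first half with the paper's: both arguments check that $\mathrm{Im}\,h_\#$ is $\na$-stable (via \eqref{eqnabla} and $\De(h)=0$) and then apply Proposition \ref{local2} to produce the flat adapted chart, and both finish with a Poincar\'e lemma along the leaves. The two arguments diverge in how the closedness of the relevant leafwise $1$-form is obtained. The paper stays on the side of $h$ itself: it computes that $\D_{dx_i}dx_j$ equals $d\bigl(h(dx_i,dx_j)\bigr)$ modulo $\ker h_\#$, deduces that the vector fields $h_\#(dx_1),\ldots,h_\#(dx_r)$ commute, straightens them into auxiliary coordinates $(z_1,\ldots,z_n)$ with $h_\#(dx_i)=\partial_{z_i}$, reads off that the inverse matrix is $h^{ij}=\partial z_j/\partial x_i$, and then applies the foliated Poincar\'e lemma once to $\sigma=\sum_j z_j\,dx_j$. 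You instead pass immediately to the inverse metric $g_{ij}$ on each slice, invoke Theorem \ref{poissonbis}$(ii)$ to obtain the Codazzi symmetry $\partial_{x_k}g_{ij}=\partial_{x_i}g_{kj}$ in the flat coordinates, and integrate twice with the parametric Poincar\'e lemma. Both routes consume the hypothesis $\De(h)=0$ at exactly one point (commutativity of the $h_\#(dx_i)$ in the paper, Codazzi on the leaves for you) and both are complete; the paper's version has the minor advantage of exhibiting an explicit primitive (the functions $z_j$, which end up being $\partial f/\partial x_j$ and serve as dual flat coordinates on each leaf), while yours avoids constructing the auxiliary coordinate system at the cost of a second integration.
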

\begin{proof} By applying Proposition \ref{local2} to  $\mathrm{Im}h_\#$ near $x$,  there exists a chart $(x_1,\ldots,x_r,y_1,\ldots,y_{n-r})$ such that
	\[ \mathrm{span}\left(\partial_{x_1},\ldots,\partial_{x_r} \right)=\mathrm{Im}h_\#\esp \na_{\partial_{x_i}}\partial_{x_j}=0,\; i,j=1,\ldots,r. \] This implies that $h_\#(dy_i)=0$ for $i=1,\ldots,n-r$. Note first that, for any $i,j,k=1,\ldots,n$, $\prec \na_{\partial_{x_k}}^*dx_j,\partial_{x_j}\succ=0$ and hence
	from \eqref{connectionh}, 
	\begin{eqnarray*}
	\prec \D_{dx_i}dx_j,\partial_{x_k}\succ&=&\na_{\partial_{x_k}}h(dx_i,dx_j)+\prec \na_{h_\#(dx_i)}dx_j,\partial_{x_k}\succ\\
	&=&\prec dh(dx_i,dx_j),\partial_{x_k}\succ-\prec \na_{\partial_{x_k}}^*dx_i,h_\#(dx_j)\succ-\prec \na_{\partial_{x_k}}^*dx_j,h_\#(dx_k)\succ\\
	&=&\prec dh(dx_i,dx_j),\partial_{x_k}\succ.
	\end{eqnarray*}Thus $\D_{dx_i}dx_j=dh(dx_i,dx_j)+\al_{ij}$ where $\al_{ij}\in\ker h_\#$.
	This implies that $[h_\#(dx_i),h_\#(dx_j)]=h_\#(\D_{dx_i}dx_j)-h_\#(\D_{dx_j}dx_i)=0$.
	 So there exists a coordinates system $(z_1,\ldots,z_n)$ such that
	\[  h_\#(dx_i)=\partial_{z_i},\; i=1,\ldots,r.\] We deduce that
	\[ \partial_{x_i}=\sum_{j=1}^rh^{ij}\partial_{z_j},\; i=1,\ldots,r,\; (h^{ij})=(dh(dx_i,dx_j))^{-1}. \]
	Thus
	$ h^{ij}=\frac{\partial z_j}{\partial x_i}.$
	We consider $\sigma=\sum_{j=1}^rz_jdx_j$. We have $d_{\mathcal{F}}\sigma=0$ so,
	according to  the foliated Poincar\'e Lemma (see \cite{calvin} pp. 56), there exists a function $f$ such that
	$h^{ij}=\frac{\partial^2f}{\partial x_i\partial x_j}$, which completes the proof.
\end{proof}

\begin{remark} It is important to generalize the theorem above near a singular point. The authors have no ideas how to do it and left the problem open.
	
\end{remark}

\section{Linear generalized pseudo-Hessian structures}\label{section7}
In this section, we will pursue the study of similarities between pseudo-Hessian manifold and Poisson manifolds. Namely, we will show that the dual of any commutative and associative algebra carries a canonical pseudo-Hessian structure and we will study these structures in detail.

Let $(M,\na,h)$ be a generalized pseudo-Hessian manifold. Let $x\in M$ and denote by $\G_x=\ker h_{\#}(x)$.
Let $\D$ be the right-anchored product associated to $h$  given by \eqref{connectionh}. According to \eqref{eqnabla}, for any $\al,\be\in\Om^1(M)$, $h_\#(\D_\al\be)=\na_{h_\#(\al)}h_\#(\be)$. This shows that if $h_\#(\al)(x)=0$ then $h_\#(\D_\al\be)(x)=0$. Moreover, $\D_\al\be-\D_{\be}\al=\na_{h_\#(\al)}\be-\na_{h_\#(\be)}\al$. This implies that if $h_\#(\al)(x)=h_\#(\be)(x)=0$ then $\D_\al\be(x)=\D_\be\al(x)$.
For any $a,b\in\G_x$ put
\[ a.b=(\D_\al\be)(x), \]where  $\al,\be$ are 2 differential 1-forms satisfying $\al(x)=a$ and $\be(x)=b$. According to the what above this defines a commutative product on $\G_x$ and moreover, by using the vanishing of the curvature of $\D$, we get:
\begin{pr} $(\G_x,.)$ is a commutative associative algebra.

\end{pr}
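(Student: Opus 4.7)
The plan is to derive associativity from the vanishing of the curvature of $\D$ (Theorem~\ref{new}) combined with the commutativity already established in the paragraph preceding the statement.

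Fix $a,b,c\in\G_x$ and choose $\al,\be,\ga\in\Om^1(M)$ extending them. I first need to know that $\D_\al\be$ is itself an admissible extension of $a.b$, i.e., that $(\D_\al\be)(x)\in\G_x$. This follows from \eqref{eqnabla}: $h_\#(\D_\al\be)=\na_{h_\#(\al)}h_\#(\be)$, and since $h_\#(\al)(x)=0$ while $\na_XY$ is $C^\infty(M)$-linear in $X$, this vanishes at $x$. Consequently the products $(a.b).c=(\D_{\D_\al\be}\ga)(x)$ and $a.(b.c)=(\D_\al\D_\be\ga)(x)$ are both meaningful.

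By Theorem~\ref{new}, $(T^*M,M,h_\#,\D)$ is a left symmetric algebroid, so its curvature vanishes. Noting that $[\al,\be]_\D=\D_\al\be-\D_\be\al$ (which follows from \eqref{connectionh} by symmetry of $h$), the identity $R^\D(\al,\be)\ga=0$ reads
\[
\D_\al\D_\be\ga-\D_\be\D_\al\ga \;=\; \D_{\D_\al\be}\ga-\D_{\D_\be\al}\ga.
\]
Evaluating at $x$ and using commutativity, which gives $(\D_\al\be)(x)=(\D_\be\al)(x)=a.b$, the right-hand side vanishes while the left-hand side equals $a.(b.c)-b.(a.c)$. Hence
\[
a.(b.c)=b.(a.c)\qquad\text{for all }a,b,c\in\G_x,
\]
which is the standard left-symmetric identity in its commutative form.

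To finish, I combine this identity with commutativity. Commutativity gives $(a.b).c=c.(a.b)$; applying the identity $x.(y.z)=y.(x.z)$ with $(x,y,z)=(c,a,b)$ yields $c.(a.b)=a.(c.b)$, and a final use of commutativity turns $a.(c.b)$ into $a.(b.c)$. Altogether $(a.b).c=a.(b.c)$, proving associativity. The main obstacle is not algebraic but notational: one must secure that $\D_\al\be$ really descends to a product on $\G_x$ (so that iterated products make sense), after which the curvature identity collapses to exactly the left-symmetric law, which in the commutative setting is equivalent to associativity.
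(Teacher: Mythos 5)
Your proof is correct and follows exactly the route the paper indicates: the paper merely asserts that the result follows from the vanishing of the curvature of $\D$, and you supply the details — well-definedness via \eqref{eqnabla}, the evaluation of $R^\D(\al,\be)\ga=0$ at $x$ to get $a.(b.c)=b.(a.c)$, and the standard combination with commutativity to obtain associativity. No gaps.
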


As the dual of a Lie algebra carries a natural Poisson structure, the dual of a commutative associative algebra carries a generalized pseudo-Hessian structure.  Indeed, let $(\mathcal{A},.)$ be a finite dimensional commutative associative algebra. We define a symmetric bivector $h$ on $\mathcal{A}^*$ by putting
\begin{equation}\label{h} h(\al,\be)(\mu)=\prec\mu,\al(\mu).\be(\mu)\succ,\quad \al,\be\in\Om^1(\mathcal{A}^*)=C^\infty(\mathcal{A}^*,\mathcal{A}),\mu\in \mathcal{A}^*. \end{equation}We denote by $\na^0$ the canonical affine connection  of $\mathcal{A}^*$ given by $\na^0_{X}Y(\mu)=d_\mu Y(X(\mu))$ where $X,Y:\mathcal{A}^*\too \mathcal{A}^*$ are regarded as vector fields on $\mathcal{A}^*$. For any $u\in \mathcal{A}$, we denote by $u^*$ the linear function on $\mathcal{A}^*$ given by $u^*(\mu)=\prec\mu,u\succ$,  by $X_u$ the vector field on $\mathcal{A}^*$ given by $X_u=h_\#(du^*)$ and by $L_u:\mathcal{A}\too\mathcal{A}$ the left multiplication by $u$. Let $\D$ be the right-anchored product associated to $(\mathcal{A}^*,\na^0,h)$ and given by \eqref{connectionh}. Finally, denote by $T$ the tensor field on $\mathcal{A}^*$ given by $T(\al,\be,\ga)=\na^0_{h_\#(\al)}h(\be,\ga)$. A straightforward computation gives the following proposition.
\begin{pr}\label{formulas} For any $u,v,w,x\in \mathcal{A}$, we have
	\begin{eqnarray*} h(du^*,dv^*)&=&(u.v)^*,\;X_u=L_u^*,\; \na^0_{X_u}X_v=X_{u.v},\;T(du^*,dv^*,dw^*)=(u.v.w)^*,\;\\
	\D_{du^*}dv^*&=&d(u.v)^*\esp \D T(du^*,dv^*,dw^*,dx^*)=-2(u.v.w.x)^*. \end{eqnarray*}
	
\end{pr}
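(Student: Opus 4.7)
The plan is to check the six identities in sequence, each one being a short calculation once we exploit the canonical identifications $T_\mu\mathcal{A}^*\cong\mathcal{A}^*$ and $T_\mu^*\mathcal{A}^*\cong\mathcal{A}$. The key enabling observation is that, for any $u\in\mathcal{A}$, the linear function $u^*$ on $\mathcal{A}^*$ has a \emph{constant} differential: regarded via the above identification as a map $\mathcal{A}^*\to\mathcal{A}$, one has $du^*\equiv u$. Consequently the canonical flat connection annihilates it, $\na^{0*}_X du^*=0$ for every vector field $X$ on $\mathcal{A}^*$. This vanishing will be used repeatedly and is the main mechanism that makes the correction terms in $\na^0 h$ and in $\D T$ disappear.

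I would then treat the identities in order. The first, $h(du^*,dv^*)=(u.v)^*$, follows by substitution into the definition \eqref{h}. For $X_u=L_u^*$, I test $h(du^*,\be)(\mu)=\prec\mu,u.\be(\mu)\succ=\prec L_u^*\mu,\be(\mu)\succ$ against an arbitrary $\be\in\Om^1(\mathcal{A}^*)$ to read off $X_u(\mu)=L_u^*\mu$. For $\na^0_{X_u}X_v=X_{u.v}$, note that $X_v=L_v^*$ is a linear endomorphism of $\mathcal{A}^*$, so its differential at any point equals itself; associativity then gives $L_v^*\circ L_u^*=(L_uL_v)^*=L_{u.v}^*$.

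For the tensorial identity $T(du^*,dv^*,dw^*)=(u.v.w)^*$, I would read $T$ as the $(0,3)$-tensor $(\na^0 h)$, evaluated on $(X_u,dv^*,dw^*)$. Expanding as a covariant derivative of a tensor, the two correction terms vanish by $\na^{0*}_{X_u}dv^*=\na^{0*}_{X_u}dw^*=0$, leaving $X_u.h(dv^*,dw^*)=X_u.(v.w)^*=(u.v.w)^*$, where the last equality uses $X_u=L_u^*$. For $\D_{du^*}dv^*=d(u.v)^*$, I would substitute into \eqref{connectionh}: the second term vanishes for the same reason, and the first simplifies to $\prec X,d(u.v)^*\succ$, giving the claim.

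Finally, for $\D T(du^*,dv^*,dw^*,dx^*)=-2(u.v.w.x)^*$, interpret the left-hand side as $(\D_{du^*}T)(dv^*,dw^*,dx^*)$ and expand by the derivation property of $\D$ on tensors. The first term is $h_\#(du^*).T(dv^*,dw^*,dx^*)=X_u.(v.w.x)^*=(u.v.w.x)^*$. Each of the three correction terms $-T(\D_{du^*}dv^*,dw^*,dx^*)$, $-T(dv^*,\D_{du^*}dw^*,dx^*)$, $-T(dv^*,dw^*,\D_{du^*}dx^*)$ equals $-(u.v.w.x)^*$ by combining the identity $\D_{du^*}dv^*=d(u.v)^*$ with the formula for $T$ on exact linear differentials and the commutativity/associativity of $\mathcal{A}$. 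Summing yields $(1-3)(u.v.w.x)^*=-2(u.v.w.x)^*$.

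There is essentially no obstacle: the only subtlety is consistently distinguishing the function $h(\al,\be)$ from the tensorial covariant derivative $(\na^0 h)$, and analogously reading $\D T$ as the $\D$-covariant derivative of $T$ as a $(0,3)$-tensor. Once these conventions are fixed, the fact that $\na^{0*}_X du^*=0$ kills all Leibniz corrections, and every identity reduces to the associative/commutative law in $\mathcal{A}$.
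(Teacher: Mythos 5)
Your proof is correct and is precisely the ``straightforward computation'' the paper alludes to without writing out: the paper gives no explicit proof of Proposition \ref{formulas}, and your argument supplies it faithfully. The key observations --- that $du^*$ is the constant map $\mu\mapsto u$, hence $\na^{0*}_Xdu^*=0$ kills all Leibniz correction terms, and that everything then reduces to $L_uL_v=L_{u.v}$ via commutativity and associativity --- are exactly what is needed, and the count $(1-3)(u.v.w.x)^*=-2(u.v.w.x)^*$ for $\D T$ is right.
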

As a consequence of these formulas and Theorem \ref{poissonbis}, we get the following result.
\begin{theo}\label{main} $(\mathcal{A}^*,\na^0,h)$ is a generalized pseudo-Hessian manifold and the singular foliation associated to $\mathrm{Im}h_\#$ is given by the orbits of the linear action $\Phi$ of the abelian Lie group $(\mathcal{A},+)$ on $\mathcal{A}^*$ given by $\Phi(u,\mu)=\exp(L_u^*)(\mu)$. Moreover,   $(\mathcal{A}^*,\na^0,h)$ is a generalized affine special real manifold if and only if $\mathcal{A}^4=0$. In particular, the orbits of $\Phi$ are pseudo-Hessian manifolds and if $\mathcal{A}^4=0$ they are affine special real manifolds.
	
\end{theo}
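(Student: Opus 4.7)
The plan is to verify the three claims in turn, leveraging the explicit identities recorded in Proposition \ref{formulas} together with the general machinery of Theorems \ref{new} and \ref{poissonbis}.

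First, to see that $(\mathcal{A}^*,\nabla^0,h)$ is generalized pseudo-Hessian, I must check the Codazzi-type identity \eqref{codazi}. Both sides of this equation are $C^\infty(\mathcal{A}^*)$-linear in $\alpha,\beta,\gamma$, and at every $\mu\in\mathcal{A}^*$ the 1-forms $du^*$, $u\in\mathcal{A}$, exhaust $T_\mu^*\mathcal{A}^*$, so it suffices to verify the identity on such 1-forms. Since the functions $u^*$ are linear, each $du^*$ is a constant 1-form and hence parallel: $\nabla^0_X du^*=0$ for every $X$. Combining this with $h(dv^*,dw^*)=(v.w)^*$ and $X_u=L_u^*$ from Proposition \ref{formulas}, one obtains directly
\[\nabla^0_{X_u}h(dv^*,dw^*)=X_u\bigl((v.w)^*\bigr)=\langle L_u^*(\mu),v.w\rangle=(u.v.w)^*.\]
The Codazzi difference is therefore $(u.v.w)^*-(v.u.w)^*$, which vanishes by commutativity of $\mathcal{A}$.

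Second, I identify $\mathrm{Im}\,h_\#$ with the infinitesimal generators of $\Phi$ and then match leaves to orbits. At each $\mu$, the identity $h_\#(du^*)=L_u^*$ together with the spanning property of the $du^*$'s shows $\mathrm{Im}\,h_\#(\mu)=\{L_u^*(\mu):u\in\mathcal{A}\}$; on the other hand $\frac{d}{dt}\big|_{t=0}\Phi(tu,\mu)=L_u^*(\mu)$, so the tangent space at $\mu$ to the $\Phi$-orbit of $\mu$ is exactly $\mathrm{Im}\,h_\#(\mu)$. A short computation using associativity gives $L_u^*L_v^*=(L_vL_u)^*=(L_{v.u})^*$, so commutativity of $\mathcal{A}$ implies $[L_u^*,L_v^*]=0$ as vector fields on $\mathcal{A}^*$. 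Hence the family $\{\exp(L_u^*)\}_{u\in\mathcal{A}}$ forms a commuting family of flows integrating the distribution, and the connected orbits of $\Phi$ coincide with the leaves of the singular foliation produced by Theorem \ref{poissonbis}. The pseudo-Hessian structure on each leaf is then the one supplied by part $(ii)$ of that theorem.

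Third, the affine special real characterization follows directly from the last formula of Proposition \ref{formulas}: $\D T(du^*,dv^*,dw^*,dx^*)=-2(u.v.w.x)^*$. Since $\D T$ is a tensor and the $du^*$ span the cotangent bundle at every point, $\D T\equiv 0$ is equivalent to $(u.v.w.x)^*=0$ for all $u,v,w,x\in\mathcal{A}$, which is equivalent to $u.v.w.x=0$ in $\mathcal{A}$, i.e.\ $\mathcal{A}^4=0$. The statement that the orbits of $\Phi$ are affine special real in that case is then immediate from Theorem \ref{poissonbis}.

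The only delicate point is step two: one must be careful to match the Sussmann-type leaves of $\mathrm{Im}\,h_\#$ with the orbits of the abelian group action $\Phi$, rather than with, for instance, some strictly smaller connected piece. This rests on the pointwise identification of the distribution combined with the commutativity $[L_u^*,L_v^*]=0$, which guarantees that the $\Phi$-orbits are precisely the maximal connected integral submanifolds with the correct smooth structure.
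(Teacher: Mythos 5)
Your proposal is correct and follows essentially the same route as the paper: the paper derives Theorem \ref{main} directly from the identities in Proposition \ref{formulas} (which give the Codazzi identity on the spanning parallel forms $du^*$, the identification $h_\#(du^*)=L_u^*$ of the distribution with the infinitesimal generators of $\Phi$, and $\D T(du^*,dv^*,dw^*,dx^*)=-2(u.v.w.x)^*$) together with Theorem \ref{poissonbis}. You have merely written out the tensoriality and leaf-versus-orbit details that the paper leaves implicit.
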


\begin{remark} This theorem is similar to the well-known result asserting that the dual of a Lie algebra carries a natural Poisson structure. However, there is an important difference between the two situations. In the case of a Lie algebra, the symplectic leaves are the orbits of the co-adjoint action of any connected Lie group associated to the Lie algebra and the action preserves the symplectic form of any leaf. In the case of a commutative associative algebra, the pseudo-Hessian leaves are the orbits of the action of $(\mathcal{A},+)$ and this action preserves the affine structure of any leaf but not its pseudo-Hessian metric unless $\mathcal{A}^3=0$. Not that these pseudo-Hessian manifolds are diffeomorphic to a $\R^q\times \mathbb{T}^p$.
	
\end{remark}

The class of associative commutative algebras constitutes a large class of non associative algebras so Theorem \ref{main} is a powerful tool to build examples of pseudo-Hessian manifolds and affine special real manifolds. Since any pseudo-Hessian structure on a manifold gives rise to a pseudo-K\"ahlerian structure on its tangent bundle we get also a machinery to build examples of
pseudo-K\"ahlerian manifolds.
In what follows, we will illustrate this by showing that, using Theorem \ref{main}, we can get interesting examples. Namely, we will show that the Hessian curvature of these manifolds is not trivial in general. Shima introduced the notion of Hessian curvature, which is a finer invariant than Riemannian curvature and is related with the curvature of the associated K\"ahler metric on  the total space of the tangent bundle. Let us  recall first the definition of the Hessian curvature and  the definitions of some basic notions in a pseudo-Hessian manifold (see \cite{shima} for more details). 

Let $(M,\na,g)$ be a pseudo-Hessian manifold. 
Denote by $D$ the Levi-Civita connection of $g$ and put $\na'=2D-\na$ and $\ga=D-\na$. The connection $\na'$ is called the dual connection of $\na$ with respect to $g$ and $(M,\na',g)$ is also a pseudo-Hessian structure. 
The Hessian curvature $(M,\na,g)$ is the tensor $Q$ given by
$ Q=\na\ga$.
The first and the second Koszul forms are given, respectively, by
$\al(X)=\tr(i_X\ga)$ and $\be=\na\al.$ 

Let compute now all the mathematical objects above in the case where $M=\{\exp(L_a^*)(\mu),a\in \mathcal{A}  \}$ is an orbit of the pseudo-Hessian foliation associated to the pseudo-Hessian manifold $(\mathcal{A}^*,\na^0,h)$ appearing in Theorem \ref{main}. Note that $T_\nu M=\{ X_a(\nu),a\in \mathcal{A}  \}$. As above, we denote by $\na$ the affine connection on $M$, $g$ the pseudo-Riemannian metric, $D$ the Levi-Civita connection and so on.
The following proposition is a consequence of an easy and straightforward computation.
\begin{pr}\label{prc}
For any $a,b,c\in \mathcal{A}$ and any $\nu\in \mathcal{A}^*$,
\begin{eqnarray*} g(X_a(\nu),X_b(\nu))&=&\prec \nu,a.b\succ,\; \na_{X_a}X_b=X_{a.b}, D_{X_a}X_b=\frac12X_{a.b}, \; \na'_{X_a}X_b=0,\; \\\;Q(X_a,X_b)X_c&=&\frac12X_{a.b.c},\;\al(X_a)=-\frac12 \tr( L_a)\esp\be(X_a,X_b)=\frac12\tr(L_{a.b}). \end{eqnarray*}
 In particular, $g$ is a flat pseudo-Riemannian metric and $Q=0$ if and only if  $\mathcal{A}^4=0$.

\end{pr}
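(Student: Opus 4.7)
The plan is to verify each identity by direct computation, relying on two structural facts: (a) the vector field $X_a=L_a^*$ is linear on $\mathcal{A}^*$, so the Lie derivative of a linear function satisfies $X_a.u^*=(a.u)^*$; and (b) by Theorem \ref{poissonbis} together with the identity $h_\#(\D_\al\be)=\na^0_{h_\#(\al)}h_\#(\be)$ (which is \eqref{eqnabla} with $\De(h)=0$), the pseudo-Hessian data induced on any orbit $M$ are completely encoded by the dictionary of Proposition \ref{formulas}. Once $g$, $\na$ and $[X_a,X_b]$ are pinned down on $M$, the Levi-Civita connection $D$ falls out of Koszul's formula, and $\na',\ga,Q,\al,\be$ follow from their definitions.

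For the first two identities: $g(h_\#(\al),h_\#(\be))=h(\al,\be)$ combined with $h(da^*,db^*)=(a.b)^*$ yields $g(X_a,X_b)=(a.b)^*$; the affine connection on $M$ is $\na_{h_\#(\al)}h_\#(\be)=\na^0_{h_\#(\al)}h_\#(\be)$, and choosing $\al=da^*$, $\be=db^*$ together with $\D_{da^*}db^*=d(a.b)^*$ and $h_\#(d(a.b)^*)=X_{a.b}$ gives $\na_{X_a}X_b=X_{a.b}$. Commutativity of $\mathcal{A}$ then forces $[X_a,X_b]=X_{a.b}-X_{b.a}=0$.

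Now apply Koszul. Using (a), $X_a.g(X_b,X_c)=X_a.(b.c)^*=(a.b.c)^*$, and with all brackets vanishing,
\[ 2g(D_{X_a}X_b,X_c)=(a.b.c)^*+(b.a.c)^*-(c.a.b)^*=(a.b.c)^*=g(X_{a.b},X_c), \]
where commutativity and associativity reduce the three summands to the single term $(a.b.c)^*$. Nondegeneracy of $g$ on $M$ yields $D_{X_a}X_b=\frac12 X_{a.b}$, hence $\na'_{X_a}X_b=0$ and $\ga(X_a,X_b)=-\frac12 X_{a.b}$. The Hessian curvature unfolds as
\[ Q(X_a,X_b)X_c=\na_{X_a}\bigl(\ga(X_b,X_c)\bigr)-\ga(\na_{X_a}X_b,X_c)-\ga(X_b,\na_{X_a}X_c)=-\tfrac12 X_{a.b.c}+\tfrac12 X_{a.b.c}+\tfrac12 X_{a.b.c}=\tfrac12 X_{a.b.c}. \]
The endomorphism $v\mapsto\ga(X_a,v)$ of $T_\nu M$ sends $X_b(\nu)$ to $-\frac12 X_{a.b}(\nu)$, so under the identification $T_\nu M\cong\mathcal{A}/K_\nu$ with $K_\nu=\{b:\nu(\mathcal{A}.b)=0\}$ (stable under $L_a$) it equals $-\frac12$ times the endomorphism induced by $L_a$, giving $\al(X_a)=-\frac12\tr(L_a)$. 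Finally $\be=\na\al$ together with $\na_{X_a}X_b=X_{a.b}$ yields $\be(X_a,X_b)=X_a.\al(X_b)-\al(X_{a.b})=\frac12\tr(L_{a.b})$.

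Flatness of $g$ comes from $R^D(X_a,X_b)X_c=\frac14(X_{a.b.c}-X_{b.a.c})=0$ by commutativity and associativity. For the last claim, $Q\equiv 0$ on every orbit is equivalent to $X_{a.b.c}(\nu)=L_{a.b.c}^*(\nu)=0$ for all $a,b,c\in\mathcal{A}$ and all $\nu\in\mathcal{A}^*$, which amounts to $\nu(a.b.c.d)=0$ for all $\nu,d$, i.e.\ $\mathcal{A}^4=0$. The one subtle point, and the main obstacle, is the constancy of $\al(X_a)$ along each orbit, needed to derive the formula for $\be$: since $K_{\exp(L_c^*)\nu}=\exp(-L_c)(K_\nu)$ and $\exp(-L_c)$ commutes with $L_a$ by commutativity of $\mathcal{A}$, the restrictions of $L_a$ to $K_\nu$ and $K_{\nu'}$ are conjugate, so the quotient trace is constant along $M$. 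All other steps are bookkeeping.
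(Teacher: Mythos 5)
Your computation follows exactly the route the paper has in mind (the paper offers no written proof beyond calling this ``an easy and straightforward computation''), and the identities for $g$, $\na$, $D$, $\na'$, $Q$, the flatness of $g$, and the equivalence $Q=0\Leftrightarrow\mathcal{A}^4=0$ are all verified correctly: the vanishing of the brackets $[X_a,X_b]$ plus commutativity and associativity collapse the Koszul formula to the single term $(a.b.c)^*$, and the rest is bookkeeping as you say.

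The one step that does not follow from your own argument is the final equality in the computation of the first Koszul form. You correctly identify $\ga_{X_a}=\ga(X_a,\cdot)$ with $-\frac12$ times the endomorphism \emph{induced by} $L_a$ on $T_\nu M\cong\mathcal{A}/K_\nu$, but the trace of that induced endomorphism is $\tr(L_a)-\tr\bigl(L_a|_{K_\nu}\bigr)$, not $\tr(L_a)$; the leap to $\al(X_a)=-\frac12\tr(L_a)$ silently assumes $\tr\bigl(L_a|_{K_\nu}\bigr)=0$. This holds on open orbits ($K_\nu=0$), but fails in general: for $\mathcal{A}=\R e_1\oplus\R e_2$ with $e_i^2=e_i$, $e_1e_2=0$, and $\nu=(t,0)$, one has $K_\nu=\R e_2$, $X_{e_2}(\nu)=0$, hence $\al(X_{e_2})=0$ while $-\frac12\tr(L_{e_2})=-\frac12$. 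What your argument actually proves is $\al(X_a)=-\frac12\tr\bigl(L_a|_{\mathcal{A}/K_\nu}\bigr)$ (and correspondingly $\be(X_a,X_b)=\frac12\tr\bigl(L_{a.b}|_{\mathcal{A}/K_\nu}\bigr)$), which is the correct reading of the statement; to be admissible you should either restrict to orbits on which $h$ is nondegenerate as a bivector on $\mathcal{A}^*$, or state explicitly that $\tr(L_a)$ denotes the trace of the induced quotient map. Your conjugacy argument for the constancy of this quotient trace along the orbit, needed for the formula for $\be$, is correct and is a point genuinely worth making explicit.
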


We end this paper by considering examples of commutative associative algebras.  For each of them we choose an orbit $M$ and give in  an affine  system of coordinates $(x_i)$  the pseudo-Hessian metric $g$ and a function $\phi$ such that $g_{ij}=\frac{\partial^2 \phi}{\partial x_i\partial x_j}$. Some examples comes from the lists of low dimensional associative algebras obtained in \cite{rhakimov}.

\begin{exem}\begin{enumerate} All the algebras bellow are identified with $\R^n$ with its canonical basis $(e_i)_{i=1}^n$ and $(e_i^*)_{i=1}^n$ is the dual basis. The action $\Phi$ of $\mathcal{A}$ on $\mathcal{A}^*$ is given by
		$\Phi(a,\mu)=\exp(L_{a}^*)(\mu)$ and, for any $a\in \mathcal{A}$, $X_a$ is the vector fields on $\mathcal{A}^*$ given by $X_a=L_a^*$, where $L_a$ is the left multiplication by $a$. We denote by $\na$ the canonical connection on $\mathcal{A}^*$.

		\item We take $\mathcal{A}=\R^n$ as a product of $n$ copies of the associative commutative algebra $\R$. The product is given by $e_ie_i=e_i$ for $i=1,\ldots,n$. We denote by $(a_i)_{i=1}^n$ the linear coordinates of $\mathcal{A}$ and $(x_i)_{i=1}^n$ the dual coordinates on $\mathcal{A}^*$. We have
		\[ \Phi\left(\sum_{i=1}^na_ie_i,\sum_{i=1}^nx_ie_i^*\right)=\sum_{i=1}^n e^{a_i}x_ie_i^*. \]Moreover, for any $i=1,\ldots,n$, $X_{e_i}=x_i\partial_{x_i}$. The orbit of a point $x\in \mathcal{A}^*$ is $M_x=\left\{ \sum_{i=1}^n e^{a_i}x_ie_i^*,a_i\in\R \right\}$. It is a convex cone    and one can see easily that if  $\phi:\mathcal{A}^*\too \R$ is the function given by
		\[ \phi(u)=\sum_{i=1}^nu_i\ln|u_i|, \]then the restriction of $\na d\phi$ to $M_x$ together with the restriction of $\na$ to $M_x$ define the pseudo-Hessian structure on $M_x$ described in Theorem \ref{main}. Note here that the signature of the pseudo-Hessian metric on $M_x$ is exactly $(p,q)$ where $p$ is the number of $x_i$ such that $x_i>0$ and $q$ is the number of $x_i$ such that $x_i<0$. Note that if $x_i>0$ for $i=1,\ldots,n$ then the metric on $M_x$ is definite positive and we recover the example given in \cite{shima} pp. 17.
		
		\item We take $\mathcal{A}=\C$ endowed with its canonical structure of commutative and associative algebra.  We have
	\[ e_1.e_1=e_1,\; e_1.e_2=e_2.e_1=e_2,\; e_2.e_2=-e_1. \]We denote here by $(x,y)$ the linear coordinates on $\mathcal{A}$ associated to $(e_1,e_2)$ and $(\al,\be)$ the dual coordinates on $\mathcal{A}^*$. We have
	\[ X_{e_1}=\al\partial_{\al}+\be\partial_{\be}\esp X_{e_2}= \be\partial_{\al}-\al\partial_{\be} \] and it is easy to check that
	\[ \Phi(xe_1+ye_2,\al e_1^*+\be e_2^*)=e^x\left( (\al\cos(y)+\be\sin(y))e_1^*+(-\al\sin(y)+\be\cos(y))e_2^*\right). \]
	We deduce that we have two orbits the origin and $\mathcal{A}^*\setminus\{0\}$. Let describe the pseudo-Hessian structure of $M:=\mathcal{A}^*\setminus\{0\}$. The pseudo-Hessian metric $g$ satisfies
		\[ g(X_{e_1},X_{e_1})=\al,\;g(X_{e_1},X_{e_2})=\be,\; g(X_{e_2},X_{e_2})=-\al \]and hence 
	\[ g=\frac{1}{\al^2+\be^2}(\al d\al^2+2\be d\al d\be-\al d\be^2). \]
	Thus $(M,\na ,g)$ is a Lorentzian Hessian manifold. Moreover, the metric $g$ is flat.
	Now we look for a function $f$ on $M$ such that $g=\na df$, i.e.,
	\[ \frac{\partial^2 f}{\partial\al^2}=\frac{\al}{\al^2+\be^2},\;\frac{\partial^2 f}{\partial\be^2}=\frac{-\al}{\al^2+\be^2}\esp \frac{\partial^2 f}{\partial\al\partial\be}=\frac{\be}{\al^2+\be^2}  \] The function $f$ given by
	\[ f(\al,\be)=\frac12\al\ln(\al^2+\be^2)+\be\arctan\left(\frac{\al}{\be} \right) \]satisfies these equations on the open set $\{ \be\not=0\}$. Note that this function is harmonic.

	\item We take $\mathcal{A}=\R^3$ with the commutative associative product given by $e_1e_1=e_2$ and $e_1e_2=e_3$. We have $\mathcal{A}^3\not=0$ and $\mathcal{A}^4=0$. We denote by $(a,b,c)$ the linear coordinates of $\mathcal{A}$ and $(x,y,z)$ the dual coordinates of $\mathcal{A}^*$. We have
	\[ X_{e_1}=y\partial_{x}+z\partial_y,\; X_{e_2}=z\partial_x\esp X_{e_3}=0 \]and
	\[ \Phi(ae_1+be_2+ce_3,xe_1^*+ye_2^*+ze_3^*)=(x+ay+(\frac12a^2+b)z,y+az,z). \]
	The orbits of this action are the plans $\{z=c, c\not=0\}$, the lines $\{ z=0,y=c,c\not=0\}$ and the points $\{ (c,0,0)\}$. The pseudo-Riemannian metric on $M_c=\{z=c, c\not=0\}$ is given by $$g_c(X_{e_1},X_{e_1})=y,\;g_c(X_{e_1},X_{e_2})=c\esp g_c(X_{e_2},X_{e_2})=0.$$This is a Lorentzian metric and one can check easily that, if $\phi(x,y,z)=-\frac{y^3}{6z^2}+\frac{xy}{z}$ then $g_c$ is the restriction of $\na d\phi$ to $M_c$. Note that since $\mathcal{A}^4=0$ then $(M_c,\na_{|M_c},g_c)$ is an affine special real manifold.
	However, the pseudo-Hessian metric on the line $L_c=\{ z=0,y=c,c\not=0\}$ is given by the restriction of $\na d\phi_1$ where $\phi_1(x,y,z)=\frac{x^2}{2y}$.
\item	We take $\mathcal{A}=\R^3$ with the commutative associative product given by
	\[ e_1e_1=e_2,\; e_1e_3=e_1,\; e_2e_3=e_2,\; e_3e_3=e_3. \]
	We denote by $(a,b,c)$ the linear coordinates on $\mathcal{A}$ and $(x,y,z)$ the dual coordinates on $\mathcal{A}^*$. We have
	\[ \Phi(ae_1+be_2+ce_3,xe_1^*+ye_2^*+ze_3^*)=e^c\left(x+ay,y, ax+\frac12(a^2+2b)y+z  \right). \]The orbits have dimension 3,2,1 or 0. The three dimensional orbits are $\{y>0\}$ and $\{y<0\}$. The two dimensional orbits are $\{y=0,x>0\}$ and $\{y=0,x<0\}$. The one dimensional orbits are $\{y=x=0,z>0\}$ and $\{y=x=0,z<0\}$. The origin is the only zero dimensional orbit. Let describe the pseudo-Hessian structure on $M=\{y>0\}$ or  $M=\{y<0\}$. We have
	\[ X_{e_1}=y\partial_x+x\partial_z,\; X_{e_2}=y\partial_z\esp X_{e_3}=x\partial_x+y\partial_y+z\partial_z, \]
	and the pseudo-Hessian metric $g$ on $M$ is satisfies
	\[ g(X_{e_1},X_{e_1})=y,\;g(X_{e_1},X_{e_2})=0,\; g(X_{e_1},X_{e_3})=x,\;
	g(X_{e_2},X_{e_2})=0,\;g(X_{e_2},X_{e_3})=y\esp g(X_{e_3},X_{e_3})=z. \]
	Note that the matrix of $g$ in $(X_{e_1},X_{e_2},X_{e_3})$ is just the passage matrix $P$ from
	$(X_{e_1},X_{e_2},X_{e_3})$ to $(\partial_x,\partial_y,\partial_z)$ and hence the matrix of $g$ in $(\partial_x,\partial_y,\partial_z)$ is $P^{-1}$. Thus,
	in the coordinates $(x,y,z)$, we have
	\[ g=\frac1y\left( dx^2+\frac{x^2-yz}{y}dy_2+2dydz-\frac{2x}{y}dxdy\right).    \]
	One can check easily that $g$ is the restriction of $\na d\phi$ where $\phi(x,y,z)=z\ln|y|+\frac{x^2}{2y}$. This metric is of signature $(+,+,-)$ in $\{y>0\}$ and $(+,-,-)$ in $\{y<0\}$.
	
	\item	We take $\mathcal{A}=\R^4$ with the commutative associative product given by
	\[ e_1e_1=e_2,\; e_1e_2=e_3,\; e_1e_3= e_2e_2=e_4. \]We have $\mathcal{A}^3\not=0$ and $\mathcal{A}^4=0$.
	We denote by $(a,b,c,d)$ the linear coordinates on $\mathcal{A}$ and $(x,y,z,t)$ the dual coordinates on $\mathcal{A}^*$. We have
	\[ \Phi(ae_1+be_2+ce_3+de_4^*,xe_1^*+ye_2^*+ze_3^*+te_4^*)=(x+ay+(\frac12a^2+b)z+(\frac16a^3+ab+c)t,y+az+(\frac12a^2+b)t,z+at,t) \]and
	\[ X_{e_1}=y\partial_x+z\partial_y+t\partial_z,\; X_{e_2}=z\partial_x+t\partial_y,\; X_{e_3}=t\partial_x\esp X_{e_4}=0. \]
	Let describe the pseudo-Hessian structure of the hyperplan $M_c=\{t=c,c\not=0\}$ endowed with the coordinates $(x,y,z)$. Since the matrix of $g_c$ in $(X_{e_1},X_{e_2},X_{e_3})$ is the passage matrix $P$ from  $(X_{e_1},X_{e_2},X_{e_3})$ to $(\partial_x,\partial_y,\partial_z)$, we get
	\[ g_c=\frac1{c}\left(2dxdz+dy^2-\frac{2z}{c}dydz+\frac{(z^2-yc)}{c^2}dz^2  \right). \]
	The signature of this metric is $(+,+,-)$ if $c>0$ and $(+,-,-)$ if $c<0$. One can check easily that $g_c$ is the restriction of $\na d\phi$ to $M_c$, where
	\[ \phi(x,y,z,t)=\frac{z^4}{12t^3}+\frac{y^2}{2t}-\frac{z^2y}{2t}+\frac{xz}{t}. \]Since $\mathcal{A}^4=0$, $M_c$ is an affine special real manifold.
	\item	We take $\mathcal{A}=\R^4$ with the commutative associative product given by
	\[ e_1e_1=e_1,\; e_1e_2=e_2,\;e_1e_3=e_3,\;e_1e_4=e_4,\;e_2e_2=e_3,\; e_2e_3=e_4. \]We have
	\[ X_{e_1}=x\partial_x+y\partial_y+z\partial_z+t\partial_t,\;X_{e_2}=y\partial_x+z\partial_y+t\partial_z,\; X_{e_3}=z\partial_x+t\partial_y\esp X_{e_4}=t\partial_x. \]
	Thus $\{t>0\}$ and $\{t<0\}$ are orbits and hence carry a pseudo-Hessian structures. Let us determine the pseudo-Hessian metric. The same argument as above gives that the metric is given by the inverse of the passage matrix from $(X_{e_1},\ldots,X_{e_4})$ to $(\partial_x,\partial_y,\partial_z,\partial_t)$. Thus
	\[ g=\frac1t\left( 2dxdt+2dydz-\frac{2z}{t}dydt-\frac{z}{t}dz^2+\frac{2(z^2-yt)}{t^2}dzdt+\frac{2zyt-xt^2-z^3}{t^3}dt^2  \right). \]The signature of this metric is $(+,+,-,-)$. One can check easily that $g$ is the restriction of $\na d\phi$ to $M$, where
	\[ \phi(x,y,z,t)=-\frac{z^3}{6t^2}+\frac{yz}{t}+x\ln|t|. \]

\end{enumerate}	
\end{exem}

\bibliographystyle{elsarticle-num}

\end{document}